\newtheorem{theorem}{Theorem}[section]
\newtheorem{proposition}{Proposition}[section]
\newtheorem{definition}{Definition}
\newtheorem{lemma}{Lemma}[section]
\newtheorem{corollary}{Corollary}
\newtheorem{remark}{Remark}
\newtheorem{example}{Example}
\numberwithin{equation}{section}
\def\Ker{\mathop{\rm Ker}\nolimits}
\def\Aut{{\mathop{\rm Aut}}}
\begin{document}

\title{A theory of orbit braids}
\author {Hao Li,  Zhi L\"u$^*$ and Fengling Li$^\dag$}

 \subjclass[2010]{}
\thanks{$^*$Partially supported by the  NSFC  grants (No. 11661131004 and 11431009).}
\thanks{$^\dag$Partially supported by the  NSFC  grant (No. 11671064).}
\keywords{Orbit braid, orbit configuration space, extended fundamental group.}
\address{School of Mathematical Sciences, Fudan University, Shanghai, 200433, P. R. China. }
\email{14110840001@fudan.edu.cn}

\address{School of Mathematical Sciences, Fudan University, Shanghai, 200433, P. R. China. }
 \email{zlu@fudan.edu.cn}

\address{School of Mathematical Sciences, Dalian University of Technology, Dalian, 116024, P. R. China. }
 \email{fenglingli@dlut.edu.cn}

\begin{abstract}

This paper upbuilds the theoretical framework of orbit braids in $M\times I$ by making use of the orbit configuration space $F_G(M,n)$, which enriches the theory of ordinary braids, where  $M$ is a connected topological manifold of dimension at least 2 with an effective action of a finite group $G$ and the action of $G$ on $I$ is trivial. Main points of our work include:
\begin{itemize}
 \item[$\bullet$]  The notion of orbit braids is first given in the sense of Artin. However, the equivariant isotopy cannot be used as the equivalence relation among orbit braids in general. Based upon the nature of orbit braids, we define an equivalence relation among all orbit braids at an orbit base point, so that all equivalence classes can form a group denoted by $\mathcal{B}_n^{orb}(M,G)$, called the {\em orbit braid group}.
        We show that $\mathcal{B}_n^{orb}(M,G)$ is isomorphic to  a group  with an additional endowed operation (called the {\em extended fundamental group}),
        formed by   the homotopy classes of some paths (not necessarily closed paths)  in $F_G(M,n)$, which is an essential extension for fundamental groups. 
 \item[$\bullet$] The orbit braid group $\mathcal{B}_n^{orb}(M,G)$  is  large enough to contain the fundamental group of $F_G(M,n)$ and other various braid groups as its subgroups. Around the central position of $\mathcal{B}_n^{orb}(M,G)$, we obtain five short exact sequences weaved in a commutative diagram, one of which is
     $$1\longrightarrow \pi_1(F_G(M,n))\longrightarrow\mathcal{B}_n^{orb}(M, G)\longrightarrow G^{\times n} \rtimes \Sigma_n\longrightarrow 1$$
     but it is not induced by some known fibration except that the $G$-action is free. We also analyze the essential relations among various braid groups associated to those configuration spaces $F_G(M,n), F(M/G,n)$, and $F(M,n)$.
 \item[$\bullet$]  We finally consider how to give the presentations of orbit braid groups in terms of orbit braids as generators. We carry out our work by choosing $M=\mathbb{C}$ with  typical actions of  $\mathbb{Z}_p$ and $(\mathbb{Z}_2)^2$. We obtain the presentations of the corresponding orbit braid groups, from which we will see that the generalized braid groups  (introduced by Brieskorn)  corresponding to the Coxeter groups $B_n$ and $D_n$ can be
     the orbit braid group $\mathcal{B}_n^{orb}(\mathbb{C}\setminus\{0\}, \mathbb{Z}_2)$ and a subgroup of $\mathcal{B}_n^{orb}(\mathbb{C}, \mathbb{Z}_2)$, respectively.
\end{itemize}
In addition, the notion of  extended fundamental groups is also defined in a general way in the category of topology and some  characteristics  extracted from the discussions of orbit braids are given.
\end{abstract}
\maketitle


\section{Introduction}
Braid groups are fundamental objects in mathematics, which were first defined rigorously and studied by Artin in 1925 (\cite{A1}, also see \cite{A2}), although they already implicitly appeared in  the works of Hurwitz~\cite{H} in 1891 and Fricke--Klein~\cite{FK} in 1897, as Magnus~\cite{M} pointed out in 1974.  The subject has  continued to further develop and flourish by extending  ideas of braid groups or combining with various
ideas and theories from other research areas since then. For example, Fox and Neuwrith~\cite{FN} gave an alternative description of the classical braid groups by using the fundamental group  of (unordered) configuration spaces. Brieskorn~\cite{B1} extended the notion to Artin groups or the generalized braid groups by associating to all finite Coxeter groups.

\vskip .2cm
Compatible with various points of view, the notion of braid groups was uniformly  defined by Vershinin~\cite{VV} in a general way as follows: Choose a connected topological manifold $\mathbb{M}$ admitting an action of a finite group $\mathbb{G}$.
Let $Y_{\mathbb{G}}$ be the subspace of $\mathbb{M}$, formed by all points  of free orbit type in $\mathbb{M}$. So the action of $\mathbb{G}$ restricted to $Y_{\mathbb{G}}$ is free.   Assume that $Y_{\mathbb{G}}$ is connected. Then there is a fibration $Y_{\mathbb{G}}\longrightarrow X_{\mathbb{G}}$ with fiber $\mathbb{G}$, which gives  a short exact sequence:
$$1\longrightarrow \pi_1(Y_{\mathbb{G}})\longrightarrow \pi_1(X_{\mathbb{G}})\longrightarrow \mathbb{G}\longrightarrow 1$$
 The fundamental group $\pi_1(X_{\mathbb{G}})$ is called the {\em braid group of the action of $\mathbb{G}$ on $\mathbb{M}$}, denoted by $Br(\mathbb{M}, \mathbb{G})$, and
 the fundamental group $\pi_1(Y_{\mathbb{G}})$ is called the {\em pure braid group of the action of $\mathbb{G}$ on $\mathbb{M}$}, denoted by $P(\mathbb{M}, \mathbb{G})$.

 \vskip .2cm
 As an example of the notion above, for a connected topological manifold $M$ of dimension greater than one, take $\mathbb{M}=M^{\times n}$ (Cartesian product of $n$ copies of $M$). Then there is a natural action of the symmetric group
 $\mathbb{G}=\Sigma_n$ on $\mathbb{M}$, defined by
 $\sigma(x_1, ..., x_n)=(x_{\sigma^{-1}(1)}, ..., x_{\sigma^{-1}(n)}), \ \ \sigma\in \Sigma_n.$
 So $Y_{\Sigma_n}$ will be the ordered configuration space
$$F(M, n)=\{(x_1, ..., x_n)\in M^{\times n}| x_i\not= x_j \text{ for } i\not=j\}$$
(introduced by Fadell and Neuwrith~\cite{FN1}) and $X_{\Sigma_n}$ will be the unordered configuration space $F(M, n)/\Sigma_n$.   Thus,
the braid group $Br(\mathbb{M}, \Sigma_n)$ is the fundamental group $\pi_1(F(M,n)/\Sigma_n)$, also simply denoted by $B_n(M)$, and the pure braid group
$P(\mathbb{M}, \Sigma_n)$ is the fundamental group $\pi_1(F(M,n))$, also simply denoted by $P_n(M)$.

\vskip .2cm
  Theory of braids obviously possesses the following basic {\em theoretical features}:
   \begin{itemize}
 \item[$({\bf F_1})$] (Topological feature) Each braid group is realized as the fundamental group of the orbit space of  a geometric object with free action of a group.
 \item[$({\bf F_2})$]  (Algebraic feature)  Each braid group uniquely corresponds to a short exact sequence induced by the geometric object with free action.
   \end{itemize}

 \vskip .2cm

In this paper we shall begin with the study of {\em orbit braids} in $M\times I$ where $M$ is a connected topological manifold of dimension at least two with an effective action of a finite group $G$ but the action of $G$ on $M$ is not necessarily assumed to be free, and the action of $G$ on $I$ is  trivial. As far as authors know, the theory with respect to orbit braids has not been founded, especially in the case of non-free $G$-actions. In our case, since the restriction of free action is broken, it should not be surprising that {\em orbit braid group} defined here in general would not possess the topological feature (${\bf F_1}$), but as we shall see  it can still be identified with a group formed by  homotopy classes of some paths (not necessarily closed paths), called the {\em extended fundamental group}. Such a group is  large enough to contain the fundamental group of $F_G(M,n)$ and other various braid groups as its subgroups, implying that each orbit braid group can  correspond to many different short exact sequences rather than uniquely one short exact sequence, so it does not possess the algebraic feature (${\bf F_2}$) yet.
Therefore, these different points extend and enrich the theory of braids and the theory of fundamental groups.
\vskip .2cm
   The objective of this paper is  to establish the theoretical framework of orbit braids.
   Our strategy to do this is to mix the original idea of Artin and the theory of transformation groups together by making use of the construction of orbit configuration spaces. Specifically, we shall perform our work on the orbit configuration space $F_G(M,n)$ around the following  questions:
 \begin{itemize}
 \item[$({\bf Q_1})$] {\em How to define the orbit braid group  formed by all orbit braids?}
 \item[$({\bf Q_2})$] {\em What is the essential connection between the orbit braid group with its subgroups and the fundamental group of $F_G(M,n)$?}
   \item[$({\bf Q_3})$]  {\em How to present the orbit braid group in terms of orbit braids as generators?}
 \end{itemize}

\vskip .2cm
We first use the paths in $F_G(M,n)$ to describe the  braids in $M\times I$. This is the starting point of our work. Then, in the sense of Artin, an orbit braid will be defined as the orbit of a braid in $M\times I$ under the action of $G$  (see Definition~\ref{orbit braid}), where the orbit of a string of the  braid gives an orbit string of the  orbit braid,  but generally each orbit string will not be the disjoint union of some ordinary strings since the action of $G$ on $M$ is not assumed to be free. Like ordinary braids, there is still a natural operation on orbit braids by gluing endpoints of different orbit braids, but the operation is not associative.

\vskip .2cm
In the theory of ordinary braids, we  see that the natural operation acting on the isotopy classes of ordinary braids is associative,  so that the isotopy classes of ordinary braids can form the required braid group.
However, in the case of orbit braids, equivariant isotopy classes of orbit braids will not work very well except  that the action of $G$ on $M$ is free.

\vskip .2cm

 Based upon the nature of orbit braids, our approach  to determine the equivalence of two orbit braids is to detect  whether there exist two isotopic ordinary braids compatible with the $G$-action in two orbit braids (see Definition~\ref{D1}). This is also equivalent to saying whether there are two homotopic paths in $F_G(M,n)$  which just define those two orbit braids  (see Proposition~\ref{sn-condition}). Thus,  a key difficult point for the equivalence relation among orbit braids is overcome.  Moreover, we show that the set of the equivalence classes of all orbit braids at a fixed orbit base point $\widetilde{c({\bf x})}$, denoted by $\mathcal{B}_n^{orb}(M, G)$, bijectively corresponds to the set of the homotopy classes of those paths with fixed starting point ${\bf x}$ and ending points laid in ${\bf x}^{orb}$  in $F_G(M,n)$, denoted by $\pi_1^E(F_G(M,n), {\bf x}, {\bf x}^{orb})$,
where ${\bf x}$ is a fixed point of free orbit type in $F_G(M,n)$ (see Corollary~\ref{one-one}).
Furthermore, we can conclude that $\mathcal{B}_n^{orb}(M, G)$  forms a group, called the {\em orbit braid group}. This provides us an insight that, by endowing an additional operation, the set $\mathcal{\pi}_1^E(F_G(M,n), {\bf x}, {\bf x}^{orb})$ corresponding to $\mathcal{B}_n^{orb}(M, G)$ also forms a group, which is called the {\em extended fundamental group} of $F_G(M,n)$ at ${\bf x}^{orb}$.

\begin{theorem}[Theorem~\ref{group'}]\label{g-f-g}
The orbit braid group $\mathcal{B}_n^{orb}(M, G)$ is isomorphic to the extended fundamental group $\pi_1^E(F_G(M,n), {\bf x}, {\bf x}^{orb})$.
\end{theorem}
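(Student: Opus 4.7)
The plan is to start from the set-theoretic bijection already established in Corollary~\ref{one-one}, namely
\[
\Phi\colon\ \mathcal{B}_n^{orb}(M,G)\ \longrightarrow\ \pi_1^E(F_G(M,n),{\bf x},{\bf x}^{orb}),
\]
sending an equivalence class of orbit braids to the homotopy class of a path in $F_G(M,n)$ that represents it. Since the group structure on $\mathcal{B}_n^{orb}(M,G)$ has already been built from the gluing operation on orbit braids, the essential content of the theorem reduces to two tasks: (i) pin down the correct binary operation on $\pi_1^E(F_G(M,n),{\bf x},{\bf x}^{orb})$ — this is the ``additional operation'' alluded to in the introduction — and (ii) show that $\Phi$ intertwines it with the multiplication of orbit braids.

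For (i), I would define the operation as follows. Every representative path $\alpha\colon I\to F_G(M,n)$ starts at ${\bf x}$ and ends at some ${\bf x}_\alpha\in{\bf x}^{orb}$; because ${\bf x}$ lies in the free orbit locus, there is a unique element $g_\alpha\in G^{\times n}\rtimes\Sigma_n$ with $g_\alpha\cdot{\bf x}={\bf x}_\alpha$. Given classes $[\alpha]$ and $[\beta]$, I set
\[
[\alpha]\ast[\beta]\ :=\ [\alpha\cdot(g_\alpha\cdot\beta)],
\]
translating $\beta$ by $g_\alpha$ so that its starting point matches the endpoint of $\alpha$ and then concatenating. I would then check well-definedness on homotopy classes (a homotopy of $\alpha$ preserves $g_\alpha$ since the endpoint orbit element is a discrete invariant, and the $G^{\times n}\rtimes\Sigma_n$-action sends homotopies to homotopies), associativity, that the constant path at ${\bf x}$ is a two-sided unit, and that inverses are given by $[\alpha]^{-1}=[g_\alpha^{-1}\cdot\bar\alpha]$ with $\bar\alpha$ the reverse path.

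For (ii), I would verify that $\Phi$ is a group homomorphism by unpacking the gluing operation of orbit braids at the chosen orbit base point. Two orbit braids are stacked by matching the bottom of the second to the top of the first, and this matching forces a choice of honest point (not merely orbit) in $F_G(M,n)$; by the free-orbit hypothesis on ${\bf x}$, the unique symmetry moving ${\bf x}$ to the relevant endpoint is exactly $g_\alpha$. Thus glued orbit braids correspond under $\Phi$ to the concatenation $\alpha\cdot(g_\alpha\cdot\beta)$, i.e.\ to the operation $\ast$ defined above. Combined with Proposition~\ref{sn-condition}, which ensures equivalent orbit braids produce homotopic paths and conversely, this yields multiplicativity; bijectivity is inherited from Corollary~\ref{one-one}.

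The main obstacle I anticipate lies in step (i): verifying that the endpoint symmetry $g_\alpha$ is a genuine homotopy invariant and that the translated concatenation is associative up to homotopy. Because homotopies in $F_G(M,n)$ need not fix the endpoint pointwise but only up to the orbit, one must use the discreteness of the orbit of ${\bf x}$ (a consequence of its free orbit type together with properness of the $G$-action) to exclude any continuous drift of $g_\alpha$; once this discreteness is in hand, associativity and the inverse law reduce to standard path-homotopy arguments, carefully tracking how $G^{\times n}\rtimes\Sigma_n$ acts through each concatenation.
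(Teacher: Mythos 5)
Your proposal follows essentially the same route as the paper: the operation you define on $\pi_1^E(F_G(M,n),{\bf x},{\bf x}^{orb})$ is exactly the paper's $[\alpha]\bullet[\beta]=[\alpha\circ(g\beta_\sigma)]$ with $(g,\sigma)$ the unique pair of Lemma~\ref{unique}, and the isomorphism is the bijection of Corollary~\ref{one-one} checked to be multiplicative via the gluing construction. Your worry about ``drift'' of $g_\alpha$ is moot since the classes are taken up to homotopy \emph{relative to} $\partial I$, so endpoints are literally fixed and $g_\alpha$ is automatically invariant.
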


Of course, this point of view can also be used in the theory of ordinary braids. Actually, when $G=\{e\}$ is trivial, $\mathcal{B}_n^{orb}(M, G)$ degenerates into the ordinary braid group $B_n(M)$, meanwhile $\pi_1^E(F_G(M,n), {\bf x}, {\bf x}^{orb})$ is exactly isomorphic to the fundamental group of $F(M,n)/\Sigma_n$.
\vskip .2cm

The orbit braid group $\mathcal{B}_n^{orb}(M, G)$ contains some interesting subgroups $\mathcal{P}_n^{orb}(M,G)$, $\mathcal{B}_n(M,G)$ and $\mathcal{P}_n(M,G)$ (see Defintion~\ref{subgroup-def}), where  $\mathcal{P}_n(M,G)$ is exactly isomorphic to the fundamental group of $F_G(M,n)$. So the fundamental group of $F_G(M,n)$ can be regarded as a subgroup of $\mathcal{B}_n^{orb}(M, G)$. Of course, it also is a subgroup of the extended fundamental group $\pi_1^E(F_G(M,n), {\bf x}, {\bf x}^{orb})$.  On the other hand, each class of $\mathcal{B}_n^{orb}(M, G)$ determines a unique pair $(g, \sigma)\in G^{\times n}\times \Sigma_n$. This leads us to obtain  an epimorphism
 $$\Phi: \mathcal{B}_n^{orb}(M, G)\longrightarrow G^{\times n} \rtimes \Sigma_n,$$
so that we can further analyze the relations among  those subgroups of $\mathcal{B}_n^{orb}(M, G)$. Our  result is then stated as follows.
 \begin{theorem}[Theorem~\ref{main result}] \label{main}
 There are five short exact sequences
around $\mathcal{B}_n^{orb}(M, G)$, which form the following commutative diagram:
$$
\xymatrix{
1 \ar[dr] & & & & 1\\
 & \mathcal{P}_n^{orb}(M,G)    \ar[rr]  \ar[dr] &   & G^{\times n} \ar[ur]  &    \\
1 \ar[r] & \mathcal{P}_n(M,G) \ar[u] \ar[d] \ar[r]  & \mathcal{B}_n^{orb}(M,G) \ar[ur] \ar[r]^{\Phi} \ar[dr] & G^{\times n}\rtimes\Sigma_n
\ar[u] \ar[r] \ar[d] & 1 \\
& \mathcal{B}_n(M,G) \ar[rr]  \ar[ur] & & \Sigma_n \ar[dr] &   \\
1 \ar[ur] &&&& 1
}
$$
 \end{theorem}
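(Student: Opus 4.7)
The plan is to build all five short exact sequences around the central surjection $\Phi$. First, I would carefully define $\Phi: \mathcal{B}_n^{orb}(M,G) \to G^{\times n} \rtimes \Sigma_n$ by exploiting Theorem~\ref{g-f-g}: a class in $\mathcal{B}_n^{orb}(M,G)$ is represented by a homotopy class of paths $\gamma$ in $F_G(M,n)$ starting at ${\bf x}$ and ending at a point of the orbit ${\bf x}^{orb}$. Since $G^{\times n}\rtimes\Sigma_n$ acts freely and transitively on ${\bf x}^{orb}$ (the stabiliser of ${\bf x}$ being trivial because the component orbits are pairwise disjoint), the endpoint $\gamma(1)=(g_1 x_{\sigma(1)},\ldots,g_n x_{\sigma(n)})$ determines a unique pair $((g_i),\sigma)$, and one sets $\Phi([\gamma])=((g_i),\sigma)$. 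Well-definedness is automatic: an extended-homotopy can only move $\gamma(1)$ inside the discrete set ${\bf x}^{orb}$, so $((g_i),\sigma)$ is invariant on each class.

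Second, I would verify that $\Phi$ is a group homomorphism, which I expect to be the main technical step. In the extended-fundamental-group picture, the product $[\alpha]\cdot[\beta]$ is formed by first translating $\beta$ by the element $(g,\sigma)$ that moves ${\bf x}$ to $\alpha(1)$, and then concatenating $\alpha$ with $(g,\sigma)\cdot\beta$. If $\Phi([\alpha])=((g_i),\sigma)$ and $\Phi([\beta])=((h_j),\tau)$, tracking the $i$-th component shows that the concatenation ends at $(g,\sigma)(h,\tau)\cdot{\bf x}$, so that
$$\Phi([\alpha]\cdot[\beta]) = \bigl((g_i\,h_{\sigma(i)})_i,\; \sigma\tau\bigr),$$
which is precisely the semidirect-product multiplication. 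Surjectivity is easy: path-connectedness of the free locus in $F_G(M,n)$ allows one to exhibit, for any target $((g_i),\sigma)$, an explicit orbit braid realising it.

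Third, I would identify $\ker\Phi$ with $\mathcal{P}_n(M,G)$ — the classes of loops at ${\bf x}$, which under Theorem~\ref{g-f-g} are exactly the elements of $\pi_1(F_G(M,n))$. This yields the central horizontal sequence
$$1 \longrightarrow \mathcal{P}_n(M,G) \longrightarrow \mathcal{B}_n^{orb}(M,G) \xrightarrow{\;\Phi\;} G^{\times n} \rtimes \Sigma_n \longrightarrow 1.$$
The right-hand vertical extension $1\to G^{\times n}\to G^{\times n}\rtimes \Sigma_n\to \Sigma_n\to 1$ is standard. Pulling its kernel and quotient back along $\Phi$ defines the preimage subgroups $\mathcal{P}_n^{orb}(M,G):=\Phi^{-1}(G^{\times n})$ and $\mathcal{B}_n(M,G):=\Phi^{-1}(\Sigma_n)$, and restricting $\Phi$ to them produces the remaining two short exact sequences
$1\to \mathcal{P}_n(M,G)\to \mathcal{P}_n^{orb}(M,G)\to G^{\times n}\to 1$ and $1\to \mathcal{P}_n(M,G)\to \mathcal{B}_n(M,G)\to \Sigma_n\to 1$, which correspond to the top and bottom horizontal arrows of the diagram. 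The four connecting inclusions $\mathcal{P}_n\hookrightarrow \mathcal{P}_n^{orb}$, $\mathcal{P}_n\hookrightarrow \mathcal{B}_n$, $\mathcal{P}_n^{orb}\hookrightarrow \mathcal{B}_n^{orb}$, $\mathcal{B}_n\hookrightarrow \mathcal{B}_n^{orb}$ are all the obvious set-theoretic ones, and the two diagonal arrows into the corners come from compositions with $\Phi$.

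Commutativity of the diagram is then square-by-square mechanical, since every composite around a square is a restriction of $\Phi$ followed by one of the canonical projections on $G^{\times n}\rtimes\Sigma_n$. The chief obstacle, as noted, is getting the homomorphism property of $\Phi$ correct: one must verify that the $G^{\times n}\rtimes\Sigma_n$-translate used to splice $\beta$ onto the end of $\alpha$ produces a path whose extended-homotopy class is independent of the chosen representatives, and that the resulting $((g_i h_{\sigma(i)}), \sigma\tau)$ matches the semidirect-product rule. Once this is established, the rest of Theorem~\ref{main} is a routine diagram chase from the central sequence.
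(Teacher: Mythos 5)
Your proposal follows the paper's own argument essentially step for step: $\Phi$ is defined from the endpoint data $(g,\sigma)$ of a normalized representative (the paper's Lemma~\ref{unique}), the semidirect-product rule $\Phi([\alpha]*[\beta])=(gh_\sigma,\sigma\tau)$ is exactly the paper's computation of the composed endpoint $g(h{\bf x}_\tau)_\sigma=gh_\sigma{\bf x}_{\sigma\tau}$ (Remark~\ref{pair operation}), surjectivity comes from path-connectedness of $F_G(M,n)$ (Lemma~\ref{epi}), and the remaining sequences arise by restricting $\Phi$ to the endpoint-defined subgroups and composing with the projections $p_G$, $p_\Sigma$. The only point you gloss over --- and which the paper flags in a remark immediately after the theorem --- is that $p_G\colon(g,\sigma)\mapsto g$ is \emph{not} a group homomorphism (since $p_G((g,\sigma)\cdot(h,\tau))=gh_\sigma\neq gh$ in general), so the sequence $1\to\mathcal{B}_n(M,G)\to\mathcal{B}_n^{orb}(M,G)\to G^{\times n}\to 1$ is exact only in the sense of pointed sets rather than of group homomorphisms; your ``square-by-square mechanical'' verification should acknowledge this caveat, but it does not otherwise affect the argument.
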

However, the natural action of $ G^{\times n} \rtimes \Sigma_n$ on $F_G(M,n)$ is non-free except that the action of $G$ on $M$ is free, so generally $\mathcal{B}_n^{orb}(M, G)$ is not realizable as the fundamental group of the orbit space $F_G(M,n)/G^{\times n} \rtimes \Sigma_n$. This also means that generally the short exact sequence
\begin{equation}\label{seq}
1 \longrightarrow \mathcal{P}_n(M,G) \longrightarrow \mathcal{B}_n^{orb}(M,G) \longrightarrow G^{\times n}\rtimes\Sigma_n \longrightarrow 1
\end{equation}
is not induced from some known fibration. Therefore, this is completely different from the theory of ordinary braids.
Only when the action of $G$ on $M$ is free,
we shall see from Corollary~\ref{free-1} that $\mathcal{B}_n^{orb}(M, G)\cong Br(M^{\times n}, G^{\times n} \rtimes \Sigma_n)$
and $\mathcal{P}_n^{orb}(M, G)\cong Br(M^{\times n}, G^{\times n})$ in the sense of Vershinin.
In addition, Corollary~\ref{free-2} will also tell us that if the action of $G$ on $M$ is free, then $\mathcal{B}_n^{orb}(M,G)\cong B_n(M/G)$ so $\mathcal{B}_n^{orb}(M,G)$ is realizable as the fundamental group of $F(M/G,n)/\Sigma_n$, and $\mathcal{P}_n^{orb}(M,G)\cong P_n(M/G)$ so $\mathcal{P}_n^{orb}(M,G)$ is realizable as the fundamental group of $F(M/G,n)$.

\vskip .2cm
   When the action of $G$ on $M$ is non-free, unfortunately we have not found a geometric model whose fundamental group can be realized as $\mathcal{B}_n^{orb}(M, G)$ or $\mathcal{P}_n^{orb}(M, G)$. Although so, we would like to propose the following problem.
    \vskip .2cm  \noindent $({\bf P})$ (Realization problem) {\em Is there a fibration that induces the short exact sequence in (\ref{seq})? }
  \vskip .2cm
We also prove that the embedding $F_G(M,n)\hookrightarrow F(M,n)$ can induce an epimorphism between two short exact sequences produced by two fibrations
$F_G(M,n)\longrightarrow F_G(M,n)/\Sigma_n$ and $F(M,n)\longrightarrow F(M,n)/\Sigma_n$ (Proposition~\ref{orbit-ordinary}).

\begin{remark}
The geometric intuition of orbit braids plays an important role in the study of problem; meanwhile  this also lets us deepen the understanding on the extended fundamental group via the study of orbit braid groups. So, as an extension of fundamental groups, the notion of  extended fundamental groups should be defined in a general way in the category of topology. We shall do this in Section~\ref{extended fg} and  some  characteristics  extracted from the discussions of orbit braids will be given therein.
  It seems  be possible that the study of  extended fundamental groups should be able to  go on  in its own way from the viewpoints of topology and algebra, but this will be a bit far from the topic of this paper. Thus  we remain this in our subsequent
work.
\end{remark}

\begin{remark}
It should be pointed out that the orbit braid groups or the extended fundamental groups do depend upon the choices of (orbit) base points, as we can see this from Remark~\ref{base point} and Theorems~\ref{efg-main}--\ref{efg-main2}  in Section~\ref{extended fg}. The reason why we  choose the point of free orbit type as the base point  in this paper is because this does not influence on revealing our idea and opinion except for more clear expression.
\end{remark}
With respect to (${\bf Q_3})$, we consider the calculation of orbit braid groups, which is also the calculation of the extended fundamental groups.
As it was well-known, Artin began with the calculation of braid groups by considering braids on $\mathbb{R}^2\times I$.
Thus we will start our work from the cases of $\mathbb{C} \approx \mathbb{R}^2$ with the following two typical actions.

\vskip .2cm
The first one is
$\mathbb{Z}_p\curvearrowright^{\phi_1}\mathbb{C}$
defined by $(e^{{2k\pi{\bf i}}\over p}, z)\longmapsto e^{{2k\pi{\bf i}}\over p}z$, which is non-free and fixes only the origin of $\mathbb{C}$, where $p$ is a prime, and
$\mathbb{Z}_p$ is regarded as the subgroup $\{e^{{2k\pi{\bf i}}\over p}|0\leq k<p\}$. If the action $\phi_1$ is restricted to $\mathbb{C}^{\times}=\mathbb{C}\setminus\{0\}$, then the action $\mathbb{Z}_p\curvearrowright^{\phi_1}\mathbb{C}^{\times}$ is free.
The other one is $(\mathbb{Z}_2)^2\curvearrowright^{\phi_2}\mathbb{C}$ defined by
$$\begin{cases}
z\longmapsto \overline{z}\\
z\longmapsto -\overline{z}
\end{cases}
$$
which is the standard representation of $(\mathbb{Z}_2)^2$ on $\mathbb{C}\approx \mathbb{R}^2$, and this action is non-free.

\vskip .2cm

 We obtain the presentations of three orbit braid groups $\mathcal{B}_n^{orb}(\mathbb{C}, \mathbb{Z}_p)$, $\mathcal{B}_n^{orb}(\mathbb{C}^{\times}, \mathbb{Z}_p)$ and $\mathcal{B}_n^{orb}(\mathbb{C}, (\mathbb{Z}_2)^2)$.

 \vskip .2cm
 For $p=2$, since $(\mathbb{Z}_2)^n\rtimes \Sigma_n$ is isomorphic to the finite Coxeter group $B_n$ and the action $\mathbb{Z}_2\curvearrowright^{\phi_1}\mathbb{C}^{\times}$ is free, $\mathcal{B}_n^{orb}(\mathbb{C}^{\times}, \mathbb{Z}_2)$
 is exactly isomorphic to the generalized braid group $Br(B_n)$ defined by Brieskorn.
This means that in this case, the orbit braid group agrees with the generalized braid group. In addition, we will see that
the generalized braid group $Br(D_n)$ is isomorphic to a subgroup of $\mathcal{B}_n^{orb}(\mathbb{C}, \mathbb{Z}_2)$.

 \vskip .2cm
  It should be pointed out that although the group $G$ is assumed to be finite,  many aspects of our work do not need this restriction. This can be seen in Section~\ref{extended fg}.

  \vskip .2cm

  The paper is organized as follows. Section~\ref{theory} is the main part of this paper, where we will discuss how to upbuild the theoretical framework of orbit braids.
  We will begin with the basic notion of orbit braids and the definition of the equivalence relation among orbit braids. Then we give the definitions of the orbit braid group and the extended fundamental group, and show the equivalence of such two kinds of groups (i.e., Theorem~\ref{g-f-g}). Furthermore, we introduce some subgroups of orbit braid group and study various possible relations among orbit braid group and its subgroups. This leads to the proof of
  Theorem~\ref{main}. In Section~\ref{calculation} we calculate the  orbit braid groups in $\mathbb{C}\times I$ with two typical actions on $\mathbb{C}$, from which we see that  the generalized braid group $Br(B_n)$ actually agrees with the orbit braid group $\mathcal{B}_n^{orb}(\mathbb{C}^{\times}, \mathbb{Z}_p)$ and $Br(D_n)$ is a subgroup of the orbit braid group $\mathcal{B}_n^{orb}(\mathbb{C}, \mathbb{Z}_p)$. In Section~\ref{extended fg} we define  the notion of  extended fundamental groups  in a general way in the category of topology, and give their some  characteristics  extracted from the discussions of orbit braids.
  Finally we review the generalized braid groups introduced by Brieskorn in Appendix~\ref{A}.

\section{Theory of orbit braids} \label{theory}

Given a topological group $G$ and a topological space $X$. Assume that $X$ admits an effective $G$-action. Then the orbit configuration space of the $G$-space $X$ is
defined by
$$
F_G(X,n)=\{(x_1,\ldots,x_n)\in X^{\times n} \ | \ G(x_i) \cap G(x_j)=\emptyset \textrm{ for } i\not=j\}
$$
with subspace topology, where $n\geq 2$ and $G(x)$ denotes the orbit of $x$. In the case where  $G$
acts trivially on $X$ or $G=\{e\}$, the space $F_G(X,n)$ is the classical
configuration space  $F(X,n)$.

\vskip .2cm
The action of $G$ on $X$ induces a natural action of $G^{\times n}$ on $F_G(X,n)$. In addition,
$F_G(X,n)$ also admits a canonical free action of the symmetric group $\Sigma_n$ on $F_G(X,n)$. However, generally  these two actions are not commutative.

\begin{remark}
The notion of orbit configuration space was introduced by Xicot\'encatl~\cite{X} in the thesis of his Ph.D. Since then,
the subject, with respect to the algebraic topology (especially cohomology) and relative topics of orbit configuration spaces,
has  been further developed.
\vskip .2cm
This equivariant case is quite different from the classical case. In particular, if the action of $G$ on $X$ is non-free, then the singular points (i.e., points of non-free orbit type) in $X$ will bring  difficulty to the study of problem. An effective approach to deal with this difficulty is to throw out all singular points from $X$ so as to further study (see, e.g., ~\cite{BG}). Another approach is to choose nice behaved equivariant manifolds. For example, in~\cite{CLW},  making use of two kinds of equivariant manifolds with non-free actions introduced by Davis and  Januszkiewicz~\cite{DJ} avoids  removing
 all singular points of orbit configuration spaces  since  the combinatorial structures of  the orbit spaces of the equivariant manifolds can determine all singular points, so that  an explicit formula of Euler characteristic for orbit
configuration spaces can be obtained in terms of combinatorics.
\end{remark}

\vskip .2cm
In the following, we shall  pay more attention on the case in which $X$ is a connected topological manifold $M$ of dimension greater than one, and $G$ is a finite  group. In this case $F_G(M,n)$ is connected. Here we shall focus on the study of orbit braids in $M\times I$ by associating to $F_G(M,n)$. We will see that our work does not only enrich the theory of braids, but also leads to a new understanding of how to use  paths. Actually, whichever paths are closed or unclosed,  by associating with the group action we can always form various kinds of  groups.  This extends the notion of fundamental groups.

\vskip .2cm
\subsection{Notions and properties of orbit braids}

A path $$\alpha=(\alpha_1, ..., \alpha_n): I\longrightarrow F_G(M,n)$$ uniquely determines a configuration $c(\alpha)=\{c(\alpha_1), ..., c(\alpha_n)\}$ of $n$ strings in $M\times I$, where $I=[0,1]$ admits a trivial action of $G$  and each string $c(\alpha_i)=\{(\alpha_i(s), s)|s\in I\}$ is homeomorphic to $I$. For each $s\in I$, since $\alpha(s)=(\alpha_1(s), ..., \alpha_n(s))\in F_G(M,n)$, it follows that the intersection of any two different $c(\alpha_i)$ and $c(\alpha_j)$ is empty, so we may write
$c(\alpha)=\coprod_{i=1}^nc(\alpha_i)$, which is naturally an unordered disjoint union of $n$ intervals.
Furthermore, it is easy to see that $c(\alpha)$ can  determine $n!$ paths $\alpha_{\sigma}=(\alpha_{\sigma(1)}, ..., \alpha_{\sigma(n)}), \sigma\in\Sigma_n$ in $F_G(M,n)$ such that $c(\alpha_{\sigma})=c(\alpha)$.

\vskip .2cm
 For the path $\alpha$ satisfying that $\alpha(0)=(x_1, ..., x_n)$ and $\alpha(1)=(x_{\sigma(1)}, ..., x_{\sigma(n)})$ for some $\sigma\in \Sigma_n$, if we forget the action of $G$ on $M$, then $c(\alpha)$ becomes a braid in the sense of Artin. Otherwise, $c(\alpha)$ would be different from the classical one. For  instance, see the following examples.

 \begin{example}\label{ex1}
 Consider the orbit configuration space $F_{\mathbb{Z}_2}(\mathbb{C},n)$ where the action of $\mathbb{Z}_2$ on $\mathbb{C}$ is given by $z\longmapsto -z$, so this action is non-free and  fixes only the origin  of $\mathbb{C}$. In the case of $n=2$, let us see two closed paths $\alpha, \beta: I\longrightarrow F_{\mathbb{Z}_2}(\mathbb{C},2)$ at the point ${\bf x}=(1,2)$ such that their corresponding  braids $c(\alpha)$ and $c(\beta)$ are as shown below:

\begin{tikzpicture}
\draw [line width=0.04cm] (-7,0)--(-2,0);
\draw [line width=0.04cm] (2,0)--(7,0);
\node[right] at (-2,0) {$t=0$};
\node[right] at (7,0) {$t=0$};
\draw [line width=0.04cm] (-7,-2)--(-2,-2);
\draw [line width=0.04cm] (2,-2)--(7,-2);
\node[right] at (7,-2) {$t=1$};
\node[right] at (-2,-2) {$t=1$};
\draw [dashed] (-4.5,1)--(-4.5,-3);
\node
[above] at (-4.5,1) {$O=\{0\}\times I$};
\draw [dashed] (4.5,1)--(4.5,-3);
\node
[above] at (4.5,1) {$O=\{0\}\times I$};

\draw (-4,0)--(-6.5,-1);
\draw (-6.5,-1)--(-4,-2);
\node[above] at (-4,0) {$1$};

\draw (-3,0)--(-3,-2);
\node[above] at (-3,0) {$2$};
\node
[below] at (-4.5,-3) {$c(\alpha)$};

\draw (5,0)--(5,-2);
\node[above] at (5,0) {$1$};
\node[above] at (-5,0) {$-1$};
\draw (6,0)--(6,-2);
\node[above] at (6,0) {$2$};

\node[above] at (-6,0) {$-2$};
\node
[below] at (4.5,-3) {$c(\beta)$};
\end{tikzpicture}

\noindent If we forget the action of  $\mathbb{Z}_2$ on $\mathbb{C}$, then clearly $c(\alpha)$ and $c(\beta)$ are isotopic relative to endpoints in $\mathbb{C}\times I$. However, under the condition that $\mathbb{C}$ admits the action of  $\mathbb{Z}_2$, both $c(\alpha)$ and $c(\beta)$ are not isotopic since the first string of $c(\alpha)$ cannot go through the orbit of the second string of $c(\alpha)$, as we can see from the following left picture.

\begin{tikzpicture}
\draw [line width=0.04cm] (-7,0)--(-2,0);
\draw [line width=0.04cm] (2,0)--(7,0);
\node[right] at (-2,0) {$t=0$};
\node[right] at (7,0) {$t=0$};
\draw [line width=0.04cm] (-7,-2)--(-2,-2);
\draw [line width=0.04cm] (2,-2)--(7,-2);
\node[right] at (7,-2) {$t=1$};
\node[right] at (-2,-2) {$t=1$};
\draw [dashed] (-4.5,1)--(-4.5,-3);
\node
[above] at (-4.5,1) {$O$};
\draw [dashed] (4.5,1)--(4.5,-3);
\node
[above] at (4.5,1) {$O$};

\draw (-4,0)--(-6.5,-1);
\draw (-6.5,-1)--(-6.1,-1.16);\draw (-5.9,-1.24)--(-4,-2);
\node[above] at (-4,0) {$1$};

\draw (-3,0)--(-3, -1.1); \draw(-3, -1.3)--(-3,-2);
\node[above] at (-3,0) {$2$};

\draw (-5,0)--(-3.1, -0.76); \draw(-2.9, -0.84)--(-2.5,-1);
\draw (-2.5,-1)--(-5,-2);
\node[above] at (-5,0) {$-1$};

\draw (-6,0)--(-6, -0.7); \draw(-6, -0.9)--(-6,-2);
\node[above] at (-6,0) {$-2$};
\node [below] at (-4.5,-3) {$c(\alpha)$};

\draw (5,0)--(5,-2);
\node[above] at (5,0) {$1$};

\draw (6,0)--(6,-2);
\node[above] at (6,0) {$2$};

\draw (4,0)--(4,-2);
\node[above] at (4,0) {$-1$};

\draw (3,0)--(3,-2);
\node[above] at (3,0) {$-2$};
\node [below] at (4.5,-3) {$c(\beta)$};
\end{tikzpicture}
 \end{example}

\begin{remark} \label{h-i equiv}
In the theory of classical braids (cf. \cite{A2, Bir}), it is easy to see that
for  two paths $\alpha, \beta: I\longrightarrow F(M,n)$ with the same endpoints, $\alpha$ and  $\beta$ are homotopic relative to $\partial I$ (also write $\alpha\simeq\beta$ rel $\partial I$) if and only if  $c(\alpha)$ and $c(\beta)$ are isotopic relative to endpoints
  in $M\times I$, where $M$ is not equipped with any group action\footnote{Here the equivalence of $c(\alpha)$ and $c(\beta)$ up to isotopy is compatible with the Definition 3 of Artin's paper~\cite{A2} since $c(\alpha)$ and $c(\beta)$ are given by two paths in $F(M,n)$.}.
\end{remark}

\vskip .2cm
Since we are working  in the case of $M$ with an effective $G$-action, naturally we wish to know whether  the equivalence of homotopy and isotopy in Remark~\ref{h-i equiv} still holds in our case.

\begin{definition}\label{isotopy}
   Let $\alpha, \beta: I\longrightarrow F_G(M,n)$ be two paths with the same endpoints. We say that $c(\alpha)$ and $c(\beta)$ are {\bf isotopic with respect to the $G$-action} relative to endpoints in $M\times I$, denoted by
   $c(\alpha)\sim_{iso}^G c(\beta)$,  if
there exist $n$ homotopy maps $ \widehat{F}_i: I\times I \longrightarrow M\times I$ given by $\widehat{F}_i(s,t)=(F_i(s,t),s)$, $i=1, ..., n$, such that
\begin{itemize}
 \item[$(1)$] $\coprod_{i=1}^n \widehat{F}_i(s,0)=c(\alpha)$ and  $\coprod_{i=1}^n\widehat{F}_i(s,1)=c(\beta)$;
 \item[$(2)$] $\coprod_{i=1}^n \widehat{F}_i(0,t)=c(\alpha)|_{s=0}=c(\beta)|_{s=0}$ and
   $\coprod_{i=1}^n \widehat{F}_i(1,t)=c(\alpha)|_{s=1}=c(\beta)|_{s=1}$;
 \item[$(3)$] For any $(s, t)\in I\times I$, if $i\neq j$ then $G(F_i(s,t))\cap G(F_j(s,t))=\emptyset$.
 \end{itemize}
\end{definition}

With this understanding, we have the following result.
\begin{proposition}\label{first ns}
Let $\alpha, \beta: I\longrightarrow F_G(M,n)$ be two paths with the same endpoints. Then
$\alpha\simeq\beta$ rel $\partial I$ if and only if $c(\alpha)\sim_{iso}^G c(\beta)$.
\end{proposition}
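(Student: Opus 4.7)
The statement is essentially a dictionary between the parametric homotopy picture (paths in $F_G(M,n)$) and the geometric isotopy picture (braids in $M\times I$), and my plan is to prove both directions by direct construction, with the only nontrivial point being the book-keeping of string labels in the reverse direction.

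\medskip
For the forward implication, given a homotopy $H=(H_1,\dots,H_n):I\times I\to F_G(M,n)$ rel $\partial I$ with $H(\,\cdot\,,0)=\alpha$ and $H(\,\cdot\,,1)=\beta$, I would set $F_i(s,t):=H_i(s,t)$ and $\widehat F_i(s,t):=(F_i(s,t),s)$. Condition (1) of Definition~\ref{isotopy} is then automatic. Condition (2) would follow from $H$ being a homotopy rel $\partial I$, which forces $H_i(0,t)=\alpha_i(0)$ and $H_i(1,t)=\alpha_i(1)$ independently of $t$. Condition (3) is precisely the membership $H(s,t)\in F_G(M,n)$, which says that the orbits $G(H_i(s,t))$ are pairwise disjoint.

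\medskip
For the reverse implication, starting from an isotopy $\widehat F_1,\dots,\widehat F_n$ with respect to the $G$-action, the natural candidate homotopy is $H(s,t):=(F_1(s,t),\dots,F_n(s,t))$, which lands in $F_G(M,n)$ by condition (3). Before this can legitimately be called a homotopy from $\alpha$ to $\beta$, I would have to align the $F_i$ with the ordered tuple $\alpha$. Condition (2) forces the set $\{F_i(0,t)\}_i$ to equal the fixed set $\{\alpha_j(0)\}_j$ for every $t$; since $t\mapsto F_i(0,t)$ is continuous with values in a discrete set, it is constant, equal to $\alpha_{\pi(i)}(0)$ for some permutation $\pi$. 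After relabeling $F_i\mapsto F_{\pi^{-1}(i)}$, I would then argue that the continuous path $s\mapsto F_i(s,0)$ lies entirely in the string $c(\alpha_i)$: the point $(F_i(s,0),s)$ belongs to $c(\alpha)=\coprod_j c(\alpha_j)$ by condition (1); the $c(\alpha_j)$ are pairwise disjoint (their heights agree with distinct orbits) and each is the compact image of $I$, so the connected path cannot cross between components. Hence $F_i(\,\cdot\,,0)=\alpha_i$. The same argument at $t=1$ gives $F_i(\,\cdot\,,1)=\beta_{\rho(i)}$ for some permutation $\rho$, and evaluating the identity $F_i(0,t)\equiv\alpha_i(0)$ at $t=1$ yields $\beta_{\rho(i)}(0)=\alpha_i(0)=\beta_i(0)$, whence $\rho=\mathrm{id}$ by distinctness of the $\beta_j(0)$. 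Condition (2) also gives $H(0,t)\equiv\alpha(0)$ and $H(1,t)\equiv\alpha(1)$, so $H$ is a homotopy rel $\partial I$.

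\medskip
The step I expect to require real care is this labeling argument: the isotopy definition only remembers the strings as an unordered disjoint union, while the homotopy viewpoint demands an ordered $n$-tuple of coordinate paths. The discreteness-plus-continuity trick is the natural way to extract an ordering, but one must use the defining property of $F_G(M,n)$ (disjoint, hence distinct, coordinates) in order to conclude that the candidate permutations at $t=0$ and $t=1$ coincide. Everything else is a routine unpacking of the two definitions.
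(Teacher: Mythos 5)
Your proof is correct and follows essentially the same route as the paper's: both directions are the direct translation between a homotopy $F=(F_1,\dots,F_n)$ in $F_G(M,n)$ and the maps $\widehat F_i(s,t)=(F_i(s,t),s)$. The only difference is that in the reverse direction you carefully justify, via the discreteness-plus-connectedness argument, that the unordered data of Definition~\ref{isotopy} can be relabeled so that $F_i(\cdot,0)=\alpha_i$ and $F_i(\cdot,1)=\beta_i$ with matching indices --- a (routine but real) verification that the paper's one-line converse leaves implicit, so your write-up is if anything more complete.
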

\begin{proof}
Assume that $F=(F_1, ..., F_n): I\times I\longrightarrow F_G(M,n)$
is a homotopy relative to $\partial I$ from
 $\alpha$  to $\beta$. Then we can use $F$ to define $n$ homotopy maps
 $$\widehat{F}_i: I\times I\longrightarrow M\times I$$
 by $\widehat{F}_i(s,t)=(F_i(s,t),s)$, $i=1, ..., n$, satisfying (1)--(3) of Definition~\ref{isotopy}.
 Thus, $c(\alpha)\sim_{iso}^G c(\beta)$.

 \vskip .1cm

Conversely, suppose that $c(\alpha)\sim_{iso}^G c(\beta)$. Then  there are $n$ homotopy maps
 $$\widehat{F}_i: I\times I\longrightarrow M\times I$$
 by $\widehat{F}_i(s,t)=(F_i(s,t),s)$, $i=1, ..., n$, satisfying (1)--(3) of Definition~\ref{isotopy}.
Furthermore, these $F_i$ give a map $F=(F_1, ..., F_n):  I\times I\longrightarrow F_G(M,n)$,
  which is just the homotopy relative to $\partial I$ from $\alpha$ to $\beta$.
\end{proof}

For a path $\alpha=(\alpha_1, ..., \alpha_n): I\longrightarrow F_G(M,n)$, since $M$ admits an action of $G$, we may define the orbit of $\alpha$ as follows:
$$G^{\times n}(\alpha)=\{g\alpha=(g_1\alpha_1, ..., g_n\alpha_n)| g=(g_1, ..., g_n)\in G^{\times n}\},$$  a collection of $|G|^n$ paths in $F_G(M,n)$.    Then the corresponding configuration $c(\alpha)=\{c(\alpha_1), ..., c(\alpha_n)\}$ in $M\times I$ with trivial action of $G$ on $I$ gives its orbit configuration
$$\widetilde{c(\alpha)}=\{\widetilde{c(\alpha_1)}, ..., \widetilde{c(\alpha_n)}\}$$ where each orbit string $\widetilde{c(\alpha_i)}=\{hc(\alpha_i)|h\in G\}$ is the orbit of the string $c(\alpha_i)$ under the action of $G$, consisting of $|G|$ strings.  We note that the $|G|$ strings in each orbit string $\widetilde{c(\alpha_i)}$ may intersect to each other, but the intersection of any two different orbit strings $\widetilde{c(\alpha_i)}$ and  $\widetilde{c(\alpha_j)}$ must be empty since
$g\alpha(s)=(g_1\alpha_1(s), ..., g_n\alpha_n(s))\in F_G(M,n)$ for any $g=(g_1, ..., g_n)\in G^{\times n}$. Furthermore,
$\widetilde{c(\alpha)}$ can be written as $\coprod_{i=1}^n\widetilde{c(\alpha_i)}$, an unordered disjoint union,
 and it can  determine $|G|^nn!$ paths $g\alpha_{\sigma}=(g_1\alpha_{\sigma(1)}, ..., g_n\alpha_{\sigma(n)}), (g,\sigma)\in G^{\times n}\times\Sigma_n$, in $F_G(M,n)$ such that $\widetilde{c(g\alpha_{\sigma})}=\widetilde{c(\alpha)}$.

\vskip .2cm
We note that since the action of $G$ on $I$ is trivial, for each $1\leq i\leq n$, $hc(\alpha_i)=c(h\alpha_i), h\in G$. Thus,
for any $g\in G^{\times n}$, $\widetilde{c(g\alpha)}=\widetilde{c(\alpha)}$. Also, for two paths $\alpha$ and $\alpha'$, if
$\widetilde{c(\alpha)}=\widetilde{c(\alpha')}$, then there must be $g\in G^{\times n}$ and $\sigma\in \Sigma_n$ such that
$\alpha=g\alpha'_{\sigma}$.

\vskip .2cm

Now we are going to give the definition of orbit braids.
Choose a point ${\bf x}=(x_1, ..., x_n)$ in $F_G(M,n)$ such that for each $1\leq i\leq n$, the orbit $G(x_i)$ is of free type.
{\em Throughout the following, fix this point ${\bf x}$ as a base point}.

\vskip .2cm
Given a $\sigma\in \Sigma_n$, by ${\bf x}_{\sigma}$ we denote $(x_{\sigma(1)}, ..., x_{\sigma(n)})$. So, for $g=(g_1, ..., g_n)\in
G^{\times n}$, $g{\bf x}_{\sigma}=(g_1x_{\sigma(1)}, ..., g_nx_{\sigma(n)})$.

\begin{definition}\label{orbit braid}
 Let $\alpha=(\alpha_1, ..., \alpha_n): I\longrightarrow F_G(M,n)$ be a path such that $\alpha(0)={\bf x}$ and
$\alpha(1)=g{\bf x}_{\sigma}$ for some $(g, \sigma)\in G^{\times n}\times \Sigma_n$.
Then  $\widetilde{c(\alpha)}$ is called an {\bf orbit braid} in $M\times I$ .
\end{definition}

Obviously, each orbit braid  $\widetilde{c(\alpha)}$ has the property that $\widetilde{c(\alpha)}|_{s=0}$ and $\widetilde{c(\alpha)}|_{s=1}$ are homeomorphic to
$$\widetilde{c({\bf x})}=\{G(x_1), ...., G(x_n)\},$$
 an unordered collection of the orbits of $n$ coordinates of ${\bf x}$ under the action of $G$. Namely, two endpoints of each orbit braid $\widetilde{c(\alpha)}$ are the same up to homeomorphism. Here we also call $\widetilde{c({\bf x})}$ the {\em (unordered) orbit base point}.
 \vskip .2cm
 In the theory of ordinary braids, isotopy is used as the equivalence relation among ordinary braids.
 However, equivariant isotopy is not sufficient enough to be used as the equivalence relation among orbit braids. We can see this from the following example.

 \begin{example} \label{equ-relation}
Let the action of $\mathbb{Z}_2$ on $\mathbb{C}$ be the same as that in Example~\ref{ex1}.
 Consider the orbit configuration space $F_{\mathbb{Z}_2}(\mathbb{C},n)$.  In the case of $n=2$, take two closed paths $\alpha, \beta: I\longrightarrow F_{\mathbb{Z}_2}(\mathbb{C},2)$ at the base point ${\bf x}=(1,2)$ such that their corresponding ordinary braids $c(\alpha)$ and $c(\beta)$ are shown as follows:

\begin{tikzpicture}
\draw [line width=0.04cm] (-7,0)--(-2,0);
\draw [line width=0.04cm] (2,0)--(7,0);
\node[right] at (-2,0) {$t=0$};
\node[right] at (7,0) {$t=0$};
\draw [line width=0.04cm] (-7,-2)--(-2,-2);
\draw [line width=0.04cm] (2,-2)--(7,-2);
\node[right] at (7,-2) {$t=1$};
\node[right] at (-2,-2) {$t=1$};
\draw [dashed] (-4.5,1)--(-4.5,-3);
\node
 [above] at (-4.5,1) {$O$};
 \draw [dashed] (4.5,1)--(4.5,-3);
 \node
  [above] at (4.5,1) {$O$};

\draw (-4,0)--(-5,-1);
\draw (-5,-1)--(-4,-2);
\node[above] at (-4,0) {$1$};
\node[above] at (-5,0) {$-1$};
\draw (-3,0)--(-3,-2);
\node[above] at (-3,0) {$2$};
\node[above] at (-6,0) {$-2$};
\node
 [below] at (-4.5,-3) {$c(\alpha)$};

\draw (5,0)--(5,-2);
\node[above] at (5,0) {$1$};
\node[above] at (4,0) {$-1$};

\draw (6,0)--(6,-2);
\node[above] at (6,0) {$2$};
\node[above] at (3,0) {$-2$};
\node[below] at (4.5,-3) {$c(\beta)$};
\end{tikzpicture}

\noindent Clearly, $c(\alpha)\sim_{iso}^G c(\beta)$. This means that orbit braids $\widetilde{c(\alpha)}$ and $\widetilde{c(\beta)}$ as shown below  are essentially the same in such a sense that the first string of $c(\alpha)$ can be deformed into the first string of $c(\beta)$ in $M\times I$ under the action of $G$.  However, $\widetilde{c(\alpha)}$ and $\widetilde{c(\beta)}$ are not equivariant isotopic since they are even not homeomorphic.

\begin{tikzpicture}
\draw [line width=0.04cm] (-7,0)--(-2,0);
\draw [line width=0.04cm] (2,0)--(7,0);
\node[right] at (-2,0) {$t=0$};
\node[right] at (7,0) {$t=0$};
\draw [line width=0.04cm] (-7,-2)--(-2,-2);
\draw [line width=0.04cm] (2,-2)--(7,-2);
\node[right] at (7,-2) {$t=1$};
\node[right] at (-2,-2) {$t=1$};
\draw [dashed] (-4.5,1)--(-4.5,-3);
\node[above] at (-4.5,1) {$O$};
 \draw [dashed] (4.5,1)--(4.5,-3);
 \node
  [above] at (4.5,1) {$O$};

\draw (-4,0)--(-5,-1);
\draw (-5,-1)--(-4,-2);
\node[above] at (-4,0) {$1$};

\draw (-3,0)--(-3,-2);
\node[above] at (-3,0) {$2$};

\filldraw[red] (-5,0)--(-4,-1);
\filldraw[red] (-4,-1)--(-5,-2);
\node[above] at (-5,0) {$-1$};

\filldraw[red] (-6,0)--(-6,-2);
\node[above] at (-6,0) {$-2$};
\node [below] at (-4.5,-3) {$\widetilde{c(\alpha)}$};

\draw (5,0)--(5,-2);
\node[above] at (5,0) {$1$};

\draw (6,0)--(6,-2);
\node[above] at (6,0) {$2$};

\filldraw[red] (4,0)--(4,-2);
\node[above] at (4,0) {$-1$};

\filldraw[red] (3,0)--(3,-2);
\node[above] at (3,0) {$-2$};
\node [below] at (4.5,-3) {$\widetilde{c(\beta)}$};
\end{tikzpicture}
\end{example}

Based upon this observation, we define the following equivalence relation among orbit braids, which is of nature to orbit braids.

\vskip .2cm
Let $\alpha$ and $\beta$ be two paths in $F_G(M, n)$ such that $\widetilde{c(\alpha)}$ and $\widetilde{c(\beta)}$  are two orbit braids at $\widetilde{c({\bf x})}$ in $M\times I$.

\begin{definition}\label{D1}
  We say that
 $\widetilde{c(\alpha)}$ and $\widetilde{c(\beta)}$  are {\bf equivalent},  denoted by  $\widetilde{c(\alpha)}\sim \widetilde{c(\beta)}$, if there are some $g$ and $h$ in $G^{\times n}$  such that
  $c(g\alpha)\sim_{iso}^G c(h\beta)$.
\end{definition}

\begin{remark}
It should be pointed out that if the action of $G$ on $M$ is free, then $\widetilde{c(\alpha)}\sim \widetilde{c(\beta)}$
if and only if $\widetilde{c(\alpha)}$ and  $\widetilde{c(\beta)}$ are equivariantly isotopic relative to endpoints of $\widetilde{c(\alpha)}$ and $\widetilde{c(\beta)}$. However, if  the action of $G$ on $M$ is not free, then generally $\widetilde{c(\alpha)}$ and  $\widetilde{c(\beta)}$ are not equivariantly isotopic even if $\widetilde{c(\alpha)}\sim \widetilde{c(\beta)}$, as seen in Example~\ref{equ-relation}.
\end{remark}

\begin{proposition}\label{sn-condition}
 $\widetilde{c(\alpha)}\sim \widetilde{c(\beta)}$ if and only if
there are two paths $\alpha'$ and $\beta'$ with $\widetilde{c(\alpha')}=\widetilde{c(\alpha)}$ and $\widetilde{c(\beta')}=\widetilde{c(\beta)}$,  such that $\alpha'$ is homotopic to $\beta'$ relative to $\partial I$.
\end{proposition}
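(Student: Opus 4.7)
The plan is to reduce both directions to Proposition~\ref{first ns} combined with the structural identification (recorded in the paragraph just before Definition~\ref{orbit braid}): two paths $\alpha, \alpha'$ in $F_G(M,n)$ satisfy $\widetilde{c(\alpha)}=\widetilde{c(\alpha')}$ if and only if $\alpha'=g\alpha_{\sigma}$ for some $g\in G^{\times n}$ and $\sigma\in \Sigma_n$. Once this dictionary is available, the proposition becomes a careful bookkeeping exercise that moves between the ``ordered'' picture (paths, homotopy rel $\partial I$) and the ``unordered orbit'' picture (configurations, $\sim_{iso}^G$).

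For the forward direction, I would unpack Definition~\ref{D1}: there exist $g,h\in G^{\times n}$ such that $c(g\alpha)\sim_{iso}^G c(h\beta)$. The very notion $\sim_{iso}^G$ (Definition~\ref{isotopy}) forces the two paths $g\alpha$ and $h\beta$ to agree at $s=0$ and $s=1$ as points of $F_G(M,n)$, so Proposition~\ref{first ns} applies and yields $g\alpha\simeq h\beta$ rel $\partial I$. Setting $\alpha':=g\alpha$ and $\beta':=h\beta$ finishes this direction, because the trivial action of $G$ on $I$ makes $\widetilde{c(g\alpha)}=\widetilde{c(\alpha)}$ and $\widetilde{c(h\beta)}=\widetilde{c(\beta)}$ automatic.

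For the backward direction, suppose $\alpha'\simeq\beta'$ rel $\partial I$ with $\widetilde{c(\alpha')}=\widetilde{c(\alpha)}$ and $\widetilde{c(\beta')}=\widetilde{c(\beta)}$. Proposition~\ref{first ns} converts the homotopy into $c(\alpha')\sim_{iso}^G c(\beta')$, and the structural fact above provides $g,h\in G^{\times n}$ and $\sigma,\tau\in\Sigma_n$ with $\alpha'=g\alpha_{\sigma}$ and $\beta'=h\beta_{\tau}$. Since $c(\cdot)$ is by definition an unordered disjoint union of strings, the coordinate permutations can be absorbed into the group labels: putting $g'_i:=g_{\sigma^{-1}(i)}$ and $h'_i:=h_{\tau^{-1}(i)}$ gives $c(g\alpha_{\sigma})=c(g'\alpha)$ and $c(h\beta_{\tau})=c(h'\beta)$, whence $c(g'\alpha)\sim_{iso}^G c(h'\beta)$, which is exactly the condition required by Definition~\ref{D1}.

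The only subtlety, and hence the main (but modest) obstacle, is verifying that a relabeling of the $n$ homotopy maps $\widehat{F}_i$ in Definition~\ref{isotopy} preserves conditions (1)--(3), so that $c(g\alpha_{\sigma})\sim_{iso}^G c(h\beta_{\tau})$ and $c(g'\alpha)\sim_{iso}^G c(h'\beta)$ really encode the same equivalence. This is a direct check, because conditions (1)--(3) depend only on the unordered family of strings and not on the order in which the $\widehat{F}_i$ are listed; with this observation the proposition follows.
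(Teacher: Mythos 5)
Your argument is correct and follows exactly the route the paper intends: the paper's own proof is the single sentence ``This is a consequence of Proposition~\ref{first ns} and Definition~\ref{D1},'' and your writeup is simply that reduction carried out in full, including the one genuinely checkable point (absorbing the coordinate permutations $\sigma,\tau$ into the group labels $g',h'$, which is harmless because $c(\cdot)$ and conditions (1)--(3) of Definition~\ref{isotopy} see only the unordered family of strings). Nothing further is needed.
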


\begin{proof}
This is a consequence of Proposition~\ref{first ns} and Definition~\ref{D1}.
\end{proof}

\begin{remark}
Assume that $\widetilde{c(\alpha)}\sim \widetilde{c(\beta)}$. Take two paths $\alpha'$ and $\beta'$ such that $\widetilde{c(\alpha')}=\widetilde{c(\alpha)}$ and $\widetilde{c(\beta')}=\widetilde{c(\beta)}$.  An easy argument shows that
$\alpha'$ is  homotopic to $\beta'$ relative to $\partial I=\{0,1\}$ if and only if  $\alpha'$ and $\beta'$
have the same endpoints. This is because the base point ${\bf x}$ possesses the property that the orbit
$G^{\times n}({\bf x})$ is of free type.
\end{remark}

Next, using the equivalence relation in Definition~\ref{D1}, define $\mathcal{B}_n^{orb}(M, G)$ as the set consisting of the equivalence classes of all orbit braids at the orbit base point $\widetilde{c({\bf x})}$ in $M\times I$.

\vskip .2cm
Take a class $[\widetilde{c(\alpha)}]$  in $\mathcal{B}_n^{orb}(M, G)$, for any two representatives $R'$ and $R''$ of $[\widetilde{c(\alpha)}]$,  Proposition~\ref{sn-condition} tells us that there exist two paths $\alpha',\alpha'': I\longrightarrow
F_G(M,n)$, which are homotopic relative to $\partial I$, such that $R'=\widetilde{c(\alpha'})$ and $R''=\widetilde{c(\alpha''})$.
We see easily that for any $g\in G^{\times n}$, $g\alpha'$ is still homotopic to $g\alpha''$ relative to $\partial I$, and
$R'=\widetilde{c(g\alpha'})$ and $R''=\widetilde{c(g\alpha''})$. So,
$$[\widetilde{c(\alpha)}]=[\widetilde{c(\alpha'})]=[\widetilde{c(\alpha''})]=[\widetilde{c(g\alpha'})]=
[\widetilde{c(g\alpha''})].$$
In addition, for any $\sigma\in \Sigma_n$, we have known that $\widetilde{c(\alpha)}=\widetilde{c(\alpha_\sigma)}$.
Thus this gives the following lemma.

\begin{lemma}\label{h-class}
For any $g\in G^{\times n}$ and any $\sigma\in \Sigma_n$, each class $[\widetilde{c(\alpha)}]$  in $\mathcal{B}_n^{orb}(M, G)$ is completely determined by the homotopy class  of  $g\alpha_\sigma$ relative to $\partial I$.
\end{lemma}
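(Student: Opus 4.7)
The plan is to read this lemma as a direct consequence of Proposition~\ref{sn-condition} together with the observation that the action of $G^{\times n} \rtimes \Sigma_n$ on the path space of $F_G(M,n)$ carries homotopy rel $\partial I$ to homotopy rel $\partial I$. The preceding paragraph has essentially verified all the pieces, so what remains is to assemble them into a clean implication and verify well-definedness.

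First I would make precise what ``determined by'' means: I must show that whenever two paths $\alpha$ and $\beta$ starting at $\mathbf{x}$ satisfy $g\alpha_\sigma \simeq g\beta_\sigma$ rel $\partial I$ for some fixed $(g,\sigma) \in G^{\times n} \times \Sigma_n$, one has $[\widetilde{c(\alpha)}] = [\widetilde{c(\beta)}]$. The converse direction---that this homotopy class really is an invariant of $[\widetilde{c(\alpha)}]$---has already been established in the paragraph just above, since changing $\alpha$ within its equivalence class produces only $G^{\times n}$- and $\Sigma_n$-translates, which the paragraph has shown leave the equivalence class intact.

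For the main implication, I would observe that each coordinate $g_i \in G$ acts as a self-homeomorphism of $M$, so the diagonal action of $g$ on $F_G(M,n)$ is a self-homeomorphism whose inverse is $g^{-1}$; the permutation action of $\sigma$ is likewise a self-homeomorphism of $F_G(M,n)$. Composing a homotopy rel $\partial I$ between $g\alpha_\sigma$ and $g\beta_\sigma$ first with $g^{-1}$ and then with the reindexing by $\sigma^{-1}$ yields a homotopy rel $\partial I$ between $\alpha$ and $\beta$. The endpoints automatically match at $t=0$ because both $\alpha$ and $\beta$ start at $\mathbf{x}$, and at $t=1$ because the hypothesis that the orbit $G^{\times n}(\mathbf{x})$ is of free type lets us unambiguously recover $\alpha(1)$ from $g\alpha_\sigma(1)$. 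Proposition~\ref{sn-condition} applied with the trivial choice $\alpha'=\alpha$, $\beta'=\beta$ then yields $\widetilde{c(\alpha)} \sim \widetilde{c(\beta)}$, i.e., $[\widetilde{c(\alpha)}] = [\widetilde{c(\beta)}]$.

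There is no major obstacle here: the only thing that needs care is the verification that the $G^{\times n} \rtimes \Sigma_n$-action on paths respects homotopy rel $\partial I$, which is immediate because each generator acts by a self-homeomorphism of $F_G(M,n)$, and composition with a homeomorphism carries homotopies to homotopies. The free-orbit hypothesis on $\mathbf{x}$ enters in exactly one place, namely to ensure the endpoints of $\alpha$ and $\beta$ can be read off from those of $g\alpha_\sigma$ and $g\beta_\sigma$ so that the produced homotopy is genuinely rel $\partial I$.
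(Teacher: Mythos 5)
Your argument is correct and essentially the paper's own: both rest on the observation that a pair $(g,\sigma)$ acts on paths by a self-homeomorphism of $F_G(M,n)$ and hence preserves homotopy rel $\partial I$, combined with Proposition~\ref{sn-condition} to translate the resulting homotopy $\alpha\simeq\beta$ rel $\partial I$ into $[\widetilde{c(\alpha)}]=[\widetilde{c(\beta)}]$. One small quibble: the free-orbit hypothesis on $\mathbf{x}$ is not actually needed where you invoke it, since you recover $\alpha$ from $g\alpha_\sigma$ by applying the explicit inverse homeomorphism, which automatically carries matching endpoints to matching endpoints; freeness of the orbit is only needed later (Lemma~\ref{unique}) to recover the pair $(g,\sigma)$ itself from the endpoint.
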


\begin{remark}\label{zero}
By Lemma~\ref{h-class}, we see that the homotopy class of $\alpha$ relative to $\partial I$ also determines the class $[\widetilde{c(\alpha)}]$. Without  loss of generality we may  assume that $\alpha(0)={\bf x}$. In fact, if $\alpha(0)\not={\bf x}$, then we may  write $\alpha(0)=h{\bf x}_\tau$ where $h\in G^{\times n}$ and $\tau\in \Sigma_n$. Moreover,
we get a path $\alpha'=h_{\tau^{-1}}^{-1}\alpha_{\tau^{-1}}$ such that
$\alpha'(0)={\bf x}$, and in particular, $\widetilde{c(\alpha'})=\widetilde{c(\alpha)}$, so the homotopy class  of $\alpha'$ relative to $\partial I$ completely determines the class $[\widetilde{c(\alpha)}]$.
\end{remark}

\begin{lemma}\label{unique}
Each class $[\widetilde{c(\alpha)}]$  in $\mathcal{B}_n^{orb}(M, G)$ determines a unique pair $(g, \sigma)\in G^{\times n}\times \Sigma_n$.
\end{lemma}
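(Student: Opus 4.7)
The plan is to isolate the natural ``endpoint data'' of a representative path and show both that it determines $(g,\sigma)$ unambiguously and that it is invariant under the equivalence relation $\sim$. Given a class $[\widetilde{c(\alpha)}]$, Remark~\ref{zero} lets me choose a representative path $\alpha:I\to F_G(M,n)$ with $\alpha(0)={\bf x}$; the definition of an orbit braid then provides a pair $(g,\sigma)$ with $\alpha(1)=g{\bf x}_\sigma$. The task therefore reduces to proving (i) the pair $(g,\sigma)$ is determined by the point $\alpha(1)\in F_G(M,n)$, and (ii) the point $\alpha(1)$ depends only on the class $[\widetilde{c(\alpha)}]$, not on the particular normalized representative chosen.

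For (i), suppose $g{\bf x}_\sigma=g'{\bf x}_{\sigma'}$, i.e.\ $g_i x_{\sigma(i)}=g'_i x_{\sigma'(i)}$ for every $i$. The $i$-th coordinate lies in both $G(x_{\sigma(i)})$ and $G(x_{\sigma'(i)})$; since ${\bf x}\in F_G(M,n)$ forces the orbits $G(x_1),\ldots,G(x_n)$ to be pairwise disjoint, we must have $\sigma(i)=\sigma'(i)$ for each $i$, hence $\sigma=\sigma'$. Cancelling then gives $(g'_i)^{-1}g_i\in\mathrm{Stab}(x_{\sigma(i)})$, which is trivial because ${\bf x}$ was chosen so that every $G(x_j)$ is of free orbit type; therefore $g=g'$.

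For (ii), suppose $\widetilde{c(\alpha)}\sim\widetilde{c(\beta)}$ with $\alpha,\beta$ both normalized so that $\alpha(0)=\beta(0)={\bf x}$. Proposition~\ref{sn-condition} supplies paths $\alpha',\beta':I\to F_G(M,n)$ with $\widetilde{c(\alpha')}=\widetilde{c(\alpha)}$, $\widetilde{c(\beta')}=\widetilde{c(\beta)}$, and $\alpha'\simeq\beta'$ rel $\partial I$. By the discussion preceding Definition~\ref{orbit braid}, $\alpha'=h\alpha_\tau$ and $\beta'=h'\beta_{\tau'}$ for some $h,h'\in G^{\times n}$ and $\tau,\tau'\in\Sigma_n$. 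The matching of starting points $\alpha'(0)=h{\bf x}_\tau=h'{\bf x}_{\tau'}=\beta'(0)$, together with the free-orbit argument from (i), yields $h=h'$ and $\tau=\tau'$. The matching of endpoints $\alpha'(1)=\beta'(1)$ then collapses coordinate-wise (cancelling the common $G^{\times n}$-action and $\tau$-permutation, again using free orbit type) to $\alpha(1)=\beta(1)$, so the pair $(g,\sigma)$ extracted from either representative coincides.

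The main obstacle I expect is the bookkeeping in step~(ii): an orbit braid $\widetilde{c(\alpha)}$ has many underlying paths of the form $g\alpha_\sigma$, so Proposition~\ref{sn-condition} only promises that \emph{some} lift $\alpha'$ of $\widetilde{c(\alpha)}$ is homotopic to \emph{some} lift $\beta'$ of $\widetilde{c(\beta)}$. The free-orbit hypothesis on ${\bf x}$ has to be invoked twice --- once to pin down the auxiliary pair $(h,\tau)$ from the starting point of the lifts, and a second time to transfer the equality of endpoints back to $\alpha(1)=\beta(1)$ --- and I expect the only delicate point is organizing these two applications cleanly.
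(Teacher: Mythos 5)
Your proposal is correct and follows essentially the same route as the paper: normalize representatives to start at ${\bf x}$, use Proposition~\ref{sn-condition} together with the relation $\alpha'=h\alpha_\tau$ between paths with equal orbit braids, and exploit the free orbit type of ${\bf x}$ to pin down the normalization and hence the endpoint $g{\bf x}_\sigma$. Your step~(i), showing that $(g,\sigma)\mapsto g{\bf x}_\sigma$ is injective via disjointness of the orbits $G(x_i)$ and triviality of the stabilizers, is used only implicitly in the paper's proof, so making it explicit is a small improvement rather than a departure.
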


\begin{proof}
By  Remark~\ref{zero},  we may write $\alpha(0)={\bf x}=(x_1, ..., x_n)$. Next let us look at the ending point
$\alpha(1)$ of $\alpha$.  There must be a $g\in G^{\times n}$ and a
permutation $\sigma\in \Sigma_n$ such that  $\alpha(1)=g{\bf x}_{\sigma}$.
Consider a path $\alpha'$ such that $\widetilde{c(\alpha')}\sim\widetilde{c(\alpha)}$. Then there exists
 a pair $(h, \tau)\in G^{\times n}\times \Sigma_n$ such that $\alpha'$ is homotopic to $h\alpha_\tau$ relative to $\partial I$.
 So $\alpha'(0)=h\alpha_\tau(0)=h{\bf x}_\tau$ and $\alpha'(1)=h\alpha_\tau(1)=h(g{\bf x}_\sigma)_\tau$.
 We can use  $h$ and $\tau$ to change the endpoints of $\alpha'$ such that  $$h_{\tau^{-1}}^{-1}\alpha'_{\tau^{-1}}(0)=h_{\tau^{-1}}^{-1}(h{\bf x}_\tau)_{\tau^{-1}}={\bf x}$$
 and $h^{-1}_{\tau^{-1}}\alpha'_{\tau^{-1}}(1)=g{\bf x}_\sigma$.
Since $\widetilde{c(\alpha')}=\widetilde{c(h^{-1}_{\tau^{-1}}\alpha'_{\tau^{-1}})}$, we obtain that $\widetilde{c(\alpha')}$  also determines
the pair $(g, \sigma)$, implying that $(g, \sigma)$ does not depend upon  the choice of  representatives of $[\widetilde{c(\alpha)}]$.
\end{proof}

\begin{remark}
For the  unique pair $(g, \sigma)$ determined by $[\widetilde{c(\alpha)}]$ in Lemma~\ref{unique}, we see that as long as  $\alpha(0)={\bf x}$,  then the ending point $\alpha(1)=g{\bf x}_{\sigma}$ completely determines this pair. Furthermore,
  up to the order of ${\bf x}$,
the ending point $\alpha(1)=g{\bf x}_{\sigma}$ can be changed into
$$(g_{\sigma^{-1}(1)}x_1, ..., g_{\sigma^{-1}(n)}x_n),$$
which just gives an action of $\sigma$ on $g$.
\end{remark}

Let $\pi_1^E(F_G(M,n), {\bf x}, {\bf x}^{orb})$ be the set consisting of the homotopy classes relative to $\partial I$ of all paths
$\alpha: I\longrightarrow F_G(M,n)$ with $\alpha(0)={\bf x}$ and $\alpha(1)\in {\bf x}^{orb}$, where
${\bf x}^{orb}=\{g{\bf x}_\sigma| g\in G^{\times n}, \sigma\in \Sigma_n\}$, which is the orbit set at ${\bf x}$ under two actions of $G^{\times n}$ and $\Sigma_n$.
With Proposition~\ref{sn-condition}, Lemma~\ref{h-class} and Lemma~\ref{unique} together, we have

\begin{corollary}\label{one-one}
$\mathcal{B}_n^{orb}(M, G)$  bijectively corresponds to  $\pi_1^E(F_G(M,n), {\bf x},  {\bf x}^{orb})$ as sets.
\end{corollary}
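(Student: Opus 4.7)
The plan is to construct the bijection as the obvious assignment $\Phi:[\alpha]\mapsto[\widetilde{c(\alpha)}]$ and verify its two directions using the three preceding statements. Well-definedness of $\Phi$ is immediate from Proposition~\ref{sn-condition}: if $\alpha\simeq\beta$ rel $\partial I$, then taking $\alpha'=\alpha$ and $\beta'=\beta$ in that proposition already exhibits $\widetilde{c(\alpha)}\sim\widetilde{c(\beta)}$, so the class $[\widetilde{c(\alpha)}]$ depends only on $[\alpha]$.

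For surjectivity, I would start from an arbitrary class $[\widetilde{c(\alpha)}]\in\mathcal{B}_n^{orb}(M,G)$ and invoke Remark~\ref{zero} to replace the representative by a path $\alpha'$ whose initial point is ${\bf x}$ without altering the orbit braid; the endpoint $\alpha'(1)$ then lies in ${\bf x}^{orb}$ by the very definition of orbit braid, so $[\alpha']$ is a preimage in $\pi_1^E(F_G(M,n),{\bf x},{\bf x}^{orb})$.

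The substantive step is injectivity. Suppose $\alpha,\beta:I\to F_G(M,n)$ both start at ${\bf x}$ with endpoints in ${\bf x}^{orb}$ and satisfy $\widetilde{c(\alpha)}\sim\widetilde{c(\beta)}$. Proposition~\ref{sn-condition} supplies paths $\alpha'$ and $\beta'$, homotopic rel $\partial I$, with $\widetilde{c(\alpha')}=\widetilde{c(\alpha)}$ and $\widetilde{c(\beta')}=\widetilde{c(\beta)}$; by the description of the orbit of a path preceding Definition~\ref{orbit braid}, these can be written $\alpha'=g\alpha_\sigma$ and $\beta'=h\beta_\tau$ for suitable $(g,\sigma),(h,\tau)\in G^{\times n}\times\Sigma_n$. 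Equality of the initial points of $\alpha'$ and $\beta'$ forces $g{\bf x}_\sigma=h{\bf x}_\tau$. The main obstacle is to deduce $(g,\sigma)=(h,\tau)$ from this identity, and here the standing hypothesis that ${\bf x}$ is of free orbit type becomes essential: the pairwise disjointness of the orbits $G(x_1),\dots,G(x_n)$ (built into membership in $F_G(M,n)$) forces $\sigma=\tau$ componentwise, and the freeness of each orbit then forces $g=h$. Consequently $\alpha'=g\alpha_\sigma$ and $\beta'=g\beta_\sigma$ are homotopic rel $\partial I$; applying the componentwise inverse $g^{-1}$ and then the permutation $\sigma^{-1}$ to the homotopy recovers $\alpha\simeq\beta$ rel $\partial I$. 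This, together with Lemma~\ref{h-class} and Lemma~\ref{unique} to confirm that nothing is lost in the choices of representatives, completes the bijection.
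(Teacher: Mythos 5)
Your proposal is correct and follows the paper's own route: the paper derives the corollary directly from Proposition~\ref{sn-condition}, Lemma~\ref{h-class} and Lemma~\ref{unique}, and your argument simply spells out the details those citations compress, in particular the injectivity step where the free orbit type of ${\bf x}$ forces $g{\bf x}_\sigma=h{\bf x}_\tau$ to imply $(g,\sigma)=(h,\tau)$, which is exactly the content of the proof of Lemma~\ref{unique}.
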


\begin{remark}\label{permutation}
Given $\sigma\in \Sigma_n$ and $g\in G^{\times n}$,  we see easily  that  $\pi_1^E(F_G(M,n), {\bf x},  {\bf x}^{orb})$
  bijectively corresponds to $\pi_1^E(F_G(M,n), g{\bf x}_{\sigma},  {\bf x}^{orb})$ by mapping $[\alpha]$ to
  $[g\alpha_{\sigma}]$, so
$\pi_1^E(F_G(M,n), g{\bf x}_{\sigma},  {\bf x}^{orb})$ also  bijectively corresponds to $\mathcal{B}_n^{orb}(M, G)$. Of course,  $[\widetilde{c(\alpha)}]=[\widetilde{c(g\alpha_{\sigma}})]$.
\end{remark}

With the above understanding, for each class $[\widetilde{c(\alpha)}]$ in $\mathcal{B}_n^{orb}(M, G)$, {\em we will always assume that the path $\alpha$ is in a class of $\pi_1^E(F_G(M,n), {\bf x},  {\bf x}^{orb})$}. By $[\alpha]$ we denote the homotopy class (relative to $\partial I$) determined by $\alpha$.

\subsection{Groups of orbit braids and their homotopy descriptions}
 Let $[\widetilde{c(\alpha)}]$ and $[\widetilde{c(\beta)}]$ be two classes in $\mathcal{B}_n^{orb}(M, G)$.
 \vskip .2cm
  First let us consider the  operation between  $\widetilde{c(\alpha)}$ and
$\widetilde{c(\beta)}$ in an intuitive way.  Since two orbit braids have the same endpoints, intuitively we can obtain a new orbit braid $\widetilde{c(\alpha)}\circ\widetilde{c(\beta)}$
  by gluing the starting points of $n$ orbit strings in $\widetilde{c(\beta)}$ to the ending points of $n$ orbit strings in $\widetilde{c(\alpha)}$. More precisely,
 $$\widetilde{c(\alpha)}\circ\widetilde{c(\beta)}|_{s\in I}=\begin{cases}
 \widetilde{c(\alpha)}|_{2s\in I} & \text{if } s\in [0, {1\over 2}]\\
 \widetilde{c(\beta)}|_{2s-1\in I} & \text{if } s\in [{1\over 2}, 1].
 \end{cases}
 $$
 Clearly this operation $\circ$ is well-defined, but it is not associative as in the case of the operation among ordinary paths.
  By Corollary~\ref{one-one}, this  new orbit braid $\widetilde{c(\alpha)}\circ\widetilde{c(\beta)}$ should be  determined by a path $\gamma: I\longrightarrow F_G(M,n)$ with $\gamma(0)={\bf x}$ and $\gamma(1)\in {\bf x}^{orb}$. Such a path $\gamma$ can be constructed as follows:

  \vskip .2cm

  By Lemma~\ref{unique}, there exist a unique pair $(g, \sigma)\in G^{\times n}\times \Sigma_n$ such that  $\alpha(1)=g{\bf x}_\sigma$.  Of course,  $[\widetilde{c(\beta)}]$ also determines a unique pair $(h, \tau)\in G^{\times n}\times\Sigma_n$ such that  $\beta(1)=h{\bf x}_\tau$.
 Consider $\widehat{\beta}=g\beta_{\sigma}$, since $\beta(0)={\bf x}$, we have that
 $\widehat{\beta}(0)=\alpha(1)=g{\bf x}_\sigma$, and we know from Remark~\ref{permutation} that
 $\widetilde{c(\beta)}= \widetilde{c(\widehat{\beta}})$. Then we can construct a new path
 $$\gamma(s)=\alpha\circ\widehat{\beta}(s)=\begin{cases}
\alpha(2s) & \text{if } s\in [0, {1\over 2}]\\
\widehat{\beta}(2s-1) & \text{if } s\in [{1\over 2}, 1]
\end{cases}$$
with $\gamma(0)={\bf x}$ and $\gamma(1)=g\beta_{\sigma}(1)=g(h{\bf x}_\tau)_\sigma=
gh_\sigma{\bf x}_{\sigma\tau}$,  as desired.

 \begin{remark}\label{pair operation}
 In the above construction of $\gamma$, we see that
 two pairs  $(g, \sigma)$ and $(h, \tau)$ actually produce a new  pair $(gh_\sigma, \sigma\tau)$, which is uniquely determined by
$[\widetilde{c(\alpha)}\circ\widetilde{c(\beta)}]=[\widetilde{c(\gamma)}]$.
\end{remark}

Now we define an operation $*$ on $\mathcal{B}_n^{orb}(M, G)$ by
$$[\widetilde{c(\alpha)}]*[\widetilde{c(\beta)}]=[\widetilde{c(\alpha)}\circ\widetilde{c(\beta)}].$$
 We claim that the operation $*$ is well-defined and associative. By
 Corollary~\ref{one-one}, it suffices to show that for any $\alpha'\in [\alpha]$ and any $\beta'\in [\beta]$,
$$[\widetilde{c(\alpha'})]*[\widetilde{c(\beta'})]=[\widetilde{c(\alpha'})\circ\widetilde{c(\beta'})]
=[\widetilde{c(\alpha)}\circ\widetilde{c(\beta)}]=[\widetilde{c(\alpha)}]*[\widetilde{c(\beta)}].$$
Since $\alpha(i)=\alpha'(i)$ and $\beta(i)=\beta'(i)$ for $i=0,1$, we have that $g\beta'_{\sigma}(0)=\alpha(1)$ and  $\widetilde{c(\beta)}= \widetilde{c(g\beta'_{\sigma}})$.
In a similar way to the definition of $\gamma$ as above,
we may define $\gamma'=\alpha'\circ(g\beta'_\sigma)$.
Furthermore,  homotopy theory \cite{S} tells us that
$\gamma'=\alpha'\circ(g\beta'_{\sigma})$ is homotopic to $\gamma=\alpha\circ(g\beta_{\sigma})$ relative to $\partial I$, implying that the operation $*$ is well-defined. Since the operation $*$ is essentially reduced to the operation on the homotopy classes of  paths, it is also associative.

\begin{proposition}\label{group}
$\mathcal{B}_n^{orb}(M, G)$ forms a group under the operation $*$, called the {\em \bf orbit braid group} of the $G$-manifold $M$.
\end{proposition}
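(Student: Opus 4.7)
The plan is to verify the two remaining group axioms — existence of a two-sided identity and of two-sided inverses — since well-definedness and associativity of $*$ have already been established in the discussion preceding the proposition.

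For the identity, I take the constant path $\epsilon_{\bf x}: I \to F_G(M,n)$, $\epsilon_{\bf x}(s) = {\bf x}$; its class $[\widetilde{c(\epsilon_{\bf x})}]$ corresponds to the pair $(e, \mathrm{id}) \in G^{\times n} \times \Sigma_n$. For any $[\widetilde{c(\alpha)}]$ with $\alpha(1) = g{\bf x}_\sigma$, the construction $\widehat{\beta} = g\beta_\sigma$ applied to $\beta = \epsilon_{\bf x}$ yields the constant path $\epsilon_{g{\bf x}_\sigma}$ at $\alpha(1)$, so $\alpha \circ \widehat{\epsilon_{\bf x}}$ is homotopic rel $\partial I$ to $\alpha$ by classical path-homotopy. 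The left-identity check is symmetric: $\epsilon_{\bf x}$ already starts and ends at ${\bf x}$, so the construction reduces to the usual concatenation of the constant path at ${\bf x}$ with $\alpha$, which is again homotopic to $\alpha$ rel $\partial I$.

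For the inverse of $[\widetilde{c(\alpha)}]$, the natural candidate is a path $\beta$ whose underlying orbit braid reverses $\widetilde{c(\alpha)}$. Since the reverse $\bar{\alpha}(s) = \alpha(1-s)$ starts at $g{\bf x}_\sigma$ rather than ${\bf x}$, I translate it by the combined actions of $G^{\times n}$ and $\Sigma_n$ and set $\beta = h\bar{\alpha}_{\sigma^{-1}}$, where $h \in G^{\times n}$ has $i$-th component $h_i = g_{\sigma^{-1}(i)}^{-1}$. A componentwise check gives $\beta(0) = {\bf x}$ and $\beta(1) = h{\bf x}_{\sigma^{-1}} \in {\bf x}^{orb}$, so $[\widetilde{c(\beta)}] \in \mathcal{B}_n^{orb}(M, G)$. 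The crucial step is then to unwind $\widehat{\beta} = g\beta_\sigma$: the $i$-th component is $g_i \cdot g_{\sigma^{-1}(\sigma(i))}^{-1}\alpha_{\sigma^{-1}(\sigma(i))}(1-s) = g_i g_i^{-1}\alpha_i(1-s) = \bar{\alpha}_i(s)$, so $\widehat{\beta} = \bar{\alpha}$ exactly. Hence $\alpha \circ \widehat{\beta} = \alpha \circ \bar{\alpha}$, which is null-homotopic rel $\partial I$ by the standard path-homotopy argument, yielding $[\widetilde{c(\alpha)}] * [\widetilde{c(\beta)}] = [\widetilde{c(\epsilon_{\bf x})}]$. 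The identity $[\widetilde{c(\beta)}] * [\widetilde{c(\alpha)}] = [\widetilde{c(\epsilon_{\bf x})}]$ is obtained by applying the same recipe starting from $\beta$: its translated reverse returns $\alpha$, and the composition is again null-homotopic rel $\partial I$.

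The main obstacle is the bookkeeping that interweaves the componentwise $G^{\times n}$-action with the permutation $\Sigma_n$-action in the operation $*$; the precise form $\beta = h\bar{\alpha}_{\sigma^{-1}}$ with $h = g_{\sigma^{-1}}^{-1}$ is forced so that the built-in twist $\widehat{\beta} = g\beta_\sigma$ collapses exactly to the ordinary reverse $\bar{\alpha}$, after which all verifications reduce to standard path-homotopy facts.
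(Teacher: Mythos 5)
Your proof is correct and follows essentially the same route as the paper's: the constant path at ${\bf x}$ gives the identity, the reversed path gives the inverse, and everything reduces to the standard fact that $\alpha\circ\overline{\alpha}$ is null-homotopic rel $\partial I$. Your explicit translation $\beta=h\overline{\alpha}_{\sigma^{-1}}$ with $h=g_{\sigma^{-1}}^{-1}$, verifying that the twist $g\beta_\sigma$ collapses to $\overline{\alpha}$, is a welcome piece of bookkeeping that the paper leaves implicit (it simply writes $[\widetilde{c(\overline{\alpha})}]$ for the inverse, relying on Lemma~\ref{h-class} to renormalize the starting point).
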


\begin{proof}
Obviously, the class $[\widetilde{c(c_{\bf x}})]$ is just the unit element, where $c_{\bf x}$ is the constant path with $c_{\bf x}(s)={\bf x}, s\in I$. Let $[\widetilde{c(\alpha)}]$ be an element in $\mathcal{B}_n^{orb}(M, G)$.
Consider the inverse path $\overline{\alpha}$ of $\alpha$, i.e., $\overline{\alpha}(s)=\alpha(1-s)$. It is well-known in homotopy theory that $\alpha\circ \overline{\alpha}$ is homotopic to $c_{\bf x}$. Thus,
$$[\widetilde{c(\alpha)}]*[\widetilde{c(\overline{\alpha})}]=[\widetilde{c(\alpha)}\circ\widetilde{c(\overline{\alpha})}]=
[\widetilde{c(c_{\bf x})}]$$
gives that $[\widetilde{c(\alpha)}]^{-1}=[\widetilde{c(\overline{\alpha})}]$.
\end{proof}

When the action of $G$ on $M$ is trivial or the group $G$ is just the trivial group $\{e\}$, clearly $\mathcal{B}_n^{orb}(M,G)$ will degenerate into the ordinary braid group $B_n(M)$. Thus the notion of orbit braid  group is a generalization for ordinary braid groups.

\vskip .2cm

Putting some restrictions on endpoints of orbit braids, we may define some subgroups of $\mathcal{B}_n^{orb}(M, G)$ as follows.

\begin{definition} \label{subgroup-def} {\rm (Subgroups of  $\mathcal{B}_n^{orb}(M, G)$)}
\begin{itemize}
 \item[$(1)$] Those classes $[\widetilde{c(\alpha)}]$ with $\alpha(1)\in G^{\times n}({\bf x})$ of $\mathcal{B}_n^{orb}(M, G)$ form a subgroup of
 $\mathcal{B}_n^{orb}(M, G)$, which is called the {\bf pure orbit braid group}, denoted by $\mathcal{P}_n^{orb}(M,G)$.
 \item[$(2)$] Those classes $[\widetilde{c(\alpha)}]$ with $\alpha(1)\in \Sigma_n({\bf x})=\{{\bf x}_{\sigma}|\sigma\in \Sigma_n\}$ of $\mathcal{B}_n^{orb}(M, G)$ form a subgroup of
 $\mathcal{B}_n^{orb}(M, G)$, which is  called the {\bf  braid group}, denoted by $\mathcal{B}_n(M,G)$.
 \item[$(3)$] Those classes $[\widetilde{c(\alpha)}]$ with $\alpha(1)={\bf x}$ of $\mathcal{B}_n^{orb}(M, G)$ form a subgroup of
 $\mathcal{B}_n^{orb}(M, G)$, which is  called the {\bf pure braid group}, denoted by $\mathcal{P}_n(M,G)$.
\end{itemize}
\end{definition}

The above argument gives an insight to the $\pi_1^E(F_G(M,n), {\bf x}, {\bf x}^{orb})$ such that it can also form a group.  Actually we can  endow an operation $\bullet$ on $\pi_1^E(F_G(M,n), {\bf x}, {\bf x}^{orb})$ defined by
\begin{equation}\label{op}
[\alpha]\bullet[\beta]=[\alpha\circ(g\beta_{\sigma})]
\end{equation}
where  $(g, \sigma) \in G^{\times n}\times \Sigma_n$ is the unique pair determined by $[\widetilde{c(\alpha}]$. Then it is easy to see that  $\pi_1^E(F_G(M,n), {\bf x}, {\bf x}^{orb})$ becomes a group under this operation, and it is called the {\em extended fundamental group} of $F_G(M,n)$ at ${\bf x}^{orb}$. Thus we have

 \begin{theorem}\label{group'}
$\pi_1^E(F_G(M,n), {\bf x}, {\bf x}^{orb})$ forms a group under the operation $\bullet$. Furthermore,
the map $$\Lambda:\pi_1^E(F_G(M,n), {\bf x}, {\bf x}^{orb})\longrightarrow\mathcal{B}_n^{orb}(M, G)$$ given by $[\alpha]\longmapsto[\widetilde{c(\alpha}]$ is an isomorphism.
\end{theorem}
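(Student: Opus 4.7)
The plan is to bootstrap off Proposition~\ref{group} and Corollary~\ref{one-one}: since $\mathcal{B}_n^{orb}(M,G)$ is already a group and $\Lambda$ is already known as a set-theoretic bijection, my strategy is to verify that $\Lambda$ intertwines the proposed operation $\bullet$ on $\pi_1^E(F_G(M,n),{\bf x},{\bf x}^{orb})$ with $*$ on $\mathcal{B}_n^{orb}(M,G)$. Once this is done, the group axioms for $\bullet$ follow automatically by transport of structure, and $\Lambda$ is promoted from a bijection to a group isomorphism for free.

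First I would check that $\bullet$ is well-defined on homotopy classes. Suppose $\alpha'\simeq\alpha$ and $\beta'\simeq\beta$ rel $\partial I$. Because the homotopies fix endpoints, the unique pair $(g,\sigma)\in G^{\times n}\times\Sigma_n$ attached to $\alpha$ by Lemma~\ref{unique} coincides with the one attached to $\alpha'$. Since translation by $g$ and permutation by $\sigma$ are self-homeomorphisms of $F_G(M,n)$, one has $g\beta'_\sigma\simeq g\beta_\sigma$ rel $\partial I$, and then standard path-concatenation homotopy yields $\alpha'\circ(g\beta'_\sigma)\simeq\alpha\circ(g\beta_\sigma)$ rel $\partial I$, so the class $[\alpha\circ(g\beta_\sigma)]$ depends only on $[\alpha]$ and $[\beta]$.

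Next I would verify the intertwining identity $\Lambda([\alpha]\bullet[\beta])=\Lambda([\alpha])*\Lambda([\beta])$. The left-hand side is $[\widetilde{c(\alpha\circ(g\beta_\sigma))}]$, and the right-hand side is $[\widetilde{c(\alpha)}\circ\widetilde{c(\beta)}]$; but the explicit construction of $\gamma=\alpha\circ(g\beta_\sigma)$ preceding Remark~\ref{pair operation} was designed precisely so that $\widetilde{c(\gamma)}=\widetilde{c(\alpha)}\circ\widetilde{c(\beta)}$, whence equality. With $\Lambda$ a bijection carrying $\bullet$ to $*$, the operation $\bullet$ inherits associativity from $*$, the identity element is the class $[c_{\bf x}]$ of the constant path (which maps under $\Lambda$ to $[\widetilde{c(c_{\bf x})}]$), and an inverse for $[\alpha]$ is obtained by applying the renormalization trick from Remark~\ref{zero} to the reverse path $\overline{\alpha}$, yielding $[g^{-1}_{\sigma^{-1}}\overline{\alpha}_{\sigma^{-1}}]$ when $\alpha(1)=g{\bf x}_\sigma$. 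Consequently $\pi_1^E(F_G(M,n),{\bf x},{\bf x}^{orb})$ is a group and $\Lambda$ is a group isomorphism.

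The subtlest point I would guard against is the asymmetric role of $(g,\sigma)$ in formula~(\ref{op}): the pair attached to the \emph{first} factor twists the \emph{second}. Well-definedness thus rests critically on the uniqueness in Lemma~\ref{unique}, and the intertwining identity depends on confirming that the pair attached to the composite $[\alpha]\bullet[\beta]$ is exactly the product $(gh_\sigma,\sigma\tau)$ flagged in Remark~\ref{pair operation}; a careless accounting of these twists would silently break either well-definedness or the intertwining step, so I would carry out this bookkeeping carefully before declaring the transport-of-structure argument complete.
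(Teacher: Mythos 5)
Your proposal is correct and follows essentially the same route as the paper: the paper defines $\bullet$ by the very formula $[\alpha]\bullet[\beta]=[\alpha\circ(g\beta_\sigma)]$ used to construct the representative of $[\widetilde{c(\alpha)}]*[\widetilde{c(\beta)}]$, so the intertwining with $*$ is built in, and the group structure is transported across the bijection of Corollary~\ref{one-one} exactly as you describe (the paper leaves these verifications implicit with ``it is easy to see''). Your explicit bookkeeping of the twisting pair $(g,\sigma)$ and the inverse formula $[g^{-1}_{\sigma^{-1}}\overline{\alpha}_{\sigma^{-1}}]$ is accurate and fills in details the paper omits.
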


Obviously,  $\mathcal{P}_n^{orb}(M,G)$ bijectively corresponds to the $\pi_1^E(F_G(M,n), {\bf x},  G^{\times n}({\bf x}))$ defined in a same way as above which is also  a group under the operation $\bullet$. So $\mathcal{P}_n^{orb}(M,G)\cong\pi_1^E(F_G(M,n), {\bf x},  G^{\times n}({\bf x}))$. Similarly, $\mathcal{B}_n(M,G)\cong\pi_1^E(F_G(M,n), {\bf x},  \Sigma_n({\bf x}))$
and $$\mathcal{P}_n(M,G)\cong \pi_1^E(F_G(M,n), {\bf x},   {\bf x})=\pi_1(F_G(M,n), {\bf x}).$$ Here $\pi_1^E(F_G(M,n), {\bf x},  G^{\times n}({\bf x}))$ and $\pi_1^E(F_G(M,n), {\bf x},  \Sigma_n({\bf x}))$ are also called the {\em extended fundamental groups} of
$F_G(M,n)$ at $G^{\times n}({\bf x})$ and  $\Sigma_n({\bf x})$, respectively.
Therefore, those subgroups of $\mathcal{B}_n^{orb}(M,G)$ can be described in terms of the homotopy classes of paths in $F_G(M,n)$.

\begin{corollary} \label{homotopy version}
Homotopy decriptions of  subgroups $\mathcal{P}_n^{orb}(M,G)$, $\mathcal{B}_n(M,G)$ and $\mathcal{P}_n(M,G)$.
\begin{itemize}
 \item[$(1)$]  $\mathcal{P}_n^{orb}(M,G)\cong \pi_1^E(F_G(M,n), {\bf x},  G^{\times n}({\bf x}))$;
 \item[$(2)$]  $\mathcal{B}_n(M,G)\cong \pi_1^E(F_G(M,n), {\bf x},  \Sigma_n({\bf x}))$;
 \item[$(3)$]  $\mathcal{P}_n(M,G)\cong \pi_1^E(F_G(M,n), {\bf x},   {\bf x})=\pi_1(F_G(M,n), {\bf x})$.
\end{itemize}
\end{corollary}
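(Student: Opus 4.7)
The plan is to deduce all three statements as immediate consequences of Theorem~\ref{group'} by restricting the isomorphism $\Lambda:\pi_1^E(F_G(M,n),{\bf x},{\bf x}^{orb})\longrightarrow\mathcal{B}_n^{orb}(M,G)$ to appropriate subsets of homotopy classes, since the three subgroups in Definition~\ref{subgroup-def} are defined precisely by restricting the endpoints of the underlying paths.

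The first step is to verify, for each of the subsets
\[
\pi_1^E(F_G(M,n),{\bf x},G^{\times n}({\bf x})),\quad \pi_1^E(F_G(M,n),{\bf x},\Sigma_n({\bf x})),\quad \pi_1(F_G(M,n),{\bf x}),
\]
that they are closed under the operation $\bullet$ defined in (\ref{op}) and hence form subgroups of the extended fundamental group $\pi_1^E(F_G(M,n),{\bf x},{\bf x}^{orb})$. This is where Remark~\ref{pair operation} does the real work: if $[\alpha]$ is determined by the pair $(g,\sigma)$ and $[\beta]$ by $(h,\tau)$, then $[\alpha]\bullet[\beta]$ is determined by $(gh_\sigma,\sigma\tau)$. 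Consequently, restricting to $\sigma=e$ (endpoints in $G^{\times n}({\bf x})$) yields pairs $(gh,e)$, restricting to $g=e$ (endpoints in $\Sigma_n({\bf x})$) yields pairs $(e,\sigma\tau)$, and restricting to $(g,\sigma)=(e,e)$ yields again $(e,e)$. Inverses behave analogously, since the pair associated to $[\overline{\alpha}]$ is obtained by inverting $(g,\sigma)$ inside $G^{\times n}\rtimes\Sigma_n$.

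The second step is to observe that under the bijection of Corollary~\ref{one-one}, an orbit braid $\widetilde{c(\alpha)}$ lies in $\mathcal{P}_n^{orb}(M,G)$ (respectively $\mathcal{B}_n(M,G)$, $\mathcal{P}_n(M,G)$) exactly when a path representative $\alpha$ with $\alpha(0)={\bf x}$ can be chosen whose endpoint $\alpha(1)$ lies in $G^{\times n}({\bf x})$ (respectively $\Sigma_n({\bf x})$, $\{{\bf x}\}$); this is just an unpacking of Definition~\ref{subgroup-def} combined with Lemma~\ref{unique}. Therefore the isomorphism $\Lambda$ of Theorem~\ref{group'} restricts to group isomorphisms from these subgroups of the extended fundamental group onto $\mathcal{P}_n^{orb}(M,G)$, $\mathcal{B}_n(M,G)$, and $\mathcal{P}_n(M,G)$ respectively, which gives (1), (2), and (3).

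The only point that is not purely formal is the identification $\pi_1^E(F_G(M,n),{\bf x},\{{\bf x}\})=\pi_1(F_G(M,n),{\bf x})$ in (3): one must observe that when $\alpha$ and $\beta$ are both loops at ${\bf x}$ the associated pair is $(e,e)$, so the operation $\bullet$ in (\ref{op}) collapses to $[\alpha]\bullet[\beta]=[\alpha\circ\beta]$, which is precisely the standard concatenation in $\pi_1(F_G(M,n),{\bf x})$. No obstacle is expected beyond bookkeeping with the pair $(g,\sigma)$; the main thing to be careful about is to rule out the possibility that $\Lambda$ might send a class outside one of the three restricted subsets into the desired subgroup, which is handled by the uniqueness of the pair in Lemma~\ref{unique}.
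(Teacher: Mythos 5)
Your proposal is correct and follows essentially the same route as the paper: the paper also obtains all three isomorphisms by restricting the correspondence of Theorem~\ref{group'} (equivalently Corollary~\ref{one-one}) to the endpoint-restricted classes, noting that each restricted set $\pi_1^E(F_G(M,n),{\bf x},\cdot)$ is itself a group under $\bullet$ and that the case of loops recovers ordinary concatenation in $\pi_1(F_G(M,n),{\bf x})$. The paper states this tersely (``obviously \dots similarly''), whereas you spell out the closure and inverse checks via the pair $(g,\sigma)$ of Lemma~\ref{unique} and Remark~\ref{pair operation}; this is the same argument, just made explicit.
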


Corollary~\ref{homotopy version} tells us that the pure braid group $\mathcal{P}_n(M,G)$ can be realized as the
 fundamental group $\pi_1(F_G(M,n), {\bf x})$.
Later on, we shall show that  $\mathcal{B}_n(M,G)$ can  be realized as the fundamental group of $F_G(M,n)/\Sigma_n$ yet, and we shall see much more information on $\mathcal{B}_n^{orb}(M,G)$ and $\mathcal{P}_n^{orb}(M,G)$.

\begin{remark}
The above viewpoint can also be used in the theory of ordinary braids. Consider the case in which $G=\{e\}$.
 Then $\mathcal{B}_n^{orb}(M, G)$  degenerates into the ordinary braid group $B_n(M)$, which is isomorphic to the extended fundamental group $\pi_1^E(F(M,n), {\bf x}, \Sigma_n({\bf x}))$ of $F(M,n)$ at $\Sigma_n({\bf x})$. In this case, there is the following short exact sequence
$$1\longrightarrow \pi_1(F(M,n), {\bf x})\longrightarrow \pi_1^E(F(M,n), {\bf x}, \Sigma_n({\bf x}))\longrightarrow\Sigma_n\longrightarrow 1$$
from which we see that the extended fundamental group $\pi_1^E(F(M,n), {\bf x}, \Sigma_n({\bf x}))$ is actually the fundamental group of the unordered configuration space $F(M,n)/\Sigma_n$.  However, the  cases of $G\not=\{e\}$ will be  quite different.
\end{remark}

\subsection{Short exact sequences}

Let $\varphi:\Sigma_n\longrightarrow \Aut(G^{\times n})$ be a homomorphism defined by
$$\varphi(\sigma)(g)=g_\sigma=(g_{\sigma(1)}, ..., g_{\sigma(n)})$$
where $\sigma\in \Sigma_n$ and $g=(g_1, ..., g_n)\in G^{\times n}$. Then $\varphi$ gives a semidirect product
$G^{\times n}\rtimes_{\varphi}\Sigma_n$, where the operation $\cdot$ on $G^{\times n}\rtimes_{\varphi}\Sigma_n$ is given by
$$(g, \sigma)\cdot (h, \tau)=(gh_\sigma, \sigma\tau)$$
for $(g, \sigma), (h, \tau)\in G^{\times n}\rtimes_{\varphi}\Sigma_n$. Then, by Lemma~\ref{unique} and
Remark~\ref{pair operation}, we can define a homomorphism $$\Phi:  \mathcal{B}_n^{orb}(M, G)\longrightarrow G^{\times n}\rtimes_{\varphi}\Sigma_n$$ by
$\Phi([\widetilde{c(\alpha)}])=(g, \sigma)$, where $(g, \sigma)$ is the unique pair determined by $[\widetilde{c(\alpha)}]$.

\begin{lemma} \label{epi}
The homomorphism $$\Phi:  \mathcal{B}_n^{orb}(M, G)\longrightarrow G^{\times n}\rtimes_{\varphi}\Sigma_n$$ is an epimorphism.
\end{lemma}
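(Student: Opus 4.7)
The plan is to verify surjectivity of $\Phi$ by directly exhibiting, for an arbitrary element $(g,\sigma)\in G^{\times n}\rtimes_{\varphi}\Sigma_n$, a representative orbit braid whose associated pair is $(g,\sigma)$. The homomorphism property itself will not require new work: given two classes $[\widetilde{c(\alpha)}]$ and $[\widetilde{c(\beta)}]$ with associated pairs $(g,\sigma)$ and $(h,\tau)$, the explicit construction of the composition $[\widetilde{c(\alpha)}]\ast[\widetilde{c(\beta)}]=[\widetilde{c(\gamma)}]$ carried out just before Remark~\ref{pair operation} yields $\gamma(1)=gh_{\sigma}\mathbf{x}_{\sigma\tau}$, so the pair attached to the product is exactly $(gh_{\sigma},\sigma\tau)=(g,\sigma)\cdot(h,\tau)$ in $G^{\times n}\rtimes_{\varphi}\Sigma_n$. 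Thus $\Phi$ respects the operations, and I would open the proof by recording this observation.

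For surjectivity, I would first check that the point $g\mathbf{x}_{\sigma}$ lies in $F_G(M,n)$. This is immediate: if $i\neq j$, then $G(g_i x_{\sigma(i)})=G(x_{\sigma(i)})$ and $G(g_j x_{\sigma(j)})=G(x_{\sigma(j)})$, and these are disjoint because $\mathbf{x}=(x_1,\ldots,x_n)\in F_G(M,n)$ and $\sigma(i)\neq\sigma(j)$. I would then invoke the path-connectedness of $F_G(M,n)$, which follows from the standing hypothesis that $M$ is a connected topological manifold of dimension at least two together with the finiteness of $G$ (the discussion at the start of Section~\ref{theory} already asserts this). Hence there exists a path $\alpha\colon I\longrightarrow F_G(M,n)$ with $\alpha(0)=\mathbf{x}$ and $\alpha(1)=g\mathbf{x}_{\sigma}$.

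By Lemma~\ref{unique}, the pair determined by the class $[\widetilde{c(\alpha)}]$ is read off directly from the endpoint of $\alpha$, giving $\Phi([\widetilde{c(\alpha)}])=(g,\sigma)$. Since $(g,\sigma)$ was arbitrary, $\Phi$ is surjective, completing the proof. The only conceptual point that is not purely formal is the path-connectedness of $F_G(M,n)$, and I do not expect that to be a serious obstacle here since it is an input stated earlier; a short parenthetical justification (remove the finitely many orbits $G(x_j)$, $j\neq i$, from $M$ to connect the $i$-th coordinate, using $\dim M\geq 2$) could be added if the reader wants a self-contained argument.
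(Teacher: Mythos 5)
Your proposal is correct and follows essentially the same route as the paper: the paper's proof likewise takes an arbitrary pair $(g,\sigma)$, uses the (path-)connectedness of $F_G(M,n)$ to produce a path from $\mathbf{x}$ to $g\mathbf{x}_{\sigma}$, and reads off $\Phi([\widetilde{c(\alpha)}])=(g,\sigma)$ via the earlier results on the uniquely determined pair. Your additional checks (that $g\mathbf{x}_{\sigma}$ lies in $F_G(M,n)$ and that $\Phi$ respects the semidirect-product operation, via the computation preceding Remark~\ref{pair operation}) are sound and only make explicit what the paper leaves implicit.
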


\begin{proof}
Given a pair $(g, \sigma)$ in $G^{\times n}\rtimes_{\varphi}\Sigma_n$, since $F_G(M,n)$ is connected (so it also is path-connected), there must be a path
 $\alpha: I\longrightarrow F_G(M,n)$ such that $\alpha(0)={\bf x}$ and $\alpha(1)=g{\bf x}_{\sigma}$, which gives $\Phi([\widetilde{c(\alpha)}])=(g, \sigma)$. Furthermore, it follows from Corollary~\ref{one-one} or Theorem~\ref{group'} that $\Phi$ is an epimorphism.
\end{proof}

Based upon the Definition~\ref{subgroup-def}, when $\Phi$ is restricted to $\mathcal{P}_n^{orb}(M, G)$, each class $[\widetilde{c(\alpha)}]$ will uniquely determine the pair
$(g, e_{\Sigma_n})$ where $e_{\Sigma_n}$ is the unit element of $\Sigma_n$. Thus, $\Phi$ induces a homomorphism
$$\Phi_G: \mathcal{P}_n^{orb}(M, G)\longrightarrow G^{\times n}$$
given by $\Phi_G([\widetilde{c(\alpha)}])=g$, which is surjective.
When $\Phi$ is restricted to $\mathcal{B}_n(M,G)$,  each class $[\widetilde{c(\alpha)}]$ will uniquely determine the pair
$(e_{G^{\times n}}, \sigma)$ where $e_{G^{\times n}}$ is the unit element of $G^{\times n}$. So  $\Phi$ induces a homomorphism
 $$\Phi_\Sigma: \mathcal{B}_n(M,G)\longrightarrow\Sigma_n$$
 which  is also an epimorphism.

 \vskip .2cm
 An observation shows that each of kernels $\Ker\Phi$, $\Ker \Phi_G$ and $\Ker \Phi_\Sigma$ is just the pure braid group $\mathcal{P}_n(M,G)$.

 \vskip .2cm
 On the other hand, there are two  natural projections $p_\Sigma: G^{\times n}\rtimes_{\varphi}\Sigma_n\longrightarrow\Sigma_n$ and
 $p_G: G^{\times n}\rtimes_{\varphi}\Sigma_n\longrightarrow G^{\times n}$, which give two maps
  $$p_\Sigma\circ \Phi: \mathcal{B}_n^{orb}(M, G)\longrightarrow\Sigma_n$$
  and
  $$p_G\circ \Phi: \mathcal{B}_n^{orb}(M, G)\longrightarrow G^{\times n}.$$
We see by Lemma~\ref{epi} that such two maps are still surjective because $F_G(M,n)$ is path-connected. In addition, it is easy to see that   $\Ker p_\Sigma\circ \Phi=\mathcal{P}_n^{orb}(M,G)$, and $\Ker p_G\circ \Phi=\mathcal{B}_n(M,G)$. However, we note that $p_G\circ \Phi$
is not a group homomorphism since $p_G$ is not a group homomorphism, and  $p_\Sigma\circ \Phi$ is still a group homomorphism.

\vskip .2cm

Together with all arguments above, we have

\begin{theorem} \label{main result}
The following diagram commutes and contains five short exact sequences.
$$
\xymatrix{
1 \ar[dr] & & & & 1\\
 & \mathcal{P}_n^{orb}(M,G)    \ar[rr]^{\Phi_G}  \ar[dr] &   & G^{\times n} \ar[ur]  &    \\
1 \ar[r] & \mathcal{P}_n(M,G) \ar[u] \ar[d] \ar[r]  & \mathcal{B}_n^{orb}(M,G) \ar[ur] \ar[r]^{\Phi} \ar[dr] & G^{\times n}\rtimes_{\varphi}\Sigma_n
\ar[u]_{p_G} \ar[r] \ar[d]^{p_\Sigma} & 1 \\
& \mathcal{B}_n(M,G) \ar[rr]^{\Phi_\Sigma}  \ar[ur] & & \Sigma_n \ar[dr] &   \\
1 \ar[ur] &&&& 1
}
$$
\end{theorem}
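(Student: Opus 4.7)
The plan is to dispose of each of the five short exact sequences in turn, drawing on Lemma~\ref{epi}, Lemma~\ref{unique} and Corollary~\ref{homotopy version}. Three of the sequences are relatively straightforward: the middle horizontal
\[
1 \to \mathcal{P}_n(M,G) \to \mathcal{B}_n^{orb}(M,G) \xrightarrow{\Phi} G^{\times n} \rtimes_\varphi \Sigma_n \to 1,
\]
together with the two ``angular'' ones obtained by restricting $\Phi$:
\[
1 \to \mathcal{P}_n(M,G) \to \mathcal{P}_n^{orb}(M,G) \xrightarrow{\Phi_G} G^{\times n} \to 1,
\qquad
1 \to \mathcal{P}_n(M,G) \to \mathcal{B}_n(M,G) \xrightarrow{\Phi_\Sigma} \Sigma_n \to 1.
\]
For each I would verify surjectivity and identify the kernel. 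Surjectivity of $\Phi$ is Lemma~\ref{epi}; surjectivity of $\Phi_G$ and $\Phi_\Sigma$ follows by the same argument, since for any $g\in G^{\times n}$ or $\sigma\in \Sigma_n$ one finds a path in the connected space $F_G(M,n)$ from ${\bf x}$ to $g{\bf x}$ or to ${\bf x}_\sigma$. In each of these three cases a class $[\widetilde{c(\alpha)}]$ lies in the kernel iff its associated pair from Lemma~\ref{unique} is trivial, which by Definition~\ref{subgroup-def}(3) and Corollary~\ref{homotopy version}(3) is equivalent to being an element of $\mathcal{P}_n(M,G)\cong\pi_1(F_G(M,n),{\bf x})$.

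Next I would treat the diagonal sequence
\[
1 \to \mathcal{P}_n^{orb}(M,G) \to \mathcal{B}_n^{orb}(M,G) \xrightarrow{p_\Sigma\circ\Phi} \Sigma_n \to 1.
\]
Here $p_\Sigma\circ\Phi$ is an honest group homomorphism because $p_\Sigma$ is; surjectivity comes for free from surjectivity of $\Phi$; and the kernel is exactly the set of classes whose associated pair has the form $(g,e_{\Sigma_n})$, which by Definition~\ref{subgroup-def}(1) is $\mathcal{P}_n^{orb}(M,G)$.

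The remaining complementary sequence
\[
1 \to \mathcal{B}_n(M,G) \to \mathcal{B}_n^{orb}(M,G) \xrightarrow{p_G\circ\Phi} G^{\times n} \to 1
\]
is the main obstacle, because $p_G$ fails to be a group homomorphism on the semidirect product. I expect the statement to be interpreted as exactness of pointed sets: $p_G\circ\Phi$ is still a surjective map of sets by the argument used for $\Phi_G$, and its fibre over $e_{G^{\times n}}$ is $\Phi^{-1}(\Sigma_n) = \mathcal{B}_n(M,G)$ by Definition~\ref{subgroup-def}(2). One should flag that $\mathcal{B}_n(M,G)$ is in general not normal in $\mathcal{B}_n^{orb}(M,G)$, mirroring the non-normality of $\Sigma_n$ in $G^{\times n}\rtimes_\varphi\Sigma_n$. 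Commutativity of the full diagram is then a bookkeeping exercise: each composition along any path in the diagram sends $[\widetilde{c(\alpha)}]$ to a component of its unique pair $(g,\sigma)$ from Lemma~\ref{unique}, while the projections $p_G,p_\Sigma$ extract the same components from $(g,\sigma)$, so every triangle and square commutes by construction.
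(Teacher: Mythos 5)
Your proposal is correct and follows essentially the same route as the paper: the paper likewise builds the five sequences from the epimorphism $\Phi$ of Lemma~\ref{epi}, its restrictions $\Phi_G$ and $\Phi_\Sigma$, and the composites $p_\Sigma\circ\Phi$ and $p_G\circ\Phi$, identifying each kernel via the unique pair of Lemma~\ref{unique} and Definition~\ref{subgroup-def}. Your caveat that $p_G\circ\Phi$ is only a surjection of pointed sets (so that the sequence $1\to\mathcal{B}_n(M,G)\to\mathcal{B}_n^{orb}(M,G)\to G^{\times n}\to 1$ is exact only in that weaker sense) is exactly the observation the authors record in the remark following the theorem.
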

\begin{remark} We note that because the map $\mathcal{B}_n^{orb}(M,G) \longrightarrow  G^{\times n}$ is not a group homomorphism,
$$1\longrightarrow \mathcal{B}_n(M,G) \longrightarrow \mathcal{B}_n^{orb}(M,G) \longrightarrow  G^{\times n}\longrightarrow 1$$
is not an exact sequence in the sense that all maps must be group homomorphisms.  However, it can still be regarded as  an exact sequence  in the sense of Switzer for topological spaces \cite{S}.
\end{remark}
\subsection{The action of $G^{\times n}\rtimes_\varphi \Sigma_n $ on  $F_G(M,n)$}

There is a canonical action of $G^{\times n}\rtimes_\varphi \Sigma_n $ on the orbit configuration space $F_G(M,n)$:
$$\Gamma:(G^{\times n}\rtimes_\varphi \Sigma_n) \times F_G(M,n) \longrightarrow F_G(M,n)$$
defined by $((g, \sigma), y)\longmapsto gy_\sigma$. Let $p: F_G(M,n)\longrightarrow F_G(M,n)/G^{\times n}\rtimes_\varphi \Sigma_n$ denote the orbit projection of this action.

\vskip .2cm
The action $\Gamma$ restricted to subgroups $G^{\times n}$ and $\Sigma_n$ gives the actions
$$\Gamma_G: G^{\times n}\times F_G(M,n)\longrightarrow F_G(M,n)$$
defined by $(g, y)\longmapsto gy$, and
$$\Gamma_\Sigma: \Sigma_n\times F_G(M,n)\longrightarrow F_G(M,n)$$
defined by $(\sigma, y)\longmapsto y_\sigma$, respectively.
Then we obtain two  orbit projections corresponding to actions $\Gamma_G$ and $\Gamma_\Sigma$:
$$p^G: F_G(M,n)\longrightarrow F_G(M,n)/G^{\times n}=F(M/G,n)$$
and
$$p^\Sigma: F_G(M,n)\longrightarrow F_G(M,n)/\Sigma_n.$$
There is the following commutative diagram with respect to the above orbit projections:
\begin{equation}\label{diagram}
\xymatrix{
                &          F_G(M,n) \ar[dl]_{p^G} \ar[dd]^{p} \ar[dr]^{p^\Sigma}  &  \\
  F_G(M,n)/G^{\times n} \ar[dr] &  & F_G(M,n)/\Sigma_n \ar[dl]\\
          &    F_G(M,n)/G^{\times n}\rtimes_\varphi \Sigma_n & \\
             }
\end{equation}

First let us look at the action $\Gamma_\Sigma$. It is easy to see that this action is free.

\begin{proposition}\label{bf}
The braid group $\mathcal{B}_n(M,G)$ is isomorphic to  $\pi_1(F_G(M,n)/\Sigma_n, p^\Sigma({\bf x}))$.
\end{proposition}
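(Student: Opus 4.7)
The plan is to exploit Corollary~\ref{homotopy version}(2), which identifies $\mathcal{B}_n(M,G)$ with the extended fundamental group $\pi_1^E(F_G(M,n), {\bf x}, \Sigma_n({\bf x}))$, and then to show that this extended fundamental group is naturally isomorphic to $\pi_1(F_G(M,n)/\Sigma_n, p^\Sigma({\bf x}))$ by using that $p^\Sigma$ is a covering projection. The key observation is that the action $\Gamma_\Sigma$ of $\Sigma_n$ on $F_G(M,n)$ is free (it is the restriction of the standard permutation action to the orbit configuration space), and $\Sigma_n$ is finite, so $p^\Sigma$ is a regular covering with deck transformation group $\Sigma_n$, and the fiber over $p^\Sigma({\bf x})$ is precisely $\Sigma_n({\bf x}) = \{{\bf x}_\sigma \mid \sigma \in \Sigma_n\}$.

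I would define a map
$$\Psi:\pi_1^E(F_G(M,n), {\bf x}, \Sigma_n({\bf x}))\longrightarrow\pi_1(F_G(M,n)/\Sigma_n, p^\Sigma({\bf x}))$$
by $[\alpha]\longmapsto [p^\Sigma\circ\alpha]$. Since $\alpha$ starts at ${\bf x}$ and ends at some ${\bf x}_\sigma$ in the same $\Sigma_n$-orbit, $p^\Sigma\circ\alpha$ is indeed a loop at $p^\Sigma({\bf x})$, and homotopies relative to $\partial I$ in $F_G(M,n)$ descend to homotopies in the quotient, so $\Psi$ is well-defined. To check that $\Psi$ is a homomorphism, recall from (\ref{op}) that for $[\alpha], [\beta]\in\pi_1^E(F_G(M,n), {\bf x}, \Sigma_n({\bf x}))$ the pair determined by $[\alpha]$ has the form $(e_{G^{\times n}}, \sigma)$, and hence $[\alpha]\bullet[\beta]=[\alpha\circ\beta_\sigma]$. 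Since $\beta_\sigma=\sigma\cdot\beta$ is in the same $\Sigma_n$-orbit as $\beta$, one has $p^\Sigma\circ\beta_\sigma=p^\Sigma\circ\beta$, so
$$\Psi([\alpha]\bullet[\beta])=[p^\Sigma\circ(\alpha\circ\beta_\sigma)]=[p^\Sigma\circ\alpha]\cdot[p^\Sigma\circ\beta]=\Psi([\alpha])\cdot\Psi([\beta]).$$

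Surjectivity of $\Psi$ follows from the path-lifting property of the covering $p^\Sigma$: given any loop $\gamma$ in $F_G(M,n)/\Sigma_n$ at $p^\Sigma({\bf x})$, lift it to a path $\widetilde{\gamma}$ in $F_G(M,n)$ starting at ${\bf x}$; then $\widetilde{\gamma}(1)$ lies in the fiber $(p^\Sigma)^{-1}(p^\Sigma({\bf x}))=\Sigma_n({\bf x})$, so $[\widetilde{\gamma}]\in\pi_1^E(F_G(M,n), {\bf x}, \Sigma_n({\bf x}))$ and $\Psi([\widetilde{\gamma}])=[\gamma]$. For injectivity, if $p^\Sigma\circ\alpha$ is null-homotopic rel $\partial I$ via some $H$ in the base, apply the homotopy-lifting property for covering maps to lift $H$ to a homotopy $\widetilde{H}$ in $F_G(M,n)$ starting at $\alpha$ and keeping the starting point at ${\bf x}$; the endpoint $\widetilde{H}(1,t)$ traces a path in the discrete fiber and is therefore constant, so $\widetilde{H}$ is a homotopy rel $\partial I$ showing $[\alpha]$ is trivial. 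Combined with Corollary~\ref{homotopy version}(2), this gives the claimed isomorphism $\mathcal{B}_n(M,G)\cong\pi_1(F_G(M,n)/\Sigma_n, p^\Sigma({\bf x}))$.

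The only mildly subtle point, and the place I would be most careful, is the compatibility of the operation $\bullet$ on $\pi_1^E$ with the standard concatenation of loops in the quotient. As noted above, the twist by $\sigma$ built into $\bullet$ disappears after applying $p^\Sigma$ precisely because $\beta$ and $\beta_\sigma$ lie in the same $\Sigma_n$-orbit, so once this is observed the homomorphism property is immediate; the bijectivity is then a direct application of the path- and homotopy-lifting properties of the finite regular covering $p^\Sigma$.
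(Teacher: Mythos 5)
Your proposal is correct and follows essentially the same route as the paper: the paper also first invokes Corollary~\ref{homotopy version}(2) to identify $\mathcal{B}_n(M,G)$ with $\pi_1^E(F_G(M,n), {\bf x}, \Sigma_n({\bf x}))$ and then asserts that $p^\Sigma_*$ is an isomorphism ``by the theory of covering spaces.'' You have merely made explicit the covering-space details (freeness of $\Gamma_\Sigma$, the homomorphism check absorbing the $\sigma$-twist in $\bullet$, and the path- and homotopy-lifting arguments) that the paper leaves implicit.
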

\begin{proof}
We see from Corollary~\ref{homotopy version} that $\mathcal{B}_n(M,G)\cong \pi_1^E(F_G(M,n), {\bf x},  \Sigma_n({\bf x}))$.
Then the projection $p^\Sigma$ induces the map $$p^\Sigma_*: \pi_1^E(F_G(M,n), {\bf x},  \Sigma_n({\bf x})) \longrightarrow \pi_1(F_G(M,n)/\Sigma_n, p^\Sigma({\bf x})),$$ which is an isomorphism by the theory of covering spaces.
\end{proof}

\begin{remark}
Corollary~\ref{homotopy version}, with Proposition~\ref{bf} together,  tell us that  the  short exact sequence in Theorem~\ref{main result}
 $$1\longrightarrow \mathcal{P}_n(M,G)\longrightarrow \mathcal{B}_n(M,G)\longrightarrow\Sigma_n\longrightarrow 1$$
  geometrically corresponds to the
 short exact sequence
$$1\longrightarrow \pi_1(F_G(M,n), {\bf x})\longrightarrow \pi_1(F_G(M,n)/\Sigma_n, p^\Sigma({\bf x}))\longrightarrow\Sigma_n\longrightarrow 1$$
given by the fibration $F_G(M,n)\longrightarrow F_G(M,n)/\Sigma_n$ with fiber $\Sigma_n$.
\end{remark}

\vskip .2cm
For two elements $((g_1,\dots,g_n), \sigma)$ and $((h_1,\dots,h_n),\tau)$ in
$G^{\times n}\rtimes_\varphi \Sigma_n $, if $$(g_1y_{\sigma(1)},\dots,g_ny_{\sigma(n)})= (h_1y_{\tau(1)},\dots,h_ny_{\tau(n)})$$
then it is easy to see that $\sigma=\tau$ and $g_i^{-1}h_i\in G_{y_{\sigma(i)}}, i=1, ..., n$, where $G_{y_{\sigma(i)}}$ is the isotropy subgroup at $y_{\sigma(i)}\in M$ of $G$. Thus,  in general, this action $\Gamma$ is non-free.
The following fact is obvious.
\begin{lemma}\label{free action}
The action of $G$ on $M$ is free if and only if the action $\Gamma$ is free.
\end{lemma}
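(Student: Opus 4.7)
The plan is to read off both directions from the componentwise computation that immediately precedes the lemma. That calculation shows that $(g,\sigma)\cdot y = (h,\tau)\cdot y$ forces $\sigma=\tau$ and $g_i^{-1}h_i \in G_{y_{\sigma(i)}}$ for each $i$; applying this with $(h,\tau)=(e_{G^{\times n}},\mathrm{id})$ reduces the freeness of $\Gamma$ to the vanishing of the stabilizers of the coordinates of each $y\in F_G(M,n)$.

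For the ``only if'' direction, I would argue by contrapositive. Assume the action of $G$ on $M$ is not free, so there exist $x\in M$ and $g_0\in G\setminus\{e\}$ with $g_0 x = x$. The goal is to promote $x$ to a point of $F_G(M,n)$ whose first coordinate is $x$. Since $G$ is finite and $M$ is a connected manifold of dimension $\geq 2$, $G(x)$ is a finite subset of $M$ and $M\setminus G(x)$ is path-connected and open; iterating, I can choose $y_2,\dots,y_n\in M$ so that the orbits $G(x),G(y_2),\dots,G(y_n)$ are pairwise disjoint, giving $y=(x,y_2,\dots,y_n)\in F_G(M,n)$. Then the nontrivial element $\bigl((g_0,e,\dots,e),\mathrm{id}\bigr)$ of $G^{\times n}\rtimes_\varphi \Sigma_n$ fixes $y$, so $\Gamma$ is not free.

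For the ``if'' direction, suppose $G$ acts freely on $M$, so every isotropy group $G_x$ is trivial. If $\Gamma\bigl((g,\sigma),y\bigr) = y$ for some $y\in F_G(M,n)$, then componentwise $g_i y_{\sigma(i)} = y_i$, whence $y_i\in G(y_i)\cap G(y_{\sigma(i)})$. By definition of $F_G(M,n)$ this forces $\sigma(i)=i$ for all $i$, hence $\sigma=\mathrm{id}$; then $g_i y_i = y_i$ together with triviality of the isotropy gives $g_i=e$ for every $i$, so $(g,\sigma)$ is the identity of $G^{\times n}\rtimes_\varphi \Sigma_n$. Hence $\Gamma$ is free.

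Neither direction involves any serious obstacle; the only point that requires a little care is the construction of the auxiliary configuration $y\in F_G(M,n)$ in the first direction, which relies on the dimension hypothesis on $M$ and the finiteness of $G$ to guarantee that $F_G(M,n)$ contains a point with prescribed first coordinate.
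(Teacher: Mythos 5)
Your proof is correct and follows exactly the route the paper intends: the paper simply declares the lemma ``obvious'' after the componentwise computation showing $(g,\sigma)\cdot y=(h,\tau)\cdot y$ forces $\sigma=\tau$ and $g_i^{-1}h_i\in G_{y_{\sigma(i)}}$, and your two directions are just that observation spelled out. The only content you add beyond the paper is the (correct and worth recording) existence argument for a point of $F_G(M,n)$ with prescribed first coordinate, which uses only that orbits are finite and $M\setminus G(x)$ is nonempty.
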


 In a similar way to the proof of Proposition~\ref{bf}, making use of Theorem~\ref{main result} and Lemma~\ref{free action}
  via two homomorphisms $\Phi$ and $\Phi_G$ we see that

\begin{corollary} \label{free-1}
If the action of $G$ on $M$ is free, then $\mathcal{B}_n^{orb}(M,G)$ (resp. $\mathcal{P}_n^{orb}(M,G)$) is realizable as the fundamental group of the orbit space
$F_G(M,n)/G^{\times n}\rtimes_\varphi \Sigma_n$ (resp. $F_G(M,n)/G^{\times n}$), which is exactly $Br(M^{\times n}, G^{\times n}\rtimes_\varphi \Sigma_n)$ (resp. $Br(M^{\times n}, G^{\times n})$) in the sense of Vershinin.
\end{corollary}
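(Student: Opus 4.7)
The plan is to mimic the argument used for Proposition~\ref{bf}, replacing the free $\Sigma_n$-action by the larger action $\Gamma$ (resp.\ $\Gamma_G$), which becomes free by Lemma~\ref{free action}. By Theorem~\ref{group'} and Corollary~\ref{homotopy version}, it suffices to show that
\[
\pi_1^E(F_G(M,n), {\bf x}, {\bf x}^{orb}) \cong \pi_1\bigl(F_G(M,n)/(G^{\times n}\rtimes_\varphi \Sigma_n),\, p({\bf x})\bigr)
\]
and analogously for $\pi_1^E(F_G(M,n),{\bf x}, G^{\times n}({\bf x}))$ using $p^G$. The starting observation is that ${\bf x}^{orb} = \{g{\bf x}_\sigma \mid (g,\sigma)\in G^{\times n}\rtimes_\varphi \Sigma_n\}$ is precisely the $\Gamma$-orbit of ${\bf x}$, and $G^{\times n}({\bf x})$ is the $\Gamma_G$-orbit.

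By Lemma~\ref{free action}, when $G$ acts freely on $M$, the action $\Gamma$ (and hence its restriction $\Gamma_G$) is free. Since $G^{\times n}\rtimes_\varphi\Sigma_n$ is finite and acts freely on the connected, locally path-connected, semilocally simply connected space $F_G(M,n)$, the orbit projection $p$ is a regular covering map whose deck transformation group is $G^{\times n}\rtimes_\varphi\Sigma_n$. Now I would invoke the standard lifting theory: every loop in the quotient based at $p({\bf x})$ lifts uniquely to a path in $F_G(M,n)$ starting at ${\bf x}$ and ending at some point of $p^{-1}(p({\bf x})) = {\bf x}^{orb}$, and homotopies of loops lift to homotopies of paths rel~$\partial I$. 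Conversely, $p$ sends any path representing a class of $\pi_1^E(F_G(M,n),{\bf x},{\bf x}^{orb})$ to a loop at $p({\bf x})$. These constructions are mutually inverse, and one checks that they respect the concatenation operation $\bullet$ defined in (\ref{op}) because after the permutation $\sigma$ is applied, $g\beta_\sigma$ is precisely the lift of $p\circ\beta$ beginning at $\alpha(1)$. This yields the desired isomorphism, and the identical argument with $p^G$ in place of $p$ handles $\mathcal{P}_n^{orb}(M,G)$.

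For the Vershinin identification, I need to verify that $F_G(M,n)$ coincides with the free orbit type locus $Y_{\mathbb{G}}$ inside $\mathbb{M} = M^{\times n}$ under $\mathbb{G} = G^{\times n}\rtimes_\varphi \Sigma_n$. A point $y = (y_1,\dots,y_n)$ is fixed by $(g,\sigma)\in\mathbb{G}$ iff $g_i y_{\sigma(i)} = y_i$ for all $i$. If $\sigma = e$, freeness of $G\curvearrowright M$ forces $g = e_{G^{\times n}}$. If $\sigma \ne e$, choose $i$ with $\sigma(i) \ne i$; then $y_i \in G(y_{\sigma(i)})$, i.e., $G(y_i)\cap G(y_{\sigma(i)}) \ne \emptyset$. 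Hence the stabilizer is trivial precisely when all orbits $G(y_1),\dots,G(y_n)$ are pairwise disjoint, which is the definition of $F_G(M,n)$. Thus $Y_{\mathbb{G}} = F_G(M,n)$ and $X_{\mathbb{G}} = F_G(M,n)/\mathbb{G}$, giving $Br(M^{\times n}, G^{\times n}\rtimes_\varphi \Sigma_n) = \pi_1(X_{\mathbb{G}}) \cong \mathcal{B}_n^{orb}(M,G)$; restricting to $G^{\times n}$ gives the analogous statement for $\mathcal{P}_n^{orb}(M,G)$.

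The only potentially subtle step is compatibility of the concatenation operation $\bullet$ with the fundamental group product under lifting; the routine covering space considerations and the orbit type computation are straightforward.
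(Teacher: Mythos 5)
Your proposal is correct and follows essentially the same route as the paper: the paper's proof is a one-line appeal to the argument of Proposition~\ref{bf} (identify the orbit braid group with the extended fundamental group via Theorem~\ref{group'}, use Lemma~\ref{free action} to see that $\Gamma$ is free so $p$ is a regular covering, and conclude by lifting theory), which is exactly what you do; your explicit verification that the free-orbit-type locus $Y_{\mathbb{G}}$ in $M^{\times n}$ equals $F_G(M,n)$ is a detail the paper leaves implicit. One caution about your final clause: for the subgroup $G^{\times n}$ alone, when $G$ acts freely on $M$ the free-orbit-type locus of $M^{\times n}$ is \emph{all} of $M^{\times n}$ rather than $F_G(M,n)$, so ``restricting to $G^{\times n}$'' does not literally reproduce Vershinin's $Y_{G^{\times n}}$, and $Br(M^{\times n},G^{\times n})$ read strictly would be $\pi_1((M/G)^{\times n})$ rather than $\pi_1(F_G(M,n)/G^{\times n})$; the covering-space part of your argument giving $\mathcal{P}_n^{orb}(M,G)\cong\pi_1(F_G(M,n)/G^{\times n})$ is unaffected, and this wrinkle is inherited from the paper's own statement rather than introduced by you.
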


\subsection{Liftings of paths}

For the projection $p^G: F_G(M,n)\longrightarrow F(M/G,n)$, write $p^G=(p^G_1, ..., p^G_n)$ and $\overline{\bf x}=p^G({\bf x})$. Given a $\sigma\in \Sigma_n$, consider the path
$\overline{\alpha}=(\overline{\alpha}_1, ..., \overline{\alpha}_n): I\longrightarrow F(M/G,n)$ from $\overline{\bf x}$ to $\overline{\bf x}_{\sigma}$, we then have
$$(p^G)^{-1}(\overline{\alpha}(I))=((p^G_1)^{-1}(\overline{\alpha}_1(I)), ..., (p^G_n)^{-1}(\overline{\alpha}_n(I))).$$
Since $G$ is finite, it is not difficult to see that there must be at least $|G|^n$ path liftings $\alpha: I\longrightarrow F_G(M,n)$
$$\xymatrix{
                &          F_G(M,n) \ar[d]^{p^G}     \\
  I \ar[ur]^{\alpha}  \ar[r]^{\overline{\alpha}} &    \ \ \ \ \ F(M/G, n)           }
$$
such that
$$G^{\times n}(\alpha(I))=(p^G)^{-1}(\overline{\alpha}(I))=((p^G_1)^{-1}(\overline{\alpha}_1(I)), ..., (p^G_n)^{-1}(\overline{\alpha}_n(I))).$$
In particular, it is not difficult to see that there must be path liftings $\alpha$ with $\alpha(0)={\bf x}$ of $\overline{\alpha}$, so
$$[\alpha]\in \pi_1^E(F_G(M,n), {\bf x}, G^{\times n}({\bf x}_\sigma))\subset \pi_1^E(F_G(M,n), {\bf x}, {\bf x}^{orb}).$$  Let $\pi_1^E(F(M/G, n), \overline{{\bf x}}, \overline{\bf x}_{\sigma})$ denote the set of homotopy classes relative to $\partial I$ of all paths $\overline{\alpha}$ with $\overline{\alpha}(0)=\overline{{\bf x}}$ and $\overline{\alpha}(1)= \overline{\bf x}_{\sigma}$. We can use the same way as in (\ref{op}) to give an operation $\star$ on $\pi_1^E(F(M/G, n), \overline{{\bf x}}, \overline{\bf x}_{\sigma})$ by
$$[\overline{\alpha}]\star[\overline{\beta}]=[\overline{\alpha}\circ \overline{\beta}_\sigma].$$
So $\pi_1^E(F(M/G, n), \overline{{\bf x}}, \overline{\bf x}_{\sigma})$ is also a group under such operation, still called the extended fundamental group.
Then we have that

\begin{lemma} \label{sur}
 The projection $p^G:F_G(M,n)\longrightarrow F(M/G,n)$ induces an epimorphism
$$p^G_*:  \pi_1^E(F_G(M,n), {\bf x}, G^{\times n}({\bf x}_\sigma))\longrightarrow \pi_1^E(F(M/G, n), \overline{{\bf x}}, \overline{\bf x}_{\sigma})$$
by $p^G_*([\alpha])=[p^G\alpha]$.
\end{lemma}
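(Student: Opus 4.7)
The plan is to check in turn that $p^G_*$ (i) is well-defined and lands in the stated target, (ii) intertwines the operations $\bullet$ and $\star$, and (iii) is surjective, with (iii) being where the real work lies. Steps (i) and (ii) are formal. For (i), any path $\alpha$ representing a class in $\pi_1^E(F_G(M,n), {\bf x}, G^{\times n}({\bf x}_\sigma))$ satisfies $\alpha(0) = {\bf x}$ and $\alpha(1) = g{\bf x}_\sigma$ for some $g \in G^{\times n}$; since $p^G$ collapses each $G$-orbit to a point, $p^G\alpha$ starts at $\overline{\bf x}$ and ends at $p^G(g{\bf x}_\sigma) = \overline{\bf x}_\sigma$. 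A homotopy $H$ rel $\partial I$ between two such representatives pushes forward to $p^G \circ H$, which is again rel $\partial I$ because $H$ is constant on $\{0,1\} \times I$. For (ii), the identities $p^G(g\beta_\sigma) = (p^G\beta)_\sigma$ (using $G^{\times n}$-invariance of $p^G$ together with its equivariance under coordinate permutations) and $p^G(\alpha \circ \gamma) = p^G\alpha \circ p^G\gamma$ combine to give $p^G_*([\alpha]\bullet[\beta]) = [p^G\alpha \circ (p^G\beta)_\sigma] = p^G_*([\alpha]) \star p^G_*([\beta])$.

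For (iii), fix a class $[\overline{\alpha}] \in \pi_1^E(F(M/G,n), \overline{\bf x}, \overline{\bf x}_\sigma)$ and write $\overline{\alpha} = (\overline{\alpha}_1, \ldots, \overline{\alpha}_n)$ with $\overline{\alpha}_i(0) = \overline{x}_i$ and $\overline{\alpha}_i(1) = \overline{x}_{\sigma(i)}$. Working coordinate by coordinate, lift each $\overline{\alpha}_i$ to a path $\alpha_i : I \to M$ with $\alpha_i(0) = x_i$ and $p\alpha_i = \overline{\alpha}_i$, where $p : M \to M/G$ is the quotient. The tuple $\alpha = (\alpha_1, \ldots, \alpha_n)$ automatically lies in $F_G(M,n)$ because for any $s$ and $i \neq j$, the inequality $\overline{\alpha}_i(s) \neq \overline{\alpha}_j(s)$ in $M/G$ is literally the assertion $G(\alpha_i(s)) \cap G(\alpha_j(s)) = \emptyset$. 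Moreover, $p^G(\alpha(1)) = \overline{\bf x}_\sigma$ forces $\alpha(1) \in (p^G)^{-1}(\overline{\bf x}_\sigma) = G^{\times n}({\bf x}_\sigma)$, so the class $[\alpha]$ sits in the source and satisfies $p^G_*([\alpha]) = [\overline{\alpha}]$ by construction.

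The only substantive obstacle is the path-lifting input used in (iii): $p$ fails to be a covering map whenever the $G$-action on $M$ has fixed points, so ordinary covering-space theory does not apply directly. The remedy is the standard fact that a quotient by a finite (hence proper) group action on a manifold admits path lifting through any prescribed preimage of the starting point: a slice or $G_x$-invariant chart near each $x \in M$ produces a local lift, and compactness of $I$ allows the local lifts to be spliced into a continuous global lift. Uniqueness of the lift fails in general, but only existence is required here, and the paragraph immediately preceding the lemma already announces existence of at least $|G|^n$ such lifts. Granting this, the remainder of the argument is a routine diagram chase.
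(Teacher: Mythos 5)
Your proposal is correct and follows essentially the same route as the paper: the paper offers no separate proof of the lemma, relying instead on the paragraph immediately preceding it, which asserts the existence of coordinatewise path liftings of $\overline{\alpha}$ through the prescribed starting point ${\bf x}$ (the non-covering nature of $p^G$ notwithstanding), and surjectivity follows exactly as you describe. Your additional verifications of well-definedness and compatibility with the operations $\bullet$ and $\star$, and your justification of path lifting via slices and compactness of $I$, only make explicit what the paper leaves implicit.
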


\begin{remark}
Set
$$\mathcal{S}((p^G)^{-1}\overline{\alpha})=\coprod_{i=1}^n \{((p^G)^{-1}_i(\overline{\alpha}_i(s)), s)|s\in I\},$$
which is a $G$-invariant subset in $M\times I$, where the action of $G$ on $I$ is trivial. By Corollary~\ref{one-one},
generally $\mathcal{S}((p^G)^{-1}\overline{\alpha})$ is not an orbit braid in $\mathcal{B}_n^{orb}(M,G)$. Indeed, for any path lifting $\alpha$ of $\overline{\alpha}$, as $G$-invariant subsets of $M\times I$, $\widetilde{c(\alpha)}$ is the same as $\mathcal{S}((p^G)^{-1}\overline{\alpha})$. However, $[\widetilde{c(\alpha)}]$ is uniquely determined by $[\alpha]$, but $\mathcal{S}((p^G)^{-1}\overline{\alpha})$ is not so. In fact, it is possible that there are two path liftings $\alpha$ and $\alpha'$
such that $\alpha(0)=\alpha'(0)$ and $\alpha\not\simeq\alpha'$ rel $\partial I$, so $[\widetilde{c(\alpha)}]\not=[\widetilde{c(\alpha')}]$ but as sets $[\widetilde{c(\alpha)}]=[\widetilde{c(\alpha')}]=\mathcal{S}((p^G)^{-1}\overline{\alpha})$.
\end{remark}

By Corollary~\ref{homotopy version} and  Lemma~\ref{sur},  we then conclude  two epimorphisms
$$\Psi_1: \mathcal{P}_n^{orb}(M,G) \longrightarrow\pi_1^E(F(M/G, n), \overline{\bf x}, \overline{\bf x})=\pi_1(F(M/G,n), \overline{\bf x})=P_n(M/G)$$
and
$$\Psi_2: \mathcal{B}_n^{orb}(M,G) \longrightarrow\pi_1^E(F(M/G, n)/\Sigma_n, \widehat{\overline{\bf x}}, \widehat{\overline{\bf x}})=\pi_1(F(M/G, n)/\Sigma_n, \widehat{\overline{\bf x}})=B_n(M/G)$$
where $\widehat{\overline{\bf x}}$ is the image of $\overline{\bf x}$ under the projection $F(M/G,n)\longrightarrow
F(M/G, n)/\Sigma_n$. Then, making use of Theorem~\ref{main result} again, this gives

\begin{proposition}\label{re-orbits}
There is an epimorphism between two short exact sequences.
\[
\begin{CD}
1@>>> \mathcal{P}_n^{orb}(M,G) @>I_3 >> \mathcal{B}_n^{orb}(M,G) @>>> \Sigma_n @>>> 1\\
@. @VVV   @V  VV  @ V= VV  \\
1@ >>> P_n(M/G) @ > >> B_n(M/G) @>>> \Sigma_n @>>> 1\\
\end{CD}
\]
\end{proposition}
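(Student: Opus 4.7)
The plan is to assemble four ingredients that are already essentially in hand: the two short exact sequences, the two vertical epimorphisms $\Psi_1$ and $\Psi_2$, and the commutativity of the two squares.

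First I would identify the top sequence as the one furnished by Theorem~\ref{main result}: the map $I_3$ is the natural inclusion $\mathcal{P}_n^{orb}(M,G)\hookrightarrow\mathcal{B}_n^{orb}(M,G)$ and the surjection onto $\Sigma_n$ is $p_\Sigma\circ\Phi$, which sends a class $[\widetilde{c(\alpha)}]$ to the unique permutation $\sigma$ determined by Lemma~\ref{unique}. The bottom sequence is the classical one for the base $M/G$: it is obtained from Proposition~\ref{bf} applied with trivial group, or equivalently from the fibration $F(M/G,n)\longrightarrow F(M/G,n)/\Sigma_n$ with discrete fiber $\Sigma_n$ (which is available because the $\Sigma_n$-action on $F(M/G,n)$ is free). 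The vertical maps $\Psi_1$ and $\Psi_2$ have already been constructed just above the proposition via the induced map $p^G_*$ of Lemma~\ref{sur}, and shown there to be surjective; the identity on $\Sigma_n$ serves as the third vertical arrow.

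Next I would verify that both squares commute. For the right square, the two downward routes from $\mathcal{B}_n^{orb}(M,G)$ to $\Sigma_n$ both read off the same permutation: if $\alpha(1)=g{\bf x}_\sigma$ then $p_\Sigma\circ\Phi([\widetilde{c(\alpha)}])=\sigma$, while $(p^G\alpha)(1)=p^G(g{\bf x}_\sigma)=\overline{\bf x}_\sigma$, so the image of $\Psi_2([\widetilde{c(\alpha)}])$ in $\Sigma_n$ is also $\sigma$. For the left square, if $[\widetilde{c(\alpha)}]\in\mathcal{P}_n^{orb}(M,G)$, so that $\alpha(1)\in G^{\times n}({\bf x})$, then $p^G\alpha$ is a loop at $\overline{\bf x}$, hence represents an element of $\pi_1(F(M/G,n),\overline{\bf x})=P_n(M/G)$; this is precisely $\Psi_1([\widetilde{c(\alpha)}])$, and its image under the inclusion $P_n(M/G)\hookrightarrow B_n(M/G)$ agrees with $\Psi_2\circ I_3([\widetilde{c(\alpha)}])$.

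The main obstacle, such as it is, is bookkeeping rather than genuine difficulty: one must confirm that $\Psi_1$ and $\Psi_2$ are actual group homomorphisms with respect to the operation $\bullet$ (equivalently $\star$), not merely surjections of pointed sets. This amounts to observing that $p^G$ intertwines the $G^{\times n}\rtimes_\varphi\Sigma_n$-action on $F_G(M,n)$ with the $\Sigma_n$-action on $F(M/G,n)$, so that $p^G(\alpha\circ(g\beta_\sigma))=(p^G\alpha)\circ(p^G\beta)_\sigma$; this makes $p^G_*$ respect the defining formulas for $\bullet$ and $\star$. Once this compatibility is in place, the exactness of both rows and the surjectivity of the three vertical maps together with the commutativity checks above complete the proof by a routine diagram chase.
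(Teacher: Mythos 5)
Your proposal is correct and follows essentially the same route as the paper: the top row is the short exact sequence $1\to\mathcal{P}_n^{orb}(M,G)\to\mathcal{B}_n^{orb}(M,G)\to\Sigma_n\to 1$ from Theorem~\ref{main result}, the vertical arrows are the epimorphisms $\Psi_1,\Psi_2$ induced by $p^G$ via Lemma~\ref{sur} together with the identity on $\Sigma_n$, and the bottom row is the classical sequence for $F(M/G,n)\to F(M/G,n)/\Sigma_n$. The paper states all of this in one line; your explicit checks of the commutativity of the two squares and of the compatibility $p^G(\alpha\circ(g\beta_\sigma))=(p^G\alpha)\circ(p^G\beta)_\sigma$ are precisely the details it leaves implicit.
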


Now let us consider the case in which the action of $G$ on $M$ is free. In this case, the projection $p^G: F_G(M,n)\longrightarrow
F(M/G,n)$ becomes a fibration with fiber $G^{\times n}$.
\begin{lemma}\label{free}
The following statements are equivalent.
\begin{itemize}
 \item[$(1)$]  The action of $G$ on $M$ is free.
 \item[$(2)$]  For any path $\overline{\alpha}: I\longrightarrow F(M/G,n)$, there are exactly $|G|^n$ path liftings of $\overline{\alpha}$.
 \item[$(3)$]  For any path $\overline{\alpha}: I\longrightarrow F(M/G,n)$ and any two path liftings $\alpha'$ and $\alpha''$ of $\overline{\alpha}$, $\widetilde{c(\alpha')}=\widetilde{c(\alpha'')}$.
\end{itemize}
\end{lemma}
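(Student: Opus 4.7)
My plan is to close the cycle $(1) \Rightarrow (2) \Rightarrow (1)$ together with $(1) \Rightarrow (3) \Rightarrow (1)$, yielding the three-way equivalence. The forward implications are a direct application of covering space theory, while both converses are proved by contraposition via explicit witness paths.

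For $(1) \Rightarrow (2)$ and $(1) \Rightarrow (3)$: assuming $G$ acts freely on $M$, the group $G^{\times n}$ acts freely on $F_G(M,n)$ (this is the content of Lemma~\ref{free action} applied to the subgroup $G^{\times n} \subset G^{\times n} \rtimes_\varphi \Sigma_n$). Since $G$ is finite, $M \to M/G$ is a finite regular covering map, and restricting the product $M^{\times n} \to (M/G)^{\times n}$ to the open subset $F_G(M,n)$ produces the covering $p^G: F_G(M,n) \to F(M/G,n)$ with deck group $G^{\times n}$ and fibers of cardinality $|G|^n$. Standard covering theory then yields $(2)$: for each starting point in the fiber over $\overline{\alpha}(0)$ there is a unique lift, giving $|G|^n$ lifts in total. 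It also gives $(3)$: any two lifts $\alpha', \alpha''$ of the same path $\overline{\alpha}$ differ by a deck transformation, so $\alpha'' = g\alpha'$ for some $g \in G^{\times n}$, and $\widetilde{c(\alpha'')} = \widetilde{c(g\alpha')} = \widetilde{c(\alpha')}$ by the invariance property noted before Lemma~\ref{h-class}.

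For $(2) \Rightarrow (1)$, I argue the contrapositive. If $G$ does not act freely on $M$, pick $y_1 \in M$ with non-trivial isotropy and choose $y_2, \ldots, y_n \in M$ in pairwise distinct $G$-orbits that are also distinct from $G(y_1)$, which is possible since $\dim M \geq 2$ and $G$ is finite. The constant path $\overline{\alpha}(s) \equiv (\overline{y}_1, \ldots, \overline{y}_n)$ in $F(M/G,n)$ has as its lifts precisely the constant paths at the points of $(p^G)^{-1}(\overline{\alpha}(0)) = \prod_i G(y_i)$, which has cardinality $\prod_i |G|/|G_{y_i}| < |G|^n$; so $(2)$ fails.

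For $(3) \Rightarrow (1)$, again by contraposition. Fix $y_1$ with a non-identity $h \in G_{y_1}$ and choose $y_2, \ldots, y_n$ in the free-orbit locus in pairwise distinct orbits disjoint from $G(y_1)$ (the free-orbit locus is open and dense since each fixed-point set of a non-identity element is a proper closed subset). Select a path $\beta: I \to M$ with $\beta(0),\beta(1)$ in the free locus, $\beta(1/2)=y_1$, and $\beta(s) \notin \bigcup_{i\geq 2} G(y_i)$, and define
\[
\beta^*(s) = \begin{cases} \beta(s) & 0 \leq s \leq \tfrac{1}{2}, \\ h\beta(s) & \tfrac{1}{2} \leq s \leq 1, \end{cases}
\]
which is continuous because $hy_1=y_1$. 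Then $\alpha' = (\beta, y_2, \ldots, y_n)$ and $\alpha'' = (\beta^*, y_2, \ldots, y_n)$ are two lifts of a common path in $F(M/G,n)$. If $\widetilde{c(\alpha')} = \widetilde{c(\alpha'')}$, then $\alpha'' = g\alpha'_\sigma$ for some $(g,\sigma) \in G^{\times n} \times \Sigma_n$; comparing the last $n-1$ coordinates (constants at free-orbit points) forces $\sigma = \mathrm{id}$ and $g_i=e$ for $i\geq 2$, reducing to $\beta^* = g_1 \beta$ on all of $I$; freeness on the arc $\beta([0,1/2))$ forces $g_1 = e$, contradicting $\beta^*(1) = h\beta(1) \neq \beta(1)$.

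The principal obstacle is the last step: splicing two lifts continuously at the singular time $s=1/2$ and then confirming they yield \emph{distinct} orbit braids as $G$-invariant subsets of $M \times I$. This rests on a careful placement of $\beta$ and the auxiliary points $y_i$ in the free locus so that the freeness forces $g_1=e$ in any hypothetical identification, while the splice via $h$ separates the endpoints in a free orbit and thus cannot be undone.
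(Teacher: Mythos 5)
Your proposal is correct, but it follows a genuinely different route from the paper's. The paper declares $(1)\Leftrightarrow(2)$ obvious and then closes the loop by proving $(2)\Rightarrow(3)$ and $(3)\Rightarrow(2)$ directly; the pivot of its converse direction is the assertion that any two liftings of $\overline{\alpha}$ differ by an element of $G^{\times n}$. You instead prove $(1)\Rightarrow(2)$ and $(1)\Rightarrow(3)$ by identifying $p^G$ as a regular $|G|^n$-fold covering with deck group $G^{\times n}$ when the action is free, and you prove both converses by contraposition with explicit witnesses: a constant path at a configuration containing a point of non-trivial isotropy (whose lifts are forced to be constant since the fibre is finite discrete, so there are $\prod_i |G|/|G_{y_i}|<|G|^n$ of them), and a pair of lifts spliced at a singular point via a non-trivial isotropy element $h$, which you show yield distinct orbit braids by a coordinate-by-coordinate comparison anchored at free-orbit points. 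What your approach buys is precision exactly where the paper is thin: the claim that two liftings differ by a group element is what \emph{fails} for non-free actions, and your witnesses make the two failure modes concrete (too few lifts for constant paths, branching lifts through singular points — the latter matching the paper's remark following the lemma). The paper's route is shorter. One small quibble: your justification that the free-orbit locus is dense (``each fixed-point set is a proper closed subset'') is insufficient as stated — properness and closedness do not preclude non-empty interior, and ruling that out needs something like Newman's theorem. But density is not actually required: the free locus is open, and its non-emptiness is already guaranteed by the paper's standing choice of a base point ${\bf x}$ of free orbit type, which gives you all the auxiliary free points and the path $\beta$ you need (using that $M$ minus finitely many orbits stays path-connected since $\dim M\geq 2$).
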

\begin{proof}
The equivalence of $(1)$ and $(2)$ is obvious. Assume that there are exactly $|G|^n$ path liftings of $\overline{\alpha}$.
Then we see that for each $1\leq i\leq n$, $(p^G)^{-1}\overline{\alpha}_i$ consists of $|G|$ path liftings of $\overline{\alpha}_i$, all of which do not intersect to each other. Furthermore, we have that for any two path liftings $\alpha'$ and $\alpha''$ of $\overline{\alpha}$, there is some $g\in G^{\times n}$ such that $\alpha'=g\alpha''$, so $\widetilde{c(\alpha')}=\widetilde{c(\alpha'')}$.

\vskip .2cm
Conversely, let $\alpha'$ and $\alpha''$ be two path liftings of $\overline{\alpha}$, and assume that $\widetilde{c(\alpha')}=\widetilde{c(\alpha'')}$. By Lemma~\ref{h-class} and Corollary~\ref{one-one}, without the loss of generality, we may assume that
$\alpha'(0)=\alpha''(0)={\bf x}$. Then $\widetilde{c(\alpha')}=\widetilde{c(\alpha'')}$ implies $\alpha'(1)=\alpha''(1)$. On the other hand, since  $\alpha'$ and $\alpha''$ are two path liftings of $\overline{\alpha}$, there should be some $g\in G^{\times n}$ such that $\alpha'=g\alpha''$.
Since $\alpha'(0)=\alpha''(0)={\bf x}$, this forces $g$ to be the unit element of $G^{\times n}$, so $\alpha'=\alpha''$.
This implies that $\{g\alpha'|g\in G^{\times n}\}$ gives all different path liftings of $\overline{\alpha}$, which consist of exactly
$|G|^n$ path liftings.
\end{proof}

\begin{remark}
Lemma~\ref{free} tells us that if the action of $G$ on $M$ is non-free, then  there must be some path $\overline{\alpha}: I\longrightarrow F(M/G,n)$ and  two different path liftings $\alpha'$ and $\alpha''$ of $\overline{\alpha}$ such that $\widetilde{c(\alpha')}\not=\widetilde{c(\alpha'')}$. In fact, since the action of $G$ on $M$ is non-free, we may assume that
there is some $s\in I$ such that $(p^G)^{-1}(\overline{\alpha}(s))$ is not the free orbit of some point in $F_G(M,n)$. Furthermore,
 there would be more than $|G|^n$ path liftings of $\overline{\alpha}$ since we can do more choices of path liftings of $\overline{\alpha}$ via those points of  non-free orbit in $(p^G)^{-1}(\overline{\alpha}(s))$.
\end{remark}

\begin{corollary}\label{free-2}
If the action of $G$ on $M$ is free, then
\begin{itemize}
 \item[$(1)$]  $\mathcal{P}_n^{orb}(M,G)\cong P_n(M/G)$, so $\mathcal{P}_n^{orb}(M,G)$ is realizable as $\pi_1(F(M/G,n), \overline{\bf x})$.
 \item[$(2)$]  $\mathcal{B}_n^{orb}(M,G)\cong B_n(M/G)$, so $\mathcal{B}_n^{orb}(M,G)$ is realizable as $\pi_1(F(M/G, n)/\Sigma_n, \widehat{\overline{\bf x}})$.
\end{itemize}
\end{corollary}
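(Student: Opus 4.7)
The plan is to reduce both assertions to the single claim that, under the freeness hypothesis, the epimorphism $\Psi_1$ from Proposition~\ref{re-orbits} is an isomorphism. Proposition~\ref{re-orbits} provides a morphism of short exact sequences
\[
\begin{CD}
1@>>> \mathcal{P}_n^{orb}(M,G) @>>> \mathcal{B}_n^{orb}(M,G) @>>> \Sigma_n @>>> 1\\
@. @VV\Psi_1 V   @VV\Psi_2  V  @ VV=  V  @.\\
1@ >>> P_n(M/G) @ >>> B_n(M/G) @>>> \Sigma_n @>>> 1
\end{CD}
\]
whose right-hand column is the identity. Once $\Psi_1$ is shown to be an isomorphism, the five lemma immediately promotes $\Psi_2$ to an isomorphism as well, and (2) follows from (1). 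The realizations as $\pi_1(F(M/G,n),\overline{\bf x})$ and $\pi_1(F(M/G,n)/\Sigma_n,\widehat{\overline{\bf x}})$ are then automatic by definition of $P_n(M/G)$ and $B_n(M/G)$.

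For (1), surjectivity of $\Psi_1$ is already guaranteed by Lemma~\ref{sur}, so the whole content is injectivity. Under the freeness assumption, the map $p^G : F_G(M,n) \to F(M/G,n)$ is a regular covering with discrete fibre $G^{\times n}$; this is precisely the content of the equivalences in Lemma~\ref{free}. Given classes $[\widetilde{c(\alpha)}], [\widetilde{c(\beta)}] \in \mathcal{P}_n^{orb}(M,G)$ with $\Psi_1([\widetilde{c(\alpha)}]) = \Psi_1([\widetilde{c(\beta)}])$, I would first normalize representatives via Remark~\ref{zero} so that $\alpha(0) = \beta(0) = {\bf x}$, then apply the covering homotopy lifting property to the resulting rel-$\partial I$ homotopy $p^G\alpha \simeq p^G\beta$. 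The lift $\widetilde H\colon I\times I\to F_G(M,n)$ starts at $\alpha$; its side paths over $\{0\}$ and $\{1\}$ cover constant paths in $F(M/G,n)$ and hence, the fibre being discrete, are themselves constant. By unique path lifting, $\widetilde H(\cdot,1)$ must then be the unique lift of $p^G\beta$ based at ${\bf x}$, namely $\beta$, and in particular $\alpha(1) = \beta(1)$ (so freeness also forces the elements $g,h\in G^{\times n}$ with $\alpha(1)=g{\bf x}$, $\beta(1)=h{\bf x}$ to coincide). Thus $\alpha \simeq \beta$ rel $\partial I$, and Theorem~\ref{group'} translates this equality of homotopy classes back into $[\widetilde{c(\alpha)}] = [\widetilde{c(\beta)}]$ in $\mathcal{P}_n^{orb}(M,G)$.

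Hence the argument is essentially a packaging exercise: the covering-space viewpoint of Lemma~\ref{free} together with the extended-fundamental-group identification of Theorem~\ref{group'} reduces injectivity of $\Psi_1$ to the standard lifting lemma, and the five lemma handles $\Psi_2$. The single subtle point — which is where freeness is used in an essential, rather than cosmetic, way — is controlling the side paths of the lifted homotopy: freeness of the $G$-action on ${\bf x}$ is exactly what guarantees that $G^{\times n}({\bf x})$ sits as a discrete fibre, so that constancy of those side paths is automatic and consequently $\alpha(1)=\beta(1)$ without any auxiliary assumption. I expect no other obstruction, and the remaining identifications $\pi_1^E(F(M/G,n),\overline{\bf x},\overline{\bf x}) = \pi_1(F(M/G,n),\overline{\bf x})$ and its $\Sigma_n$-quotient analogue are a direct consequence of Corollary~\ref{homotopy version}.
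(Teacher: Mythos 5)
Your proposal is correct and follows essentially the same route as the paper, whose proof is the one-line citation of Lemma~\ref{free} (the covering/unique-lifting content of freeness) together with Proposition~\ref{re-orbits} (the morphism of short exact sequences over the identity on $\Sigma_n$). You have simply made explicit the homotopy-lifting argument for injectivity of $\Psi_1$ and the five-lemma step for $\Psi_2$ that the paper leaves implicit.
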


\begin{proof}
It is a consequence of Lemma~\ref{free} and Proposition~\ref{re-orbits}.
\end{proof}

Compare with Corollary~\ref{free-1}, we obtain that if the action of $G$ on $M$ is free, then
$$Br(M^{\times n}, G^{\times n}\rtimes_\varphi \Sigma_n)\cong B_n(M/G)$$
and
$$Br(M^{\times n}, G^{\times n})\cong P_n(M/G).$$

\begin{remark}
In the viewpoint of the theory of covering spaces, generally $F_G(M,n)$ is not the covering space of $F(M/G,n)$. However,
paths and the homotopies between two paths in $F(M/G,n)$ can still be lifted to $F_G(M,n)$ but liftings with the same staring point may not be unique. Thus, if the action of $G$ on $M$ is non-free, then
the homomorphism $$p^G_*: P_n(M,G)\longrightarrow P_n(M/G)$$ induced by the projection $p^G: F_G(M, n)\longrightarrow F(M/G, n)$
 is no longer injective. Actually, $p^G_*$ is the compostion of a monomorphism and an epimorphism
$$P_n(M,G)\rightarrowtail P_n^\text{orb}(M,G)\twoheadrightarrow P_n(M/G).$$
\end{remark}

\subsection{Relation between orbit configuraiton space and ordinary configuraiton space}
There is a natural embedding $i: F_G(M, n)\hookrightarrow F(M, n)$ from orbit configuration space to its corresponding ordinary configuration space by forgetting the action of $G$.
\begin{lemma}
The induced homomorphism $i_*: \pi_1(F_G(M, n), {\bf x})\longrightarrow \pi_1(F(M, n), {\bf x})$ is an epimorphism.
\end{lemma}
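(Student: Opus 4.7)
The plan is to show that the complement $F(M,n)\setminus F_G(M,n)$ is a finite union of closed subsets of real codimension at least two in $F(M,n)$, and then invoke a general position argument to homotope any loop in $F(M,n)$ into $F_G(M,n)$.

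First I would decompose the complement explicitly as
$$F(M,n)\setminus F_G(M,n)=\bigcup_{\substack{i\neq j\\ g\in G\setminus\{e\}}}D_{i,j,g},\qquad D_{i,j,g}=\{(y_1,\ldots,y_n)\in F(M,n)\mid y_j=gy_i\}.$$
Since $G$ is finite and acts by homeomorphisms on $M$, each $D_{i,j,g}$ is closed in $F(M,n)$, and forgetting the $j$-th coordinate realizes $D_{i,j,g}$ as the graph of the continuous map $y_i\mapsto gy_i$ over the remaining coordinates; in particular $D_{i,j,g}$ is a locally flat topological submanifold of $F(M,n)$ of dimension $(n-1)\dim M$, i.e.\ of codimension $\dim M\geq 2$. (In a product of charts around a point of $D_{i,j,g}$ in which $y_i$ and $y_j=gy_i$ have coordinates, the straightening $(u_1,\ldots,u_n)\mapsto (u_1,\ldots,u_{j-1},u_j-\phi(u_i),u_{j+1},\ldots,u_n)$ sends $D_{i,j,g}$ to a linear subspace of codimension $\dim M$.) As a consequence, $F_G(M,n)$ is open in $F(M,n)$, so the base point ${\bf x}$ has a neighborhood contained in $F_G(M,n)$.

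Next I would perform general position on a representative loop. Given $[\gamma]\in\pi_1(F(M,n),{\bf x})$, subdivide $I$ into finitely many subintervals $[s_{k-1},s_k]$ whose images $\gamma([s_{k-1},s_k])$ lie in coordinate charts of the topological manifold $F(M,n)$, arranged so that the endpoints $\gamma(s_k)$ already lie in $F_G(M,n)$ (possible since $F_G(M,n)$ is open and dense). Inside each chart, using the straightening above, each $D_{i,j,g}$ looks like a linear subspace of codimension $\geq 2$; since the chart has dimension $n\dim M\geq 4$, a standard linear-algebra perturbation homotopes $\gamma|_{[s_{k-1},s_k]}$ rel $\{s_{k-1},s_k\}$ to a path that avoids every $D_{i,j,g}$ within the chart. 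Concatenating the perturbed pieces yields a loop $\widetilde\gamma:I\to F_G(M,n)$ based at ${\bf x}$ with $i\circ\widetilde\gamma\simeq\gamma$ rel $\partial I$, so $i_*[\widetilde\gamma]=[\gamma]$ and $i_*$ is surjective.

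The main obstacle is the rigorous justification of the perturbation step in the purely topological category, since $M$ is only assumed to be a topological manifold. For smooth $M$ it is routine smooth transversality applied to the finitely many submanifolds $D_{i,j,g}$. In the general topological case, the key point is that the local flatness of each $D_{i,j,g}$ (obtained from the straightening above) reduces the problem to the elementary fact that a path in $\mathbb{R}^{n\dim M}$ can be perturbed rel endpoints to avoid a codimension $\geq 2$ affine subspace, so no deep topological transversality theorem is required.
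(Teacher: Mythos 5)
Your argument is correct in outline and reaches the same conclusion by a genuinely different organization of the same underlying general-position idea. The paper works strand by strand in $M\times I$: at each parameter $s$ where some $G(\alpha_i(s))$ meets $G(\alpha_j(s))$, it chooses a $G$-invariant neighborhood $N$ of $\alpha_i(s)$ whose components separate $\alpha_i$ from the translates $g\alpha_j$, performs a small deformation of the single component path $\alpha_i$ there, and iterates over parameters (using compactness of $I$) and then over the remaining strands. You instead work globally in $F(M,n)$, decomposing the complement of $F_G(M,n)$ into the finitely many closed graphs $D_{i,j,g}$, observing via the straightening $(u_1,\dots,u_n)\mapsto(\dots,u_j-\phi(u_i),\dots)$ that each is locally flat of codimension $\dim M\geq 2$, and then pushing the whole loop off them chart by chart. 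What your version buys is a cleaner statement of exactly where the hypotheses $|G|<\infty$ and $\dim M\geq 2$ enter (finitely many closed pieces, each of codimension at least two), and it avoids the strand-by-strand bookkeeping; the paper's version stays closer to the braid picture and to the analogous argument it reuses for $\pi_1^E(F_G(M,n),{\bf x},\Sigma_n({\bf x}))\to\pi_1^E(F(M,n),{\bf x},\Sigma_n({\bf x}))$. Two points in your write-up deserve tightening: first, the straightenings for distinct triples $(i,j,g)$ are not simultaneous, so "avoids every $D_{i,j,g}$ within the chart" should be achieved iteratively --- push off one $D_{i,j,g}$ at a time, keeping each subsequent perturbation smaller than the (positive, by compactness and closedness) distance from the already-avoided sets; second, since $\gamma$ may a priori run inside the complement on a whole subinterval, the subdivision points $\gamma(s_k)$ cannot simply be "chosen" in $F_G(M,n)$ --- one first performs a preliminary small homotopy moving the finitely many points $\gamma(s_k)$ into the open dense set $F_G(M,n)$. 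Both repairs are routine, and with them your proof is complete.
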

\begin{proof}
Take an element $[\alpha]$ in $\pi_1(F(M, n), {\bf x})$. For any $s\in I$, if $\alpha(s)\in F_G(M,n)$, then $[\alpha]$ is also an element of $\pi_1(F_G(M, n), {\bf x})$.
\vskip .2cm
Now assume that there is some $s\in I$ (possibly $s$ can be any point of the whole $(0,1)$) such that $\alpha(s)\not\in F_G(M,n)$. This means that there are at least two  $i, j$ with $i\not=j$ such that $G(\alpha_i(s))=G(\alpha_j(s))$, where $\alpha=(\alpha_1, ..., \alpha_n)$. Clearly, $\alpha_i(s)$ or $\alpha_j(s)$ is not a $G$-fixed point since $\alpha_i(s)\not=\alpha_j(s)$.
So there exists some $g\not=e$ in $G$ such that $\alpha_i(s)=g\alpha_j(s)$.
Since $G$ is finite, there exists a $G$-invariant open neighborhood $N$ of $\alpha_i(s)$  which is a disjoint union of some connected open subsets in $F(M,n)$
such that for a enough small connected open neighborhood $N_s\subset I$ of $s$,  $\alpha_i(N_s)\cap N$ and $\alpha_j(N_s)\cap N$ lie in two different components of $N$.
Then we can always do a slight homotopy deformation on $\alpha_i$ in $N$, changing $\alpha_i$  into $\alpha'_i$, such that $\alpha'_i(N_s)\cap N$ never meets with the orbits of  other $\alpha_k(N_s)$, $k\not=i$. This gives a change on a small open arc of the path $\alpha_i$ up to homotopy.
\vskip .2cm
  As long as there are also finitely or infinitely many points $s$ in $I$ such that $\alpha_i(s)$ meets some orbit of $\alpha_k(s), k\not=i$, since $\alpha_i(I)$ is compact, we can perform the above approach finite times, so that $\alpha_i$ can be finally changed into a new path $\alpha'_i$ such that $\alpha_i\simeq
\alpha'_i$ rel $\partial I$ in $F(M,n)$, and  for any $s\in I$ and any $k\not=i$, $G(\alpha'_i(s))\cap G(\alpha_k(s))=\emptyset$.
This procedure only does change the component path $\alpha_i$, so $\alpha$ is changed into $(\alpha_1, ..., \alpha_{i-1}, \alpha'_i, \alpha_{i+1}, ..., \alpha_n)$, denoted by $\alpha'$. Clearly, $\alpha\simeq \alpha'$ rel $\partial I$ in $F(M,n)$.

\vskip .2cm
  If $\alpha'$ is not a path in $F_G(M,n)$ yet, then we will perform the above procedure on other component paths $\alpha_k, k\not=i$. Since $c(\alpha)$ only contains $n$ strings,
we can  end our procedure until we obtain a path $\beta$ such that for any $s\in I$, $\beta(s)\in F_G(M,n)$ and
$\alpha\simeq\beta$ rel $\partial I$ in $F(M,n)$. Thus, $[\alpha]$ is in the image of $i_*$. This completes the proof.
\end{proof}

In a similar way as above, we can show that the following homomorphism induced by the embedding $i: F_G(M, n)\hookrightarrow F(M, n)$:
$$\pi_1^E(F_G(M,n), {\bf x}, \Sigma_n({\bf x}))\longrightarrow\pi_1^E(F(M,n), {\bf x}, \Sigma_n({\bf x}))$$
is also an epimorphism. Therefore we have

\begin{proposition}\label{orbit-ordinary}
There is an epimorphism between two short exact sequences.
\[
\begin{CD}
1@>>> \mathcal{P}_n(M,G) @>>> \mathcal{B}_n(M,G) @>>> \Sigma_n @>>> 1\\
@. @VVV   @V  VV  @ V= VV  \\
1@ >>> P_n(M) @ > >> B_n(M) @>>> \Sigma_n @>>> 1\\
\end{CD}
\]
\end{proposition}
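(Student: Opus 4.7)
The plan is to invoke the homotopy descriptions of Corollary~\ref{homotopy version} together with the lemma that was just established. Under those identifications, $\mathcal{P}_n(M,G)\cong \pi_1(F_G(M,n),{\bf x})$, $\mathcal{B}_n(M,G)\cong \pi_1^E(F_G(M,n),{\bf x},\Sigma_n({\bf x}))$, and similarly $P_n(M)\cong \pi_1(F(M,n),{\bf x})$, $B_n(M)\cong \pi_1^E(F(M,n),{\bf x},\Sigma_n({\bf x}))$. The three vertical arrows are then defined on representatives by $[\alpha]\longmapsto[i\circ\alpha]$ for the left and middle maps, and as the identity on the right. Because the embedding $i$ preserves endpoints and sends homotopies in $F_G(M,n)$ rel $\partial I$ to homotopies in $F(M,n)$ rel $\partial I$, these are well-defined group homomorphisms with respect to the operation $\bullet$ on both sides (concatenation of paths followed by the permutation rule $\alpha\circ(\beta_\sigma)$ is insensitive to forgetting the $G$-action).

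Next I would verify commutativity of the two squares. The left square commutes tautologically since the inclusion of the fundamental group into the extended fundamental group is natural in the space. For the right square, the quotient maps $\Phi_\Sigma$ on both rows send the class of $\alpha$ to the unique $\sigma\in\Sigma_n$ with $\alpha(1)={\bf x}_\sigma$; since $i$ does not change endpoints, the same $\sigma$ is read off on either side, so identification with $\mathrm{id}_{\Sigma_n}$ is consistent. Exactness of the top row is Theorem~\ref{main result}, while the bottom row is the classical sequence arising from the fibration $F(M,n)\to F(M,n)/\Sigma_n$.

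The main step is to show that the middle vertical arrow is surjective, and this is where I would adapt the argument proving the preceding lemma to the extended setting. Given $[c(\alpha)]\in B_n(M)$ with $\alpha=(\alpha_1,\dots,\alpha_n)$ and $\alpha(1)={\bf x}_\sigma$, I would inspect the set of parameters $s\in I$ where $G(\alpha_i(s))\cap G(\alpha_j(s))\neq\emptyset$ for some $i\neq j$. Since the base point ${\bf x}$ is of free orbit type, these bad parameters lie in the interior of $I$, so the component paths may be slightly perturbed near each bad $s$ without moving their endpoints. Using finiteness of $G$, I pick $G$-invariant open neighborhoods around $\alpha_i(s)$ whose connected components separate the relevant orbits, then use compactness of $\alpha_i(I)$ to perform only finitely many such perturbations on each $\alpha_i$ in turn. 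The resulting path $\beta$ lies in $F_G(M,n)$, satisfies $\beta(0)={\bf x}$ and $\beta(1)={\bf x}_\sigma$, and is homotopic to $\alpha$ rel $\partial I$ inside $F(M,n)$; hence $[\beta]\in \pi_1^E(F_G(M,n),{\bf x},\Sigma_n({\bf x}))$ maps to $[\alpha]$, giving surjectivity.

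The hard part, as sketched above, is controlling the perturbation so that the endpoints (which are already generic) remain fixed and so that after finitely many local modifications one simultaneously escapes the orbits of every other component. The key technical tool is the compactness of $I$ coupled with the freeness of the $G$-action at the base point, exactly as in the preceding lemma. Once surjectivity of the middle arrow is in place, the five lemma (or a direct diagram chase using the identity on $\Sigma_n$ and surjectivity of the left arrow) yields the desired epimorphism of short exact sequences.
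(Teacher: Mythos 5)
Your proposal is correct and follows essentially the same route as the paper: the paper proves the preceding lemma (surjectivity of $i_*$ on $\pi_1$) by exactly the perturbation argument you describe, then asserts that "in a similar way" the induced map $\pi_1^E(F_G(M,n),{\bf x},\Sigma_n({\bf x}))\to\pi_1^E(F(M,n),{\bf x},\Sigma_n({\bf x}))$ is an epimorphism, which is precisely your adaptation to the extended setting with endpoints ${\bf x}$ and ${\bf x}_\sigma$ held fixed. Your additional remarks on well-definedness, commutativity of the squares, and the optional four-lemma shortcut are consistent with, and slightly more detailed than, what the paper records.
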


\section{Presentations of orbit braid groups in $\mathbb{C}\times I$ with two typical actions on $\mathbb{C}$}\label{calculation}

The geometric presentation of classical braid group $B_n(\mathbb{R}^2)$ in $\mathbb{R}^2\times I$ gives us much more insights to the case of orbit braid group (\cite{A2, Bir}). Thus we begin with  our work from the case of $\mathbb{C} \approx \mathbb{R}^2$ with the following two typical actions:
\begin{itemize}
 \item[(I)]  One is the action $\mathbb{Z}_p\curvearrowright^{\phi_1}\mathbb{C}$
defined by $(e^{{2k\pi{\bf i}}\over p}, z)\longmapsto e^{{2k\pi{\bf i}}\over p}z$, which is non-free and fixes only the origin of $\mathbb{C}$, where $p$ is a prime, and
$\mathbb{Z}_p$ is regarded as the group $\{e^{{2k\pi{\bf i}}\over p}|0\leq k<p\}$. If the action $\phi_1$ is restricted to $\mathbb{C}^{\times}=\mathbb{C}\setminus\{0\}$, then the action $\mathbb{Z}_p\curvearrowright^{\phi_1}\mathbb{C}^{\times}$ is free.
\item[(II)]
 The other one is a non-free action $(\mathbb{Z}_2)^2\curvearrowright^{\phi_2}\mathbb{C}$ defined by
$$\begin{cases}
z\longmapsto \overline{z}\\
z\longmapsto -\overline{z}.
\end{cases}
$$
This action is just the standard representation of $(\mathbb{Z}_2)^2$ on $\mathbb{C}\approx \mathbb{R}^2$.
\end{itemize}

 Throughout the following, fix
$${\bf z}=(1+{\bf i}, 2+2{\bf i}, ..., n+n{\bf i})$$ as the base point in $F_{\mathbb{Z}_p}(\mathbb{C},n)$, $F_{\mathbb{Z}_p}(\mathbb{C}^{\times},n)$ and $F_{(\mathbb{Z}_2)^2}(\mathbb{C},n)$, where ${\bf i}=\sqrt{-1}$. Clearly, whichever the action is, the orbit of ${\bf z}$ is of free. For a convenience, by $\widehat{l}$ we mean $l+l{\bf i}$, so
${\bf z}=(1+{\bf i}, 2+2{\bf i}, ..., n+n{\bf i})=(\widehat{1}, \widehat{2}, ..., \widehat{n})$.

\subsection{Orbit braid group of $F_{\mathbb{Z}_p}(\mathbb{C},n)$}

For a path $\alpha=(\alpha_1, ...,\alpha_n)$ in $F_{\mathbb{Z}_2}( \mathbb{C}, n )$ with $\alpha(0)={\bf z}$ and $\alpha(1)=g{\bf z}_{\sigma}$ where $g\in \mathbb{Z}_p$, it is easy to see that
the corresponding orbit braid $\widetilde{c(\alpha)}=\coprod_{i=1}^n \widetilde{c(\alpha_i)}$ is symmetric with respect to the line $O=\{0\}\times I$ in $\mathbb{C}\times I$, where $$\widetilde{c(\alpha_i)}=\{(\alpha_i(s), s), (e^{{2\pi{\bf i}}\over p}\alpha_i(s), s), \cdots, (e^{{2(p-1)\pi{\bf i}}\over p}\alpha_i(s), s)|s\in I\}.$$

\vskip .2cm
First let us consider the case $p=2$. To describe $\mathcal{B}_n^{\text{orb}}(\mathbb{C} ,\mathbb{Z}_2)$, we construct a family of basic "bricks"  ${\bf b}_k, k=1, ...,n-1,$ and
${\bf b}$, where each orbit braid ${\bf b}_k$  is chosen as the class $[\widetilde{c(\alpha^{(k)})}]$ given by the path
 \begin{equation}\label{path-1}
 \alpha^{(k)}(s)=(1+{\bf i},\dots,k+(k+1){\bf i}+\exp^{-\frac{\pi}{2}{\bf i}(1-s)},(k+1)+k{\bf i}+{\bf i}\exp^{\frac{\pi}{2}{\bf i}s},\dots,n+n{\bf i})
 \end{equation}
as shown in the following picture

\begin{tikzpicture}
\draw [line width=0.04cm] (-7,0)--(7,0);
\node[right] at (7,0) {$t=0$};
\draw [line width=0.04cm] (-7,-2)--(7,-2);
\node[right] at (7,-2) {$t=1$};
\draw [dashed] (0,1)--(0,-3);
\node
 [above] at (0,1) {$O$};
\draw (0.5,0)--(0.5,-2);
\node[above] at (0.5,0) {$\widehat{1}$};
\draw (-0.5,0)--(-0.5,-2);
\node[above] at (-0.5,0) {$\widehat{-1}$};
\draw[dotted] (1,-1)--(1.5,-1);
\draw[dotted] (-1,-1)--(-1.5,-1);
\draw (2,0)--(2,-2);
\node[above] at (2,0) {$\widehat{k-1}$};
\draw (-2,0)--(-2,-2);
\node[above] at (-2,0) {$\widehat{1-k}$};

\draw (3,0)--(4,-2);
\node[above] at (3,0) {$\widehat{k}$};
\draw (-3,0)--(-4,-2);
\node[above] at (-3,0) {$\widehat{-k}$};

\draw (4,0)--(3.6,-0.8);
\draw (3.4,-1.2)--(3,-2);
\node[above] at (4,0) {$\widehat{k+1}$};
\draw (-4,0)--(-3.6,-0.8);
\draw (-3.4,-1.2)--(-3,-2);
\node[above] at (-4,0) {$\widehat{-k-1}$};

\draw (5.4,0)--(5.4,-2);
\node[above] at (5.4,0) {$\widehat{k+2}$};
\draw (-5.4,0)--(-5.4,-2);
\node[above] at (-5.4,0) {$\widehat{-k-2}$};

\draw (6.5,0)--(6.5,-2);
\node[above] at (6.5,0) {$\widehat{n}$};
\draw (-6.5,0)--(-6.5,-2);
\node[above] at (-6.5,0) {$\widehat{-n}$};

\draw[dotted] (5.7,-1)--(6.2,-1);
\draw[dotted] (-5.7,-1)--(-6.2,-1);
\node[below] at (0,-3) {$\widetilde{c(\alpha^{(k)})}$};
\end{tikzpicture}

\noindent and ${\bf b}$ is chosen as the class $[\widetilde{c(\alpha)}]$ given by the path
\begin{equation}\label{path-2}
\alpha(s)=((1+{\bf i})(1-2s), 2+2{\bf i},\dots, n+n{\bf i})
\end{equation}
 as shown in the following picture

\begin{tikzpicture}
\draw [line width=0.04cm] (-7,0)--(7,0);
\node[right] at (7,0) {$t=0$};
\draw [line width=0.04cm] (-7,-2)--(7,-2);
\node[right] at (7,-2) {$t=1$};
\draw [dashed] (0,1)--(0,-3);
\node[above] at (0,1) {$O$};
\draw (0.5,0)--(-0.5,-2);
\node[above] at (0.5,0) {$\widehat{1}$};
\draw (-0.5,0)--(0.5,-2);
\node[above] at (-0.5,0) {$\widehat{-1}$};

\draw (2,0)--(2,-2);
\node[above] at (2,0) {$\widehat{2}$};
\draw (-2,0)--(-2,-2);
\node[above] at (-2,0) {$\widehat{-2}$};

\draw (6.5,0)--(6.5,-2);
\node[above] at (6.5,0) {$\widehat{n}$};
\draw (-6.5,0)--(-6.5,-2);
\node[above] at (-6.5,0) {$\widehat{-n}$};

\draw[dotted] (4,-1)--(4.5,-1);
\draw[dotted] (-4,-1)--(-4.5,-1);

\node[below] at (0,-3) {$\widetilde{c(\alpha)}$};
\end{tikzpicture}

\begin{remark}\label{intersect}
	In the above picture for $\widetilde{c(\alpha)}$, we see that the first string  and its orbit can exactly intersect at $O$. The reason that this can happen is because the  origin in $\mathbb{C}$ is just a fixed point of $\mathbb{Z}_2$-action. However, this is not necessary. Actually,  even if the first string and its orbit do not intersect at $O$, then the corresponding orbit braid can still be equivalent to $\widetilde{c(\alpha)}$. In fact, we can choose the following path
\begin{equation}\label{path-3}
\beta(s)=((1+{\bf i})e^{\pi{\bf i}s}, 2+2{\bf i},\dots, n+n{\bf i}),
\end{equation}	
which never goes through the origin of $\mathbb{C}$, but $[\widetilde{c(\alpha)}]=[\widetilde{c(\beta)}]$. Thus, ${\bf b}$ can also be chosen as $[\widetilde{c(\beta)}]$.
	\vskip .2cm
	In addition, we note that there are also other orbit braids $\widetilde{c(\gamma)}$ in which the $i$-th string $c(\gamma_i)$ and its orbit intersect or not but their ending points are exchanged, $i=2, ..., n$. However, these orbit braids are not basic "bricks". In fact, we see easily that  for each $i$, $[\widetilde{c(\gamma)}]$ can be represented as a composition  ${\bf b}_{i-1}^{-1}\cdots {\bf b}_1^{-1}{\bf b}{\bf b}_1\cdots {\bf b}_{i-1}$. The following two pictures illustrate  the case of $i=2$.
	
	\begin{tikzpicture}
	\draw [line width=0.04cm] (-7,0)--(7,0);
	\node[right] at (7,0) {$t=0$};
	\draw [line width=0.04cm] (-7,-2)--(7,-2);
	\node[right] at (7,-2) {$t=1$};
	\draw [dashed] (0,1)--(0,-3);
	\node[above] at (0,1) {$O$};
	\draw (0.5,0)--(0.5,---0.6);
	\draw (0.5,-0.9)--(0.5,-1.1);
	\draw (0.5,-1.4)--(0.5,-2);
	\node[above] at (0.5,0) {$\widehat{1}$};
	
	\draw (-0.5,0)--(-0.5,-2);
	\node[above] at (-0.5,0) {$\widehat{-1}$};
	
	\draw (2,0)--(-0.4,-1.2);
	\draw (-0.6,-1.3)--(-2,-2);
	\node[above] at (2,0) {$\widehat{2}$};
	\draw (-2,0)--(-0.6,-0.7);
	\draw (-0.4,-0.8)--(2,-2);
	\node[above] at (-2,0) {$\widehat{-2}$};
	
	\draw (6.5,0)--(6.5,-2);
	\node[above] at (6.5,0) {$\widehat{n}$};
	\draw (-6.5,0)--(-6.5,-2);
	\node[above] at (-6.5,0) {$\widehat{-n}$};
	
	\draw[dotted] (4,-1)--(4.5,-1);
	\draw[dotted] (-4,-1)--(-4.5,-1);
\node[below] at (0,-2.9) {$\widetilde{c(\gamma)}$};
	\end{tikzpicture}

\noindent and
	
	\begin{tikzpicture}
\draw [line width=0.04cm] (-7,0)--(7,0);
\node[right] at (7,0) {$t=0$};
\draw [line width=0.04cm] (-7,-3)--(7,-3);
\node[right] at (7,-3) {$t=1$};
\draw [dashed] (0,1)--(0,-4);
\node[above] at (0,1) {$O$};

\draw (0.5,0)--(1.2,---0.47);
\draw (1.4,-0.6)--(2,-1);
\draw (2,-1)--(2,-2);
\draw (2,-2)--(1.4,-2.4);
\draw (1.2,-2.53)--(0.5,-3);
\node[above] at (0.5,0) {$\widehat{1}$};
\draw (-0.5,0)--(-2,-1);
\draw (-2,-1)--(-2,-2);
\draw (-2,-2)--(-0.5,-3);
\node[above] at (-0.5,0) {$\widehat{-1}$};

\draw (2,0)--(-1.2,-2.4);
\draw (-1.4,-2.55)--(-2,-3);
\node[above] at (2,0) {$\widehat{2}$};
\draw (-2,0)--(-1.4,-0.45);
\draw (-1.2,-0.6)--(2,-3);
\node[above] at (-2,0) {$\widehat{-2}$};

\draw (6.5,0)--(6.5,-3);
\node[above] at (6.5,0) {$\widehat{n}$};
\draw (-6.5,0)--(-6.5,-3);
\node[above] at (-6.5,0) {$\widehat{-n}$};

\draw[dotted] (4,-1.5)--(4.5,-1.5);
\draw[dotted] (-4,-1.5)--(-4.5,-1.5);
\node[below] at (0,-4) {${\bf b}_1^{-1}{\bf b}{\bf b}_1$};
\end{tikzpicture}
\end{remark}

\begin{proposition} \label{example-1}
$\mathcal{B}_n^{\text{orb}}(\mathbb{C} ,\mathbb{Z}_2)$ is generated by ${\bf b}_k$ $(1\leq k\leq n-1)$ and ${\bf b}$, with relations
\begin{itemize}
\item[$(1)$] ${\bf b}^2=e$;
\item[$(2)$]${\bf b}{\bf b}_1{\bf b}{\bf b}_1={\bf b}_1{\bf b}{\bf b}_1{\bf b}$;
\item[$(3)$]${\bf b}_k{\bf b}={\bf b}{\bf b}_k \quad( k>1 )$;
\item[$(4)$]${\bf b}_k{\bf b}_{k+1}{\bf b}_k={\bf b}_{k+1}{\bf b}_k{\bf b}_{k+1}$;
\item[$(5)$]${\bf b}_k{\bf b}_l={\bf b}_l{\bf b}_k \quad(|k-l|>1)$.
\end{itemize}
\end{proposition}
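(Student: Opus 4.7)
The plan is to follow the classical three-step strategy for braid group presentations: verify the relations, establish generation, and rule out further relations.

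For verification, I would realize each of relations~(1)--(5) as an explicit homotopy of paths in $F_{\mathbb{Z}_2}(\mathbb{C}, n)$ via Proposition~\ref{sn-condition}. The new relation (1) $\mathbf{b}^2 = e$ uses critically that the path defining $\mathbf{b}$ passes through the fixed point $0$: the first coordinate of $\mathbf{b}^2$ traverses the diagonal segment $\widehat{1} \to -\widehat{1} \to \widehat{1}$, and the straight-line contraction to the constant stays inside this segment while avoiding $\pm \widehat{j}$ for $j \geq 2$, giving a valid null-homotopy in $F_{\mathbb{Z}_2}(\mathbb{C}, n)$. Relations (3) and (5) follow because the relevant paths can be supported on disjoint subsets of $\mathbb{C}$, so their compositions commute up to homotopy. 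Relation (4) is the classical Artin braid relation applied to strings $k, k{+}1, k{+}2$ in a region away from the axis $O$. Relation~(2), the type-$B$ braid relation, is verified by an equivariant Reidemeister-III-type isotopy on the four-strand orbit braid on $\{\pm \widehat{1}, \pm \widehat{2}\}$, exploiting the $\mathbb{Z}_2$-symmetry.

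For generation, I would use the short exact sequence from Theorem~\ref{main result},
$$1 \longrightarrow \mathcal{P}_n(\mathbb{C}, \mathbb{Z}_2) \longrightarrow \mathcal{B}_n^{orb}(\mathbb{C}, \mathbb{Z}_2) \xrightarrow{\Phi} \mathbb{Z}_2^{\times n} \rtimes_\varphi \Sigma_n \longrightarrow 1,$$
observing that $\Phi(\mathbf{b})$ and the $\Phi(\mathbf{b}_k)$ are exactly the standard Coxeter generators of $B_n \cong \mathbb{Z}_2^n \rtimes \Sigma_n$, so they generate modulo the kernel. To show the same elements cover the kernel $\mathcal{P}_n(\mathbb{C}, \mathbb{Z}_2) = \pi_1(F_{\mathbb{Z}_2}(\mathbb{C}, n))$, I would apply the orbit Fadell--Neuwirth fibration $F_{\mathbb{Z}_2}(\mathbb{C}, n) \to F_{\mathbb{Z}_2}(\mathbb{C}, n-1)$ forgetting the last coordinate, whose fiber is the $(2n{-}2)$-punctured plane $\mathbb{C} \setminus \{\pm \widehat{1}, \dots, \pm \widehat{n-1}\}$ with free $\pi_1$ of rank $2n-2$. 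Each generating loop (around $\widehat{j}$ or $-\widehat{j}$) corresponds to the last string going around the orbit of the $j$-th, and can be written explicitly as a conjugate $\mathbf{b}_{n-1} \cdots \mathbf{b}_j^2 \cdots \mathbf{b}_{n-1}^{-1}$ or an analogous word involving $\mathbf{b}$ (as illustrated by Remark~\ref{intersect}); induction on $n$ then completes generation.

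To rule out further relations, let $H$ be the abstract group with the given presentation and $\varphi \colon H \twoheadrightarrow \mathcal{B}_n^{orb}(\mathbb{C}, \mathbb{Z}_2)$ the resulting surjection. I would compare two short exact sequences: $H$ admits its own surjection onto $B_n$ (imposed by (1) together with the further squaring of the $\mathbf{b}_k$'s), and I would show by induction that the kernel of $H \to B_n$ is isomorphic via $\varphi$ to $\pi_1(F_{\mathbb{Z}_2}(\mathbb{C}, n))$, so that the five lemma yields injectivity. The main obstacle will be this final identification: one must check that the conjugation action of the image of $B_{n-1}$ on the kernel in $H$ precisely matches the monodromy action of the base braid group on the free fiber $\pi_1$ coming from the Fadell--Neuwirth tower, a task requiring careful combinatorial manipulation using exactly relations (2)--(5). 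Relation~(1) is what distinguishes this case from $\mathcal{B}_n^{orb}(\mathbb{C}^\times, \mathbb{Z}_2) \cong Br(B_n)$, where $\mathbf{b}^2$ is nontrivial; its presence here reflects the fixed point of the $\mathbb{Z}_2$-action at the origin.
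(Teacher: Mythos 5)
Your verification of relations (1)--(5) is sound and in fact more careful than the paper's own treatment (the paper checks (4)--(5) by reduction to the classical braid group, illustrates (2) with pictures, and leaves (1) and (3) as exercises; your observation that the null-homotopy for ${\bf b}^2=e$ stays on the segment $[-\widehat{1},\widehat{1}]$ and that orbits of points on that segment avoid $\pm\widehat{j}$ for $j\geq 2$ is exactly the right check against condition (3) of Definition~\ref{isotopy}). The paper's generation argument is the informal geometric one --- put an orbit braid in general position so that each crossing of adjacent orbit strings is a basic brick --- and the paper does not seriously address completeness of the relations at all, so your plan is more ambitious.

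However, the central tool you invoke for both the kernel-generation step and the injectivity step does not exist: the forgetful map $F_{\mathbb{Z}_2}(\mathbb{C},n)\longrightarrow F_{\mathbb{Z}_2}(\mathbb{C},n-1)$ is \emph{not} a fibration, because the fiber over $(z_1,\dots,z_{n-1})$ is $\mathbb{C}\setminus\{\pm z_1,\dots,\pm z_{n-1}\}$, which has $2n-2$ punctures generically but only $2n-3$ punctures over configurations in which some $z_i$ lies at the origin (the fixed point of the involution). Such configurations are present in $F_{\mathbb{Z}_2}(\mathbb{C},n-1)$, so the fibers are not even homotopy equivalent over different base points. Equivalently, $F_{\mathbb{Z}_2}(\mathbb{C},n)$ is the complement of the type-$D_n$ reflection arrangement, which is not fiber-type (not supersolvable) for $n\geq 4$, so no Fadell--Neuwirth-style tower with free-group fibers of rank $2n-2$ is available. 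This is precisely the degeneration the paper is pointing at when it stresses that for non-free actions the sequence $1\to\mathcal{P}_n(M,G)\to\mathcal{B}_n^{orb}(M,G)\to G^{\times n}\rtimes\Sigma_n\to 1$ is not induced by a known fibration, and it breaks your induction: the monodromy comparison and the five-lemma argument have no exact sequence of fiber groups to run on. (Note the contrast with $\mathbb{C}^{\times}$, where the action is free, the projection is a genuine fibration, and your scheme would go through --- consistent with $\mathcal{B}_n^{orb}(\mathbb{C}^{\times},\mathbb{Z}_2)\cong Br(B_n)$.) To complete the argument for $\mathbb{C}$ you would instead need to identify $\pi_1(F_{\mathbb{Z}_2}(\mathbb{C},n))$ with $P(D_n)$ by other means (e.g.\ the Brieskorn--Deligne description of the $D_n$ arrangement complement, as the paper does in Section~3.4 via \cite{G}) and then match the presentation against $Br(D_n)$ extended by the extra order-two generator ${\bf b}$.
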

\begin{proof}
First, it is obvious that  every class in $\mathcal{B}_n^{\text{orb}}(\mathbb{C} ,\mathbb{Z}_2)$ can be reduced into the composition
of ${\bf b}_k$ and ${\bf b}$ because each crossing of two adjacent orbit strings just decides a basic "brick".

\vskip .2cm
Each ${\bf b}_k$ has a symmetric structure with respect to $O$ and its half part is used as a generator of classical braid group $B_n(\mathbb{R}^2)$. Thus, the relations $(4)$ and $(5)$ follow from the theory of classical braid groups (see, e.g., \cite{Bir}).

\vskip .2cm

We can construct homotopy deformation maps that connect both sides of the equations in relations $(1),(2)$, and $(3)$, respectively.
Actually, we can intuitively see this. Let us look at the pictures of orbit braids in both sides of relation $(2)$, as shown below.

\begin{tikzpicture}
\draw [line width=0.04cm] (-7,0)--(-2,0);
\draw [line width=0.04cm] (2,0)--(7,0);
\node[right] at (-2,0) {$t=0$};
\node[right] at (7,0) {$t=0$};
\draw [line width=0.04cm] (-7,-4)--(-2,-4);
\draw [line width=0.04cm] (2,-4)--(7,-4);
\node[right] at (7,-4) {$t=1$};
\node[right] at (-2,-4) {$t=1$};
\draw [dashed] (-4.5,1)--(-4.5,-5);
\node[above] at (-4.5,1) {$O$};
\draw [dashed] (4.5,1)--(4.5,-5);
\node[above] at (4.5,1) {$O$};

\draw (-4,0)--(-3,-1);
\draw (-3,-1)--(-3,-2);
\draw (-3,-2)--(-3.4,-2.4);
\draw (-3.6,-2.6)--(-4,-3);
\draw (-4,-3)--(-5,-4);
\node[above] at (-4,0) {$1$};

\draw (-3,0)--(-3.4,-0.4);
\draw (-3.6,-0.6)--(-4,-1);
\draw (-4,-1)--(-5,-2);
\draw (-5,-2)--(-5.4,-2.4);
\draw (-5.6,-2.6)--(-6,-3);
\draw (-6,-3)--(-6,-4);
\node[above] at (-3,0) {$2$};

\draw (-5,0)--(-5.4,-0.4);
\draw (-5.6,-0.6)--(-6,-1);
\draw (-6,-1)--(-6,-2);
\draw (-6,-2)--(-4,-4);
\node[above] at (-5,0) {$-1$};

\draw (-6,0)--(-3,-3);
\draw (-3,-3)--(-3,-4);
\node[above] at (-6,0) {$-2$};
\node [below] at (-4.5,-5) {${\bf b_1bb_1b}$};

\draw [<->, thick] (-0.5,-2)--(1,-2);

\draw (5,0)--(3.6,-1.4);
\draw (3.4,-1.6)--(3,-2);
\draw (3,-2)--(3,-3);
\draw (3,-3)--(4,-4);
\node[above] at (5,0) {$1$};

\draw (6,0)--(6,-1);
\draw (6,-1)--(5.6,-1.4);
\draw (5.4,-1.6)--(3.6,-3.4);
\draw (3.4,-3.6)--(3,-4);
\node[above] at (6,0) {$2$};

\draw (4,0)--(6,-2);
\draw (6,-2)--(6,-3);
\draw (6,-3)--(5.6,-3.4);
\draw (5.4,-3.6)--(5,-4);
\node[above] at (4,0) {$-1$};

\draw (3,0)--(3,-1);
\draw (3,-1)--(6,-4);
\node[above] at (3,0) {$-2$};
\node [below] at (4.5,-5) {${\bf bb_1bb_1}$};
\end{tikzpicture}

\noindent Since we can always do a slight homotopy deformation on ${\bf b}$ near $O$ such that the first string  and its orbit  do not intersect at $O$ as stated in Remark~\ref{intersect}, two crossings at $O$ of the left orbit braid (or right orbit braid) in the above picture can be exchanged. This illustrates the equivalence of two orbit braids. Actually, if we choose $\widetilde{c(\beta)}$ in Remark~\ref{intersect} as a representative of ${\bf b}$, then we can just avoid this obstruction produced by using $\widetilde{c(\alpha)}$ as a representative of ${\bf b}$.

\vskip .2cm
We can also use a similar way to do this for the cases of $(1)$ and $(3)$. However,  we would like to leave them as  exercises to the reader.
\end{proof}

For the general prime $p$,  we first need to modify the path $\alpha$ in (\ref{path-2}) or $\beta$ in (\ref{path-3})
into the general form
\begin{equation}\label{path-4}
\alpha(s)=((1+{\bf i})(1+(e^{{2\pi{\bf i}}\over p}-1)s), 2+2{\bf i},\dots, n+n{\bf i}).
\end{equation}
or
\begin{equation}\label{path-5}
\beta(s)=((1+{\bf i})e^{{2\pi{\bf i}s}\over p}, 2+2{\bf i},\dots, n+n{\bf i}).
\end{equation}
Then we can  use the paths  $\alpha^{(k)}$ in (\ref{path-1}) and  $\alpha$ in (\ref{path-4}) (or $\beta$ in (\ref{path-5})) to construct the required basic "bricks" ${\bf b}_k=[\widetilde{c(\alpha^{(k)})}]$ for $1\leq k\leq n-1$
and ${\bf b}= [\widetilde{c(\alpha)}]$ or $[\widetilde{c(\beta)}]$, each of which would consist of  $p$  symmetric parts with respect to the line $O$.

\vskip .2cm
It is not difficult to see that each class in $\mathcal{B}_n^{\text{orb}}(\mathbb{C} ,\mathbb{Z}_p)$ is also a composition of ${\bf b}_k (1\leq k\leq n-1)$ and ${\bf b}$. To get the presentation of $\mathcal{B}_n^{\text{orb}}(\mathbb{C} ,\mathbb{Z}_p)$, an easy observation shows that we merely need to change the relations (1) and (2) in Proposition~\ref{example-1} into ${\bf b}^p=e$ and $({\bf b}{\bf b}_1)^p=({\bf b}_1{\bf b})^p$. The key reason for ${\bf b}^p=e$ is that any twine of an orbit string surround the line $O$ can be undone since the origin of $\mathbb{C}$ is a fixed point of the action. For $({\bf b}{\bf b}_1)^p=({\bf b}_1{\bf b})^p$, if we choose $\widetilde{c(\beta)}$ as a representative of ${\bf b}$, then  $({\bf b}{\bf b}_1)^p=({\bf b}_1{\bf b})^p$ can be seen intuitively without any obstruction. Of course, the reason for ${\bf b}^p=e$ is different from that for $({\bf b}{\bf b}_1)^p=({\bf b}_1{\bf b})^p$.
Thus we have that

\begin{proposition}
	$\mathcal{B}_n^{\text{orb}}(\mathbb{C} ,\mathbb{Z}_p)$ is generated by ${\bf b}_k$ $(1\leq k\leq n-1)$ and ${\bf b}$, with relations
	\begin{itemize}
		\item[$(1)$] ${\bf b}^p=e$;
		\item[$(2)$]$({\bf b}{\bf b}_1)^p=({\bf b}_1{\bf b})^p$;
		\item[$(3)$]${\bf b}_k{\bf b}={\bf b}{\bf b}_k \quad( k>1 )$;
		\item[$(4)$]${\bf b}_k{\bf b}_{k+1}{\bf b}_k={\bf b}_{k+1}{\bf b}_k{\bf b}_{k+1}$;
		\item[$(5)$]${\bf b}_k{\bf b}_l={\bf b}_l{\bf b}_k \quad(|k-l|>1)$.
	\end{itemize}
\end{proposition}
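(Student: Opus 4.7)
The plan is to follow the same strategy as the proof of Proposition~\ref{example-1} for $p=2$, adjusting throughout for the $p$-fold rotational symmetry. There are three tasks: (i) show that $\{{\bf b}_k\}_{1\le k\le n-1}$ and ${\bf b}$ generate $\mathcal{B}_n^{\text{orb}}(\mathbb{C},\mathbb{Z}_p)$; (ii) verify each of the relations (1)--(5); (iii) prove the presentation is complete.

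For (i), given any class $[\widetilde{c(\gamma)}]$, I would put a representative $\gamma$ in general position with respect to the axis $O = \{0\} \times I$ and to all its $\mathbb{Z}_p$-translates, so that the resulting diagram in $\mathbb{C} \times I$ has only finitely many transverse crossings. Slicing $I$ at the heights of these crossings, each crossing between two adjacent strings is, after conjugation by appropriate ${\bf b}_k$'s, equivalent to ${\bf b}_k^{\pm 1}$ for some $k$, and each crossing of a string with one of its $\mathbb{Z}_p$-translates near $O$ contributes a conjugate of ${\bf b}^{\pm 1}$ (using the same ``brick'' decomposition idea illustrated in Remark~\ref{intersect}).

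For (ii): relations (4) and (5) are supported away from $O$, where the $p$ disjoint translates of a local configuration behave like $p$ independent classical braids, so Artin's relations carry over. Relation (3) is immediate because representatives of ${\bf b}$ and of ${\bf b}_k$ ($k>1$) have spatially disjoint supports (together with all their $\mathbb{Z}_p$-translates), so the corresponding paths commute up to homotopy. For (1), I would use the representative $\beta$ of (\ref{path-5}): then ${\bf b}^p$ is represented by the loop $s \mapsto ((1+{\bf i})e^{2\pi{\bf i}s},2+2{\bf i},\ldots,n+n{\bf i})$, and this loop contracts through the $\mathbb{Z}_p$-fixed origin, a null-homotopy that remains inside $F_{\mathbb{Z}_p}(\mathbb{C},n)$ since the other coordinates sit in a bounded region away from the origin. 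Relation (2) is verified pictorially: both sides describe the first two strings together with their $p$ translates weaving in a cyclically symmetric pattern around the axis $O$, and they coincide via a uniform reparametrization of the rotation angle; choosing $\beta$ (rather than $\alpha$) as a representative avoids the obstruction of having strings meet $O$.

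The main obstacle is (iii), completeness. For this I would embed the problem into the short exact sequence of Theorem~\ref{main result},
\[
1 \longrightarrow \mathcal{P}_n(\mathbb{C}, \mathbb{Z}_p) \longrightarrow \mathcal{B}_n^{\text{orb}}(\mathbb{C}, \mathbb{Z}_p) \xrightarrow{\Phi} \mathbb{Z}_p^{\times n} \rtimes_\varphi \Sigma_n \longrightarrow 1,
\]
and proceed as follows. Let $G$ denote the abstract group with the stated presentation and let $\pi: G \twoheadrightarrow \mathcal{B}_n^{\text{orb}}(\mathbb{C}, \mathbb{Z}_p)$ be the surjection provided by (i) and (ii). Composing with $\Phi$, the images of ${\bf b}_k$ become the transpositions $(k,k{+}1)$ and ${\bf b}$ becomes the generator $(e^{2\pi{\bf i}/p},1,\ldots,1;e)$; a direct check, using relations (1)--(5), shows this factors through a well-defined map $G \to \mathbb{Z}_p^{\times n} \rtimes_\varphi \Sigma_n$ whose kernel $K$ is the normal subgroup generated by explicit conjugates of the pure-orbit-braid words. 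I would then identify $K$ via Reidemeister--Schreier and compare with the known presentation of $\mathcal{P}_n(\mathbb{C}, \mathbb{Z}_p) = \pi_1(F_{\mathbb{Z}_p}(\mathbb{C}, n))$ coming from the Fadell--Neuwirth--Xicot\'encatl theory for orbit configuration spaces; matching the two presentations relator by relator is the technical heart of the argument, and it is where I expect most of the work to lie.
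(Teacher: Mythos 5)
Your parts (i) and (ii) track the paper's argument almost exactly. The paper proves the $p=2$ case (Proposition~\ref{example-1}) by observing that every class decomposes into crossings, each crossing being a basic ``brick'' (with Remark~\ref{intersect} handling exchanges in position $i>1$ as conjugates ${\bf b}_{i-1}^{-1}\cdots{\bf b}_1^{-1}{\bf b}{\bf b}_1\cdots{\bf b}_{i-1}$, just as in your general-position decomposition), deriving (4) and (5) from the classical Artin relations on each of the $p$ symmetric sectors, and checking (1)--(3) by explicit pictures; for general $p$ it then says only that one replaces ${\bf b}^2=e$ and ${\bf b}{\bf b}_1{\bf b}{\bf b}_1={\bf b}_1{\bf b}{\bf b}_1{\bf b}$ by ${\bf b}^p=e$ and $({\bf b}{\bf b}_1)^p=({\bf b}_1{\bf b})^p$. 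Your justifications are the same as the paper's: ${\bf b}^p=e$ because the full loop $s\mapsto(1+{\bf i})e^{2\pi{\bf i}s}$ contracts radially through the fixed origin while staying in $F_{\mathbb{Z}_p}(\mathbb{C},n)$ (the paper phrases this as ``any twine of an orbit string around $O$ can be undone since the origin is fixed''), and relation (2) is seen by taking the representative $\beta$ of (\ref{path-5}) so that no string actually meets $O$.

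The one place you genuinely diverge is part (iii), completeness of the relation set, and here you should be aware that the paper simply does not address it: both for $p=2$ and for general $p$ the text asserts the presentation without ever showing that no further relations are needed. Your proposed route --- map the abstractly presented group onto $\mathcal{B}_n^{\text{orb}}(\mathbb{C},\mathbb{Z}_p)$, push forward along $\Phi$ to $\mathbb{Z}_p^{\times n}\rtimes_\varphi\Sigma_n$, compute the kernel by Reidemeister--Schreier, and match it against a known presentation of $\mathcal{P}_n(\mathbb{C},\mathbb{Z}_p)=\pi_1(F_{\mathbb{Z}_p}(\mathbb{C},n))$ --- is a sensible and standard strategy, and it is more than the paper offers. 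But as you yourself say, it is a plan rather than an argument: the Reidemeister--Schreier computation and the relator-by-relator comparison are exactly where the content lies, and until they are carried out your proof, like the paper's, establishes only that the listed words are relations and that the ${\bf b}_k$ and ${\bf b}$ generate, not that the presentation is complete. So the honest summary is: same approach as the paper on everything the paper actually proves, plus an unexecuted (but correctly aimed) sketch of the missing completeness step.
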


\subsection{Orbit braid group of $F_{\mathbb{Z}_p}(\mathbb{C}^\times,n)$}
In the similar way to the case of $B_n^{\text{orb}}(\mathbb{C},\mathbb{Z}_p)$, we can describe $B_n^{\text{orb}}(\mathbb{C}^\times,\mathbb{Z}_p)$. In this case,  a family of basic "bricks" named after ${\bf b}_k$ and ${\bf b'}$ can also be constructed by the paths  $\alpha^{(k)}$ in (\ref{path-1}) and $\beta$ in (\ref{path-5}). Here we make sure by using $\widetilde{c(\beta)}$ as a representative of ${\bf b}'$ that $p$ ordinary strings of the first orbit string in $\widetilde{c(\beta)}$ must  not intersect to each other because  $\mathbb{Z}_p$ acts freely on $\mathbb{C}^\times$, as shown in the following picture for the case of $p=2$:

\begin{tikzpicture}
\draw [line width=0.04cm] (-7,0)--(7,0);
\node[right] at (7,0) {$t=0$};
\draw [line width=0.04cm] (-7,-2)--(7,-2);
\node[right] at (7,-2) {$t=1$};
\draw [dashed] (0,1)--(0,-3);
\node[above] at (0,1) {$O$};
\draw (0.5,0)--(0.1,-0.8);
\draw (-0.1,-1.2)--(-0.5,-2);
\node[above] at (0.5,0) {$\widehat{1}$};
\draw (-0.5,0)--(0.5,-2);
\node[above] at (-0.5,0) {$\widehat{-1}$};

\draw (2,0)--(2,-2);
\node[above] at (2,0) {$\widehat{2}$};
\draw (-2,0)--(-2,-2);
\node[above] at (-2,0) {$\widehat{-2}$};

\draw (6.5,0)--(6.5,-2);
\node[above] at (6.5,0) {$\widehat{n}$};
\draw (-6.5,0)--(-6.5,-2);
\node[above] at (-6.5,0) {$\widehat{-n}$};

\draw[dotted] (4,-1)--(4.5,-1);
\draw[dotted] (-4,-1)--(-4.5,-1);

\node[below] at (0,-3) {$\widetilde{c(\beta)}$};
\end{tikzpicture}

Clearly, ${\bf b}_k$ and ${\bf b'}$ have the same expressions in  $B_n^{\text{orb}}(\mathbb{C},\mathbb{Z}_p)$ and $B_n^{\text{orb}}(\mathbb{C}^\times,\mathbb{Z}_p)$.

\vskip .2cm
On ${\bf b}_k$, we see that there is not any direct twine among $p$ symmetric parts with respect to $O$ in $\widetilde{c(\alpha^{(k)})}$, and only thing that happens is that two strings within each symmetric part do an exchange of ending points. So  ${\bf b}_k$ in  $B_n^{\text{orb}}(\mathbb{C},\mathbb{Z}_p)$ has the same property as  in $B_n^{\text{orb}}(\mathbb{C}^\times,\mathbb{Z}_p)$.

\vskip .2cm
Since we are working on the case of $\mathbb{C}^\times$ with a free $\mathbb{Z}_p$-action, this means that we cannot undo any twine of an orbit string surround the line $O$  of an orbit braid, so
${\bf b'}$ in $B_n^{\text{orb}}(\mathbb{C}^\times,\mathbb{Z}_p)$ would have an essential difference from ${\bf b}'$ in $B_n^{\text{orb}}(\mathbb{C},\mathbb{Z}_2)$.
However, this essential difference of ${\bf b}'$ in $B_n^{\text{orb}}(\mathbb{C}^\times,\mathbb{Z}_p)$
and $B_n^{\text{orb}}(\mathbb{C},\mathbb{Z}_p)$  only brings a little bit  difference  on the structures of two orbit braid groups.
Actually, the only difference is that the order of ${\bf b}'$ in $B_n^{\text{orb}}(\mathbb{C}^\times,\mathbb{Z}_p)$ is different from that of ${\bf b}'$ in  $B_n^{\text{orb}}(\mathbb{C},\mathbb{Z}_p)$. We see by a direct observation that ${\bf b}'$ in $B_n^{\text{orb}}(\mathbb{C}^\times,\mathbb{Z}_p)$ is an element of infinite order. Thus we have that

\begin{proposition}
	$B_n^{\text{orb}}(\mathbb{C}^\times ,\mathbb{Z}_p)$ is genereated by ${\bf b_k}$ $(1\leq k\leq n-1)$ and ${\bf b'}$, with relations:
	\begin{itemize}
		\item[$(1)$]$({\bf b'}{\bf b}_1)^p=({\bf b}_1{\bf b'})^p$;
		\item[$(2)$]${\bf b}_k{\bf b'}={\bf b'}{\bf b}_k \quad( k>1 )$;
		\item[$(3)$]${\bf b}_k{\bf b}_{k+1}{\bf b}_k={\bf b}_{k+1}{\bf b}_k{\bf b}_{k+1}$;
		\item[$(4)$]${\bf b}_k{\bf b}_l={\bf b}_l{\bf b}_k \quad(|k-l|>1)$.
	\end{itemize}
\end{proposition}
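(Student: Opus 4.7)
The plan is to mirror the proof sketched for $\mathcal{B}_n^{\text{orb}}(\mathbb{C}, \mathbb{Z}_p)$, adapting only those steps where the fixed point of the action actually played a role. First I would show that $\mathbf{b}_1,\dots,\mathbf{b}_{n-1},\mathbf{b}'$ generate the group. Any representative path $\alpha:I\to F_{\mathbb{Z}_p}(\mathbb{C}^\times,n)$ can be put in general position so that its orbit diagram $\widetilde{c(\alpha)}$ is a finite concatenation of two kinds of elementary events: crossings of two neighboring orbit strings contributing $\mathbf{b}_k^{\pm 1}$, and passes of the first orbit string around the axis $O$ contributing $\mathbf{b}'^{\pm 1}$. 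A pass of the $i$-th string around $O$ can be pushed to a pass of the first string by conjugation with $\mathbf{b}_{i-1}\cdots\mathbf{b}_1$, exactly as in Remark~\ref{intersect}, so generation follows.

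Next I would verify the four relations pictorially, each reducing to an equivariant isotopy of orbit diagrams which in turn, by Proposition~\ref{sn-condition}, amounts to a rel-$\partial I$ homotopy of paths in $F_{\mathbb{Z}_p}(\mathbb{C}^\times,n)$. Relations $(3)$ and $(4)$ involve only the $\mathbf{b}_k$'s: they are supported in an annular strip avoiding $O$ and the argument is identical to the classical Artin braid relations. Relation $(2)$ is immediate since $\mathbf{b}_k$ for $k>1$ and $\mathbf{b}'$ have disjoint geometric supports. For relation $(1)$, I would choose $\widetilde{c(\beta)}$ of \eqref{path-5} as representative of $\mathbf{b}'$, so the first orbit string avoids $O$; the element $(\mathbf{b}'\mathbf{b}_1)^p$ then represents a full $2\pi$-rotation of the first two orbit strings around $O$, which equals $(\mathbf{b}_1\mathbf{b}')^p$ by the obvious equivariant isotopy.

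The main obstacle is to prove that no further relation is forced; equivalently, that $\mathbf{b}'$ has infinite order (contrast with the case of $\mathbb{C}$, where $\mathbf{b}'^p=e$ because the twine around $O$ could be pulled across the fixed point). My plan here is to invoke Corollary~\ref{free-2}: since $\mathbb{Z}_p$ acts freely on $\mathbb{C}^\times$, we have a group isomorphism
\[
\mathcal{B}_n^{\text{orb}}(\mathbb{C}^\times,\mathbb{Z}_p)\ \cong\ B_n(\mathbb{C}^\times/\mathbb{Z}_p).
\]
The quotient $\mathbb{C}^\times/\mathbb{Z}_p$ is homeomorphic to $\mathbb{C}^\times$, so the target is the braid group of a once-punctured plane, a group for which a classical presentation is available in the literature (it is exactly the one in the statement, with a single ``loop-around-the-puncture'' generator of infinite order and $n-1$ Artin generators). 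I would identify the image of $\mathbf{b}_k$ with the standard Artin generator $\sigma_k$ of $B_n(\mathbb{C}^\times)$ and the image of $\mathbf{b}'$ with the generator encircling the puncture, and check that the four listed relations correspond to the defining relations of $B_n(\mathbb{C}^\times)$.

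Finally, having established that the given relations hold and that the abstract group they define surjects onto $\mathcal{B}_n^{\text{orb}}(\mathbb{C}^\times,\mathbb{Z}_p)\cong B_n(\mathbb{C}^\times)$, I would conclude by matching this surjection with the known presentation of $B_n(\mathbb{C}^\times)$, so that the surjection is in fact an isomorphism. The hard part is precisely this last matching step: pinning down why no further relation can hold in a purely orbit-braid-theoretic argument requires the free-action input of Corollary~\ref{free-2}, whereas all the remaining content is a routine transcription of the $\mathbb{C}$-case proof of Proposition~\ref{example-1} with the relation $\mathbf{b}^p=e$ deleted.
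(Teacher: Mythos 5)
Your overall architecture --- generation by a general-position argument, pictorial verification of the relations, and completeness via Corollary~\ref{free-2} together with the known presentation of the braid group of the punctured plane --- is a genuinely different route from the paper's, which simply transcribes the $\mathbb{C}$-case discussion and deletes ${\bf b}^p=e$ on the grounds that ${\bf b}'$ now has infinite order. The generation step and relations $(2)$--$(4)$ are fine. But the step you defer as ``the hard matching'' is not merely hard: carried out honestly, your own strategy shows that relation $(1)$ cannot hold when $p$ is odd. Since $\mathbb{C}^\times/\mathbb{Z}_p\cong\mathbb{C}^\times$, Corollary~\ref{free-2} gives $\mathcal{B}_n^{orb}(\mathbb{C}^\times,\mathbb{Z}_p)\cong B_n(\mathbb{C}^\times)\cong Br(B_n)$ \emph{independently of $p$}; under your identification ${\bf b}'\mapsto\tau$ (loop around the puncture) and ${\bf b}_k\mapsto\sigma_k$, the classical relation is $(\tau\sigma_1)^2=(\sigma_1\tau)^2$, not the exponent-$p$ relation. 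Writing $z=(\tau\sigma_1)^2$, which is central in $\langle\tau,\sigma_1\rangle\leq Br(B_n)$, one gets $(\tau\sigma_1)^p=z^{(p-1)/2}\tau\sigma_1$ and $(\sigma_1\tau)^p=z^{(p-1)/2}\sigma_1\tau$ for odd $p$, and these differ because $\tau\sigma_1\neq\sigma_1\tau$. So the presentation in the statement is the classical presentation of $B_n(\mathbb{C}^\times)$ only for $p=2$, and your assertion that they coincide is false for $p\geq 3$.

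That relation $(1)$ genuinely fails for odd $p$ can be seen inside the paper's own formalism, with no appeal to the literature: apply the epimorphism $\Phi$ of Lemma~\ref{epi}. One has $\Phi({\bf b}')=((\omega,1,\dots,1),e)$ with $\omega=e^{2\pi{\bf i}/p}$ and $\Phi({\bf b}_1)=(e,(1\,2))$, whence for odd $p$, $\Phi\bigl(({\bf b}'{\bf b}_1)^p\bigr)=((\omega^{(p+1)/2},\omega^{(p-1)/2},1,\dots,1),(1\,2))$ while $\Phi\bigl(({\bf b}_1{\bf b}')^p\bigr)=((\omega^{(p-1)/2},\omega^{(p+1)/2},1,\dots,1),(1\,2))$; these are distinct since $\omega\neq 1$. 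Hence no equivariant isotopy realizing $(1)$ exists for odd $p\geq 3$, your pictorial verification of $(1)$ would fail, and the surjection from the abstractly presented group onto $\mathcal{B}_n^{orb}(\mathbb{C}^\times,\mathbb{Z}_p)$ cannot be an isomorphism as stated (the same computation shows the analogous relation claimed for $\mathcal{B}_n^{orb}(\mathbb{C},\mathbb{Z}_p)$ is also false for odd $p$). The relation actually forced by Corollary~\ref{free-2} is $({\bf b}'{\bf b}_1)^2=({\bf b}_1{\bf b}')^2$ for every $p$. This defect is inherited from the statement itself, whose only justification in the paper is the informal remark that the identity ``can be seen intuitively without any obstruction''; your covering-space strategy is exactly the tool that detects the problem, but as written your proof asserts the false matching rather than drawing that conclusion. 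For $p=2$ your argument goes through and recovers the paper's identification $\mathcal{B}_n^{orb}(\mathbb{C}^\times,\mathbb{Z}_2)\cong Br(B_n)$.
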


\subsection{Orbit braid group of $F_{\mathbb{Z}^2_2}(\mathbb{C},n)$}

Identify $\mathbb{C}$ with $\mathbb{R}^2$, the action $(\mathbb{Z}_2)^2\curvearrowright^{\phi_2}\mathbb{C}$ is just the standard
$\mathbb{Z}_2^2$-representation on $\mathbb{R}^2$, such that $\mathbb{Z}_2^2$ is generated by two reflections $g^x$  and $g^y$  with respect to $x$-axis and $y$-axis, respectively. For a path $\alpha=(\alpha_1,\dots,\alpha_n)$ in $F_{\mathbb{Z}_2^2}(\mathbb{C},n)$ with $\alpha(0)={\bf z}$ and $\alpha(1)=g{\bf z}_\sigma$, the correspoding orbit braid $\widetilde{c(\alpha)}=\coprod_{i=1}^n \widetilde{c(\alpha_i)}$ is symmetric with respect to the line $O$, $x$-axis$\times I$  and $y$-axis$\times I$ in $\mathbb{C}\times I$, where $$\widetilde{c(\alpha_i)}=\{(\alpha_i(s), s), (-\alpha_i(s), s),(\overline{\alpha_i(s)}, s),(-\overline{\alpha_i(s)}, s),|s\in I\}$$ and $\overline{\alpha_i(s)}$ means the conjugacy   of $\alpha_i(s)$.

\vskip .2cm
Based upon the symmetries of the orbit braids in $B_n^{\text{orb}}(\mathbb{C},\mathbb{Z}_2^2)$,  we construct a family of basic "bricks" named after ${\bf b}_k$,  ${\bf b}^x$ and ${\bf b}^y$ as follows:
\begin{itemize}
\item[$(1)$]  ${\bf b}_k$ is chosen as $[\widetilde{c(\alpha^{(k)})}]$ where $\alpha^{(k)}$ is the path in (\ref{path-1});
\item[$(2)$]  ${\bf b}^x$ is chosen as $[\widetilde{c(\alpha^x)}]$ where $\alpha^x$ is the path given by
$$\alpha^x(s)=(1+(1-2s){\bf i}, 2+2{\bf i}, ..., n+n{\bf i})$$
such that $\alpha^x_1$ and $\overline{\alpha^x_1}$ intersect at $x$-axis$\times I$;
\item[$(3)$] ${\bf b}^y$ is chosen as $[\widetilde{c(\alpha^y)}]$ where $\alpha^y$ is the path given by
$$\alpha^y(s)=((1-2s)+{\bf i}, 2+2{\bf i}, ..., n+n{\bf i})$$
such that $\alpha^y_1$ and $-\overline{\alpha^y_1}$ intersect at $y$-axis$\times I$.
\end{itemize}

\begin{proposition}
$B_n^{\text{orb}}(\mathbb{C},\mathbb{Z}_2^2)$ is genereated by ${\bf b}_k$ $(1\leq k\leq n-1)$,  ${\bf b}^x$ and ${\bf b}^y$ with relations
\begin{itemize}
\item[$(1)$] $({\bf b}^x)^2=({\bf b}^y)^2=e$;
\item[$(2)$]${\bf b}^x{\bf b}^y={\bf b}^y{\bf b}^x$;
\item[$(3)$]${\bf b}^x{\bf b}_1{\bf b}^x{\bf b}_1={\bf b}_1{\bf b}^x{\bf b}_1{\bf b}^x, \quad {\bf b}^y{\bf b}_1{\bf b}^y{\bf b}_1={\bf b}_1{\bf b}^y{\bf b}_1{\bf b}^y $;
\item[$(4)$]${\bf b}_k{\bf b}^x={\bf b}^x{\bf b}_k, \quad  {\bf b}_k{\bf b}^y={\bf b}^y{\bf b}_k \quad(k>1)$;
\item[$(5)$]${\bf b}_k{\bf b}_{k+1}{\bf b}_k={\bf b}_{k+1}{\bf b}_k{\bf b}_{k+1}$;
\item[$(6)$]${\bf b}_k{\bf b}_l={\bf b}_l{\bf b}_k \quad (|k-l|>1)$.
\end{itemize}
\end{proposition}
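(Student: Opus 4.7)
The plan is to follow the blueprint of Proposition~3.1 for $\mathcal{B}_n^{\text{orb}}(\mathbb{C},\mathbb{Z}_p)$, adapting the arguments to the richer $(\mathbb{Z}_2)^2$-symmetry. I would first establish generation, then verify each of the six relations via explicit isotopies, and finally address completeness of the relations.

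For generation, given any class $[\widetilde{c(\alpha)}]$, I would perturb a representative $\alpha$ (using Proposition~\ref{sn-condition}) into general position so that the projection of $\widetilde{c(\alpha)}$ onto $\mathbb{C}\times I$ exhibits only isolated transverse events of three kinds: (i) two distinct orbit strings meet inside the open first quadrant (a natural fundamental domain of the $(\mathbb{Z}_2)^2$-action), (ii) a single orbit string crosses the $x$-axis away from the origin, or (iii) a single orbit string crosses the $y$-axis away from the origin. An event of type (i) between strings $k$ and $k+1$ contributes a conjugate of ${\bf b}_k$, while types (ii) and (iii) contribute conjugates of ${\bf b}^x$ and ${\bf b}^y$ respectively. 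A passage through the origin, the unique point of isotropy $(\mathbb{Z}_2)^2$, may always be split into a type-(ii) event and a type-(iii) event by a tiny perturbation. As in the $\mathbb{Z}_p$-case, the conjugating words can then be absorbed using the Artin-type relations, reducing $[\widetilde{c(\alpha)}]$ to a word in ${\bf b}_k,{\bf b}^x,{\bf b}^y$.

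Next I would verify each relation by exhibiting an explicit homotopy of representative paths, in the spirit of Proposition~\ref{example-1}. Relation (1), $({\bf b}^x)^2=({\bf b}^y)^2=e$, follows because, after applying the operation $\bullet$ of \eqref{op}, the composite path moves only the first coordinate along a segment on the $x$- or $y$-axis and back, which bounds a small disk in $\mathbb{C}$ lying far from the orbits of $\widehat{2},\ldots,\widehat{n}$; this is exactly the $\mathbb{Z}_p$-argument specialised to $p=2$. Relation (2), ${\bf b}^x{\bf b}^y={\bf b}^y{\bf b}^x$, holds because after the $\bullet$-twist both sides send the first coordinate from $\widehat{1}$ to $-\widehat{1}$ along the two legs of an L-shaped path in the simply connected region of $\mathbb{C}$ obtained by deleting the other base orbits. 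Relations (3) and (4) are exact analogues of ${\bf b}_k{\bf b}={\bf b}{\bf b}_k$ and $({\bf b}{\bf b}_1)^p=({\bf b}_1{\bf b})^p$ from Proposition~\ref{example-1} (once with ${\bf b}^x$ in place of ${\bf b}$ and once with ${\bf b}^y$), and their pictorial isotopies transfer verbatim. The classical Artin relations (5) and (6) hold since ${\bf b}_k$ is an ordinary braid generator carried out inside the first quadrant, while its three mirror copies undergo symmetric but spatially disjoint motions.

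The main obstacle will be completeness, namely showing that no further relations are required. My plan is to exploit the short exact sequence of Theorem~\ref{main result},
\[
1\longrightarrow \mathcal{P}_n(\mathbb{C},(\mathbb{Z}_2)^2)\longrightarrow \mathcal{B}_n^{\text{orb}}(\mathbb{C},(\mathbb{Z}_2)^2)\xrightarrow{\Phi} ((\mathbb{Z}_2)^2)^{\times n}\rtimes_\varphi \Sigma_n\longrightarrow 1.
\]
The images $\Phi({\bf b}_k),\Phi({\bf b}^x),\Phi({\bf b}^y)$ are the standard Coxeter-type generators of the semidirect product, and a direct check shows that relations (1)--(6) modulo the kernel give a defining presentation of $((\mathbb{Z}_2)^2)^{\times n}\rtimes_\varphi \Sigma_n$. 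It then remains to show that the kernel $\mathcal{P}_n(\mathbb{C},(\mathbb{Z}_2)^2)\cong \pi_1(F_{(\mathbb{Z}_2)^2}(\mathbb{C},n))$ is generated, as a normal subgroup, by words already derivable from (1)--(6). I would carry this out by induction on $n$ via a Fadell--Neuwirth-style fibration $F_{(\mathbb{Z}_2)^2}(\mathbb{C},n)\to F_{(\mathbb{Z}_2)^2}(\mathbb{C},n-1)$ with fiber $\mathbb{C}$ minus $4(n-1)+1$ points (the orbits of the remaining base coordinates together with the origin), using that this fiber is a bouquet of circles and that its fundamental group is generated by explicit conjugates of ${\bf b}^x,{\bf b}^y$ and suitable ${\bf b}_k$-loops; the base case is $\pi_1(F_{(\mathbb{Z}_2)^2}(\mathbb{C},1))=\pi_1(\mathbb{C}\setminus\{0\})\cong\mathbb{Z}$, reflecting the single generator ${\bf b}^x{\bf b}^y$ of infinite order in $n=1$.
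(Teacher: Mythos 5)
Your first two stages (generation by decomposing a generic representative into isolated crossing events, and verification of relations (1)--(6) by explicit homotopies of representative paths) are essentially what the paper does: its entire proof of this proposition is that it ``is similar to that of Proposition~\ref{example-1} with only more cases involved,'' and Proposition~\ref{example-1} is proved exactly by the decomposition-into-bricks argument plus pictorial isotopies. That part of your proposal is sound and, if anything, more carefully argued than the source.

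The completeness section, which goes beyond anything the paper attempts, contains concrete errors. First, the forgetful map $F_{(\mathbb{Z}_2)^2}(\mathbb{C},n)\to F_{(\mathbb{Z}_2)^2}(\mathbb{C},n-1)$ is \emph{not} a fibration, because the action is non-free: the fibre over a configuration is $\mathbb{C}$ minus the orbits of the remaining coordinates, and the number of deleted points drops below $4(n-1)$ as soon as a coordinate sits on one of the axes, so the fibres do not have constant homotopy type. (This is precisely the difficulty the paper flags in its remark on orbit configuration spaces, where one either deletes the singular set or avoids the fibration.) Second, even over a free configuration the fibre is $\mathbb{C}$ minus $4(n-1)$ points, not $4(n-1)+1$: the origin is a fixed point, its orbit is $\{0\}$, and it meets no free orbit, so it is \emph{not} removed. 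Third, your base case is wrong: for $n=1$ there is no condition, so $F_{(\mathbb{Z}_2)^2}(\mathbb{C},1)=\mathbb{C}$ has trivial fundamental group, not $\mathbb{Z}$; and the claim that ${\bf b}^x{\bf b}^y$ has infinite order contradicts the very relations you are proving, since (1) and (2) give $({\bf b}^x{\bf b}^y)^2=({\bf b}^x)^2({\bf b}^y)^2=e$ --- geometrically, a loop of the first coordinate around the origin is null-homotopic because the origin is not excluded from the configuration space. Finally, the assertion that (1)--(6) descend to a defining presentation of $((\mathbb{Z}_2)^2)^{\times n}\rtimes_\varphi\Sigma_n$ is not a ``direct check'': for the wreath-product presentation one needs, e.g., that the image of ${\bf b}^x$ commutes with the image of ${\bf b}_1{\bf b}^y{\bf b}_1$ (the copies of $g^x$ and $g^y$ in \emph{different} coordinates), and this mixed relation is not formally among, nor obviously derivable from, the listed ones. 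To be fair, the paper never proves completeness of the relation set either, so on this point you are attempting more than the source; but the induction as set up would not go through without repairing all of the above.
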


\begin{proof}
The proof is similar to that of Proposition~\ref{example-1} with only more cases involved. We would like to leave it as an exercise to reader.
\end{proof}

\subsection{Relation with generalized braid group}

Require every generator in $\mathcal{B}_n^{\text{orb}}(\mathbb{C} ,\mathbb{Z}_2)$ or $B_n^{\text{orb}}(\mathbb{C}^\times,\mathbb{Z}_2)$ be of order 2, we get the transformation group of
ending points of orbit braids, which is $\mathbb{Z}_2^n\rtimes_\varphi \Sigma_n $.
 An easy argument shows that $\mathbb{Z}_2^n\rtimes_\varphi \Sigma_n$ is exactly isomorphic to finite Coxeter group $B_n$.
So, by Theorem~\ref{main result} we have the following short exact sequences:
\begin{equation}\label{e-1}
1\longrightarrow P_n(\mathbb{C},\mathbb{Z}_2)\longrightarrow B_n^{\text{orb}}(\mathbb{C},\mathbb{Z}_2)\longrightarrow B_n \longrightarrow 1
\end{equation}
and
\begin{equation}\label{e-2}
1\longrightarrow P_n(\mathbb{C}^\times,\mathbb{Z}_2)\longrightarrow B_n^{\text{orb}}(\mathbb{C}^\times,\mathbb{Z}_2)\longrightarrow B_n \longrightarrow 1.
\end{equation}
Since we have obtained the presentations of $B_n^{\text{orb}}(\mathbb{C},\mathbb{Z}_2)$ and $B_n^{\text{orb}}(\mathbb{C}^\times,\mathbb{Z}_2)$, we can naturally consider the calculations of $P_n(\mathbb{C},\mathbb{Z}_2)$ and $P_n(\mathbb{C}^\times,\mathbb{Z}_2)$.  Here we only pay our attention on the relations of these orbit braid groups with generalized braid groups, for the concept of generalized braid group,  see Appendix A.

\vskip .2cm
It was known in \cite{G} that two orbit configuration spaces $F_{\mathbb{Z}_2}(\mathbb{C},n)$ and $F_{\mathbb{Z}_2}(\mathbb{C}^\times,n)$ are classifying space of two generalized pure braid groups $P(D_n)$ and $P(B_n)$.
This means that the actions of $D_n$ and $B_n$ on $F_{\mathbb{Z}_2}(\mathbb{C},n)$ and $F_{\mathbb{Z}_2}(\mathbb{C}^\times,n)$ respectively are free. Of course, it is obvious that the action of  $B_n$ on $F_{\mathbb{Z}_2}(\mathbb{C}^\times,n)$ is free
since the action of $\mathbb{Z}_2$ on $\mathbb{C}^\times$ is free. With this point of view,  we have the following two short exact sequences:
\begin{equation}\label{e-3}
1\longrightarrow P(D_n)\longrightarrow Br(D_n)\longrightarrow D_n \longrightarrow 1
\end{equation}
and
\begin{equation}\label{e-4}
1\longrightarrow P(B_n)\longrightarrow Br(B_n)\longrightarrow B_n \longrightarrow 1.
\end{equation}
In addition, we also have that
$$P_n(\mathbb{C},\mathbb{Z}_2)\cong P(D_n)\cong \pi_1(F_{\mathbb{Z}_2}(\mathbb{C},n), {\bf z})$$ and $$P_n(\mathbb{C}^\times,\mathbb{Z}_2)\cong P(B_n)\cong \pi_1(F_{\mathbb{Z}_2}(\mathbb{C}^\times,n), {\bf z}).$$
Moreover, it should be interesting to know how there are the relations between the orbit braid groups $\mathcal{B}_n^{\text{orb}}(\mathbb{C},\mathbb{Z}_2)$, $B_n^{\text{orb}}(\mathbb{C}^\times,\mathbb{Z}_2)$ and the generalised braid groups $Br(D_n)$, $Br(B_n)$.
\vskip .2cm

First let us look at the case of $F_{\mathbb{Z}_2}(\mathbb{C}^\times,n)$. Since $P_n(\mathbb{C}^\times,\mathbb{Z}_2)\cong P(B_n)$, it follows that two short exact sequences (\ref{e-2}) and (\ref{e-4}) are essentially the same. Thus
 we have that
 \begin{proposition}
  $B_n^{\text{orb}}(\mathbb{C}^\times,\mathbb{Z}_2)$ is  isomorphic to the generalized braid group $Br(B_n)$.
  \end{proposition}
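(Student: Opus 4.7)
The plan is to establish the isomorphism by exhibiting a commutative ladder between the two short exact sequences (\ref{e-2}) and (\ref{e-4}) and then invoking the Five Lemma. That is, I would construct the diagram
\[
\begin{CD}
1 @>>> P_n(\mathbb{C}^\times,\mathbb{Z}_2) @>>> B_n^{\text{orb}}(\mathbb{C}^\times,\mathbb{Z}_2) @>>> B_n @>>> 1 \\
@. @VV\cong V @VV\Psi V @VV= V @. \\
1 @>>> P(B_n) @>>> Br(B_n) @>>> B_n @>>> 1,
\end{CD}
\]
where the left vertical arrow is the already-noted isomorphism $P_n(\mathbb{C}^\times,\mathbb{Z}_2)\cong P(B_n)$ obtained from Corollary~\ref{homotopy version}(3) together with the classifying-space result in~\cite{G}, and the right vertical arrow is the identity induced by the identification $\mathbb{Z}_2^n\rtimes_\varphi\Sigma_n\cong B_n$ used to obtain (\ref{e-1}).

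The main step is to produce the middle arrow $\Psi$ as an isomorphism of groups, and the cleanest way is to realize both groups as the fundamental group of the same orbit space. Since the action of $\mathbb{Z}_2$ on $\mathbb{C}^\times$ is free, Lemma~\ref{free action} tells us that the induced action of $\mathbb{Z}_2^n\rtimes_\varphi\Sigma_n$ on $F_{\mathbb{Z}_2}(\mathbb{C}^\times,n)$ is free; thus Corollary~\ref{free-1} identifies
\[
B_n^{\text{orb}}(\mathbb{C}^\times,\mathbb{Z}_2)\;\cong\;\pi_1\bigl(F_{\mathbb{Z}_2}(\mathbb{C}^\times,n)\,/\,(\mathbb{Z}_2^n\rtimes_\varphi\Sigma_n)\bigr).
\]
On the other hand, the result of~\cite{G} presents $F_{\mathbb{Z}_2}(\mathbb{C}^\times,n)$ as a classifying space for $P(B_n)$ on which $B_n$ acts freely with quotient whose fundamental group is $Br(B_n)$. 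Under the group isomorphism $\mathbb{Z}_2^n\rtimes_\varphi\Sigma_n\cong B_n$, the two actions on $F_{\mathbb{Z}_2}(\mathbb{C}^\times,n)$ coincide (both permute coordinates and apply $\pm$ signs in each factor), so the two orbit spaces are homeomorphic and $\Psi$ is defined as the resulting isomorphism of fundamental groups.

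The remaining task is to verify that the ladder commutes. The left square commutes because both left-hand maps are induced by the inclusion of the free subgroup $P(B_n)=P_n(\mathbb{C}^\times,\mathbb{Z}_2)$ as $\pi_1$ of the covering space $F_{\mathbb{Z}_2}(\mathbb{C}^\times,n)$; the right square commutes because both quotient maps correspond to the covering projection down to the same orbit space. With commutativity in hand, the Five Lemma applied in the category of groups forces $\Psi$ to be an isomorphism. The most delicate point, and the only place where care is needed, is to check that the identification $\mathbb{Z}_2^n\rtimes_\varphi\Sigma_n\cong B_n$ is compatible with the two geometric actions on $F_{\mathbb{Z}_2}(\mathbb{C}^\times,n)$ used to define the respective braid groups; once this compatibility is recorded, the rest of the argument is formal diagram chasing.
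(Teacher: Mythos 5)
Your proposal is correct and follows essentially the same route as the paper, which deduces the isomorphism by observing that the short exact sequences for $B_n^{\text{orb}}(\mathbb{C}^\times,\mathbb{Z}_2)$ and $Br(B_n)$ are "essentially the same" and, alternatively, by invoking Corollary~\ref{free-1} to realize the orbit braid group as $\pi_1$ of the orbit space $F_{\mathbb{Z}_2}(\mathbb{C}^\times,n)/(\mathbb{Z}_2^n\rtimes_\varphi\Sigma_n)$, which is Brieskorn's $X_W$ for $W=B_n$. Your write-up is in fact more careful than the paper's one-line assertion, since you explicitly supply the middle map of the ladder and check that the two group actions on $F_{\mathbb{Z}_2}(\mathbb{C}^\times,n)$ agree, which is the step the paper leaves implicit.
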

  This can also be seen from  Corollary~\ref{free-1}. In this case, the orbit braid group agrees with the generalized braid group.

  \vskip .2cm

  As for the case of $F_{\mathbb{Z}_2}(\mathbb{C}, n)$, compare two short exact sequences (\ref{e-1}) and (\ref{e-3}),
  we see that $P_n(\mathbb{C},\mathbb{Z}_2)\cong P(D_n)$ but $B_n\not\cong D_n$, so $\mathcal{B}_n^{\text{orb}}(\mathbb{C},\mathbb{Z}_2)$ and $Br(D_n)$ are not isomorphic.
  Next let us analyze the connection between $\mathcal{B}_n^{\text{orb}}(\mathbb{C},\mathbb{Z}_2)$ and $Br(D_n)$.

\vskip .2cm
It is well-known that the finite Coxeter group $D_n$ is generated by $s,\sigma_1,\dots,\sigma_{n-1}$ with relations:
\begin{itemize}
	\item[$(1)$] $s^2=\sigma_i^2=e$;
	\item[$(2)$]$(\sigma_i \sigma_{i+1})^3=e$;
	\item[$(3)$]$(s \sigma_2)^3=e$;
	\item[$(4)$]$(s \sigma_i)^2=e \quad(i\neq 2)$;
	\item[$(5)$]$(\sigma_i \sigma_j)^2=e \quad(|i-j|>1)$.
\end{itemize}
and the corresponding generalized braid group $Br(D_n)$ is generated by $s,\sigma_1,\dots,\sigma_{n-1}$ with relations:
\begin{itemize}
	\item[$(1)$]$\sigma_i\sigma_{i+1}\sigma_i=\sigma_{i+1}\sigma_i\sigma_{i+1}$;
	\item[$(2)$]$s \sigma_2s=\sigma_2s\sigma_2$;
	\item[$(3)$]$s \sigma_i=\sigma_is \quad(i\neq 2)$;
	\item[$(4)$]$\sigma_i \sigma_j=\sigma_j\sigma_i \quad(|i-j|>1)$.
\end{itemize}
\begin{remark}\label{$D_n$ action}
We see from this point of view that the generators of $Br(D_n)$ correspond to these of $D_n$. Since $s$ and $\sigma_i$ are elements of $D_n$,
we can observe  how  $D_n$ acts freely on $F_{\mathbb{Z}_2}(\mathbb{C},n)$  in terms of $\sigma_i$ and $s$.  For each
$(z_1, ..., z_n)\in F_{\mathbb{Z}_2}(\mathbb{C},n)$, if
 $$\sigma_i(z_1, ..., z_{i-1}, z_i, z_{i+1}, z_{i+2}, ..., z_n)=(z_1, ..., z_{i-1}, z_{i+1}, z_{i}, z_{i+2}, ..., z_n)$$
  (i.e.,  $\sigma_i$ only permutes $i$-th and $(i+1)$-th coordinates of $z$) and
  $$s(z_1, z_2, z_3,  ..., z_n)=(-z_2, -z_1, z_3, ..., z_n)$$
 (i.e.,  $s$ just transfers $z_1$ to $-z_2$ and $z_2$ to $-z_1$), then we can verify easily that these transformations exactly satisfy the relations (1)--(5) in $D_n$.
\end{remark}

Now by Remark~\ref{$D_n$ action}, the generator $s$ in $Br(D_n)$ can be regarded as the class ${\bf b}{\bf b}_1{\bf b}$ in $\mathcal{B}_n^{\text{orb}}(\mathbb{C} ,\mathbb{Z}_2)$, and each $\sigma_i$ can be regarded as the class ${\bf b}_i$ in $\mathcal{B}_n^{\text{orb}}(\mathbb{C} ,\mathbb{Z}_2)$. Thus, we can define a map $$f: Br(D_n)\longrightarrow
\mathcal{B}_n^{\text{orb}}(\mathbb{C} ,\mathbb{Z}_2)$$ by $f(s)={\bf b}{\bf b}_1{\bf b}$ and $f(\sigma_i)={\bf b}_i$. A direct check shows that $f$ is a monomorphism.

\begin{proposition}
$Br(D_n)$ is isomorphic to a subgroup of $\mathcal{B}_n^{\text{orb}}(\mathbb{C} ,\mathbb{Z}_2)$.
\end{proposition}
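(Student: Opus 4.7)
The plan is to show that the assignment $f(s) := \mathbf{b}\mathbf{b}_1\mathbf{b}$ and $f(\sigma_i) := \mathbf{b}_i$ extends to a well-defined group homomorphism $f : Br(D_n) \to \mathcal{B}_n^{\text{orb}}(\mathbb{C}, \mathbb{Z}_2)$, and then to deduce injectivity by comparing two short exact sequences via the five lemma.

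First I would verify the four defining relations of $Br(D_n)$ for the images. The braid relation among the $\sigma_i$ and their far-commutations follow immediately from relations (4) and (5) of $\mathcal{B}_n^{\text{orb}}(\mathbb{C}, \mathbb{Z}_2)$. The commutation $s\sigma_i = \sigma_i s$ for $i \neq 2$ splits into the case $i \geq 3$, handled using $\mathbf{b}_i \mathbf{b} = \mathbf{b}\mathbf{b}_i$ (relation (3)) together with the far-commutation of $\mathbf{b}_i$ and $\mathbf{b}_1$, and the case $i = 1$, which simplifies directly to $(\mathbf{b}\mathbf{b}_1)^2 = (\mathbf{b}_1\mathbf{b})^2$, that is, relation (2). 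The mixed braid relation $s\sigma_2 s = \sigma_2 s\sigma_2$ is the only serious computation: expanding the left side to $\mathbf{b}\mathbf{b}_1\mathbf{b}\mathbf{b}_2\mathbf{b}\mathbf{b}_1\mathbf{b}$, the hypotheses $\mathbf{b}_2\mathbf{b} = \mathbf{b}\mathbf{b}_2$ and $\mathbf{b}^2 = e$ reduce it to $\mathbf{b}(\mathbf{b}_1\mathbf{b}_2\mathbf{b}_1)\mathbf{b}$, and then the braid relation $\mathbf{b}_1\mathbf{b}_2\mathbf{b}_1 = \mathbf{b}_2\mathbf{b}_1\mathbf{b}_2$ followed by the symmetric re-expansion yields the right side.

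Next I would compute $\Phi \circ f$ to locate the image inside $\mathbb{Z}_2^n \rtimes_\varphi \Sigma_n$. A direct calculation with the semidirect-product law $(g,\sigma)\cdot(h,\tau) = (gh_\sigma, \sigma\tau)$ gives $\Phi(\mathbf{b}_i) = (e, (i,i{+}1))$ and $\Phi(\mathbf{b}\mathbf{b}_1\mathbf{b}) = ((-1,-1,1,\ldots,1), (1\,2))$, which are precisely the standard generators of the Coxeter subgroup $D_n \subset \mathbb{Z}_2^n \rtimes_\varphi \Sigma_n$ described in Remark~\ref{$D_n$ action}. Hence $\Phi \circ f$ factors as $Br(D_n) \twoheadrightarrow D_n \hookrightarrow \mathbb{Z}_2^n \rtimes_\varphi \Sigma_n$, so $f$ sends the kernel $P(D_n)$ into $\ker\Phi = \mathcal{P}_n(\mathbb{C},\mathbb{Z}_2)$ and produces a morphism of short exact sequences
\[
\begin{CD}
1 @>>> P(D_n) @>>> Br(D_n) @>>> D_n @>>> 1 \\
@. @VV{\widetilde{f}}V @VVfV @VVV @. \\
1 @>>> \mathcal{P}_n(\mathbb{C},\mathbb{Z}_2) @>>> \mathcal{B}_n^{\text{orb}}(\mathbb{C},\mathbb{Z}_2) @>{\Phi}>> \mathbb{Z}_2^n \rtimes_\varphi \Sigma_n @>>> 1
\end{CD}
\]
in which $\widetilde{f}$ is the restriction of $f$ and the right-hand vertical arrow is injective.

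Finally, the result of \cite{G} that $F_{\mathbb{Z}_2}(\mathbb{C},n)$ is a $K(P(D_n),1)$, together with Corollary~\ref{homotopy version}(3), gives a canonical isomorphism $\mathcal{P}_n(\mathbb{C},\mathbb{Z}_2) \cong \pi_1(F_{\mathbb{Z}_2}(\mathbb{C},n)) \cong P(D_n)$. Matching this isomorphism to $\widetilde{f}$ on the standard pure-braid generators of $P(D_n)$ forces $\widetilde{f}$ itself to be an isomorphism, and a diagram chase then yields injectivity of $f$, exhibiting $\im(f)$ as a subgroup of $\mathcal{B}_n^{\text{orb}}(\mathbb{C},\mathbb{Z}_2)$ isomorphic to $Br(D_n)$. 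The main obstacle will be pinning down $\widetilde{f}$ as the canonical isomorphism: the classical pure-braid generators of $P(D_n)$ must be realised as explicit pure orbit braid classes in $\mathcal{P}_n(\mathbb{C},\mathbb{Z}_2)$ and matched, rather than merely invoking the abstract existence of an isomorphism.
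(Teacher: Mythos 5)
Your construction of $f$ is exactly the paper's: the same images $f(s)=\mathbf{b}\mathbf{b}_1\mathbf{b}$, $f(\sigma_i)=\mathbf{b}_i$, and your verification of the four defining relations of $Br(D_n)$ (including the reduction of $s\sigma_2 s=\sigma_2 s\sigma_2$ via $\mathbf{b}^2=e$ and $\mathbf{b}\mathbf{b}_2=\mathbf{b}_2\mathbf{b}$) is correct and in fact more detailed than the paper, which only asserts that ``a direct check shows that $f$ is a monomorphism.'' Your computation $\Phi(\mathbf{b}\mathbf{b}_1\mathbf{b})=((-1,-1,1,\dots,1),(1\,2))$ is also right, and the four-lemma reduction of injectivity of $f$ to injectivity of $\widetilde{f}\colon P(D_n)\to\mathcal{P}_n(\mathbb{C},\mathbb{Z}_2)$ is sound (note you only need $\widetilde{f}$ injective, not bijective). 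The problem is the last step. From the fact that $F_{\mathbb{Z}_2}(\mathbb{C},n)$ is a $K(P(D_n),1)$ you get an \emph{abstract} isomorphism $P(D_n)\cong\mathcal{P}_n(\mathbb{C},\mathbb{Z}_2)$, but the existence of some isomorphism between the source and target of $\widetilde{f}$ says nothing about $\widetilde{f}$ itself: a homomorphism between abstractly isomorphic groups can easily fail to be injective. ``Matching this isomorphism to $\widetilde{f}$ on the standard pure-braid generators'' is precisely the content that is missing --- one would have to exhibit the standard generators of $P(D_n)$ as explicit words in $s,\sigma_i$, compute their images under $f$ as orbit braids, and identify these with the geometric generators of $\pi_1(F_{\mathbb{Z}_2}(\mathbb{C},n),\mathbf{x})$ under the $K(\pi,1)$ identification. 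You flag this yourself as ``the main obstacle,'' and as written it remains an unproved assertion, so the argument is incomplete at its decisive point. (Even granting surjectivity of $\widetilde{f}$, deducing injectivity would further require a Hopfian-type property of $P(D_n)$, which you do not invoke.)

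There is a route, using only tools already in the paper, that closes this gap without any generator matching. The orbit configuration space $F_{\mathbb{Z}_2}(\mathbb{C},n)$ is literally the reflection-arrangement complement $Y_{D_n}=\{(z_1,\dots,z_n): z_i\neq\pm z_j\ \text{for}\ i\neq j\}$, and the Coxeter group $D_n$, viewed as the index-two subgroup of $\mathbb{Z}_2^n\rtimes_\varphi\Sigma_n$ of signed permutations with an even number of sign changes, acts freely on it. Hence the subgroup $\Phi^{-1}(D_n)\leq\mathcal{B}_n^{\text{orb}}(\mathbb{C},\mathbb{Z}_2)$ is the extended fundamental group $\pi_1^E\bigl(F_{\mathbb{Z}_2}(\mathbb{C},n),\mathbf{x},D_n(\mathbf{x})\bigr)$, which by the covering-space property (B) of Section 4 is isomorphic to $\pi_1\bigl(Y_{D_n}/D_n\bigr)$, and this is $Br(D_n)$ by the Brieskorn--Deligne theorem quoted in Appendix A. That already proves the proposition as stated. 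If you additionally want to know that \emph{your} map $f$ is injective with image $\Phi^{-1}(D_n)$, you must still check that the covering-space isomorphism carries Brieskorn's generators of $\pi_1(Y_{D_n}/D_n)$ to $\mathbf{b}_i$ and $\mathbf{b}\mathbf{b}_1\mathbf{b}$ --- which is exactly the lifting computation your plan defers.
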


\vskip .2cm
\noindent {\bf Comments.} The presentation of the generalized braid group $Br(D_n)$ is given easily by $D_n$ (see~\cite{B2, D}). At the same time, the presentation of the orbit braid group $B_n^{\text{orb}}(\mathbb{C},\mathbb{Z}_2)$ as described in Proposition~\ref{example-1} is much more geometric and intuitive, but not so easy to compute the group structure. Indeed, $B_n^{\text{orb}}(\mathbb{C}, \mathbb{Z}_2)$ is a bigger group than $Br(D_n)$.

\section{Extended fundamental groups of topological spaces}\label{extended fg}
Let $X$ be a path-connected topological space and let $\text{Homeo}(X)$ be the group given by all homeomorphisms from $X$ to itself. Here for convenience, $\text{Homeo}(X)$ is written as $\mathbb{G}(X)$ simply.  When $X$ is a connected smooth manifold, we may consider the group given by all diffeomorphisms from $X$ to itself.

\vskip .2cm
Fix a point $x_0$ in $X$ as a base point.
Choose a point $x$ in the orbit $\mathbb{G}(X)(x_0)$ at $x_0$, then we know from \cite{Br} that there uniquely exists a coset
$g\mathbb{G}(X)_{x_0}\in \mathbb{G}(X)/\mathbb{G}(X)_{x_0}$ such that for any $h\in g\mathbb{G}(X)_{x_0}$, $x=hx_0$, where $\mathbb{G}(X)_{x_0}$ is the isotropy subgroup at $x_0$. Throughout the following discussion, for each point $x\in \mathbb{G}(X)(x_0)$, we always fix a representative in its corresponding  coset, denoted by $g_x$, such that
$x$ has uniquely an expression $g_xx_0$. If
$x_0$ is of free orbit type, then $\mathbb{G}(X)_{x_0}=\{e\}$ so $g_x$ has only a unique choice.

\vskip .2cm

Now, for two paths $\alpha, \beta:I\longrightarrow X$  such that $\alpha(0)=\beta(0)=x_0$ and
$\alpha(1), \beta(1)\in\mathbb{G}(X)(x_0)$, we can produce a new path  $\alpha\circ (g_{\alpha(1)}\beta)$ in the usual  way:
 $$\alpha\circ (g_{\alpha(1)}\beta)(s)=\begin{cases}
 \alpha(2s) & \text{if} s\in [0, {1\over 2}]\\
 g_{\alpha(1)}\beta(2s-1) & \text{if} s\in [{1\over 2}, 1]
 \end{cases}
 $$
 with $\alpha\circ (g_{\alpha(1)}\beta)(0)=x_0$ and $\alpha\circ (g_{\alpha(1)}\beta)(1)=(g_{\alpha(1)} g_{\beta(1)})x_0$.
 Moreover, we define the operation $\bullet$ on the classes of paths up to homotopy relative to $\partial I$ as follows:
$$[\alpha]\bullet[\beta]=[\alpha\circ (g_{\alpha(1)}\beta)].$$
Note that $[\beta]\bullet[\alpha]=[\beta\circ(g_{\beta(1)}\alpha)]$.
Then we see that
$$\pi_1^E(X, x_0, \mathbb{G}(X)(x_0))=\big\{[\alpha]\big| \alpha:I\longrightarrow X \text{ with $\alpha(0)=x_0$ and $\alpha(1)\in\mathbb{G}(X)(x_0)$}\big\}$$
forms a group under the operation $\bullet$.

\begin{definition}
The group $\pi_1^E(X, x_0, \mathbb{G}(X)(x_0))$ is said to be the {\bf extended fundamental group} of $X$ at $\mathbb{G}(X)(x_0)$.
\end{definition}

Obviously, the extended fundamental group does depend upon the choices of representatives $g_x$ in the  expressions
$x=g_xx_0$,  $x\in \mathbb{G}(X)(x_0)$.

\vskip .2cm
The map $\Delta:\pi_1^E(X, x_0, \mathbb{G}(X)(x_0))\longrightarrow \mathbb{G}(X)/\mathbb{G}(X)_{x_0}$ defined by $[\alpha]\longmapsto g_{\alpha(1)}\mathbb{G}(X)_{x_0}$ is surjective since $X$ is path-connected, and the preimage of $\mathbb{G}(X)_{x_0}$ is exactly  the fundamental group $\pi_1(X, x_0)$.
Thus we have that

\begin{theorem} \label{efg-main}
There is the following short exact sequence
\begin{equation}\label{es}
1\longrightarrow \pi_1(X,x_0)\longrightarrow \pi_1^E(X, x_0, \mathbb{G}(X)(x_0))\longrightarrow \mathbb{G}(X)/\mathbb{G}(X)_{x_0}\longrightarrow 1.
\end{equation}
\end{theorem}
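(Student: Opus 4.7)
The plan is to assemble (\ref{es}) from three ingredients, most of which are already spelled out in the discussion preceding the theorem.

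First, I would verify that the map $\Delta\colon \pi_1^E(X,x_0,\mathbb{G}(X)(x_0))\to \mathbb{G}(X)/\mathbb{G}(X)_{x_0}$, $[\alpha]\mapsto g_{\alpha(1)}\mathbb{G}(X)_{x_0}$, is a well-defined group homomorphism. Well-definedness on homotopy classes rel $\partial I$ is immediate, since $\alpha(1)$, and therefore the chosen representative $g_{\alpha(1)}$, is unchanged by any such homotopy. For the homomorphism property, the endpoint of the product path $\alpha\circ(g_{\alpha(1)}\beta)$ equals $g_{\alpha(1)}g_{\beta(1)}x_0$, whose canonical coset representative lies in $g_{\alpha(1)}g_{\beta(1)}\mathbb{G}(X)_{x_0}$, so $\Delta([\alpha]\bullet[\beta])=\Delta([\alpha])\cdot\Delta([\beta])$.

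Next, surjectivity and the kernel computation. Given any $g\mathbb{G}(X)_{x_0}$, path-connectedness of $X$ produces a path $\alpha$ from $x_0$ to $gx_0$, and $\Delta([\alpha])=g\mathbb{G}(X)_{x_0}$, proving surjectivity. For the kernel, $[\alpha]\in\ker\Delta$ forces $g_{\alpha(1)}\in\mathbb{G}(X)_{x_0}$, hence $\alpha(1)=g_{\alpha(1)}x_0=x_0$; so $\alpha$ is already a loop at $x_0$. On such loops $g_{\alpha(1)}=e$ and $\bullet$ reduces to ordinary concatenation, so the inclusion $\pi_1(X,x_0)\hookrightarrow \pi_1^E$ is an injective group homomorphism with image exactly $\ker\Delta$. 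This gives exactness at both $\pi_1(X,x_0)$ and $\pi_1^E$, completing the proof.

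The one technical wrinkle, and the point I would treat most carefully, is that the operation $\bullet$ (and indeed the entire group structure on $\pi_1^E$) depends on the a priori chosen section $x\mapsto g_x$ of the projection $\mathbb{G}(X)\to\mathbb{G}(X)/\mathbb{G}(X)_{x_0}$. When $x_0$ is of free orbit type, $\mathbb{G}(X)_{x_0}=\{e\}$ and the section is unique, so all ambiguity vanishes; this mirrors the running hypothesis used earlier for the orbit-braid setting. Outside the free case one must either assume $\mathbb{G}(X)_{x_0}$ is normal in $\mathbb{G}(X)$ or read (\ref{es}) as an exact sequence of pointed sets in the generalized sense the paper has already invoked after Theorem~\ref{main result}. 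I would pin down this convention at the outset, after which the three steps above carry through without further subtlety.
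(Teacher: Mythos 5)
Your proof is correct and follows essentially the same route as the paper: the paper likewise defines $\Delta([\alpha])=g_{\alpha(1)}\mathbb{G}(X)_{x_0}$, obtains surjectivity from path-connectedness of $X$, and identifies the preimage of the trivial coset with $\pi_1(X,x_0)$. Your added care about the choice of section $x\mapsto g_x$ and about the normality of $\mathbb{G}(X)_{x_0}$ (exactness as group homomorphisms versus as maps) matches the caveat the paper records immediately after the theorem statement.
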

If $\mathbb{G}(X)_{x_0}$ is a normal subgroup of $\mathbb{G}(X)$, then (\ref{es}) is a short exact sequence in the sense of group homomorphisms;
otherwise, it is a short exact sequence in the sense of maps.
\begin{remark}\label{base point}
 We see from Theorem~\ref{efg-main} that up to isomorphism,  $\pi_1^E(X, x_0, \mathbb{G}(X)(x_0))$ does depend upon the choice of the base point $x_0$ since the isotropy subgroups at two different points may not be isomorphic. So the extended fundamental group  is not homotopy invariant.
  However, it is easy to see that $\pi_1^E$ is a functor from the category of topological spaces to the category of groups, so the extended fundamental group is homeomorphism invariant.
\end{remark}
 We can also use $\mathbb{G}(X)(x_0)$ to replace $\mathbb{G}(X)/\mathbb{G}(X)_{x_0}$ in the short exact sequence (\ref{es}) in Theorem~\ref{efg-main}.  Actually, consider the map $\Theta: \pi_1^E(X, x_0, \mathbb{G}(X)(x_0))\longrightarrow \mathbb{G}(X)(x_0)$ defined by $[\alpha]\longmapsto \alpha(1)=g_{\alpha(1)}x_0$. Clearly $\Theta$ is also surjective, and the preimage at $x_0$ is   the fundamental group $\pi_1(X, x_0)$. So we have the following short exact sequence
 $$1\longrightarrow \pi_1(X,x_0)\longrightarrow \pi_1^E(X, x_0, \mathbb{G}(X)(x_0))\longrightarrow \mathbb{G}(X)(x_0)\longrightarrow 1.$$

As  direct consequences of Theorem~\ref{efg-main}, we have that

\begin{corollary}\ \
\begin{itemize}
	\item[$(1)$] If $x_0$ is of free orbit type (i.e., $\mathbb{G}(X)_{x_0}=\{e\}$), then
$$\mathbb{G}(X)\cong \pi_1^E(X, x_0, \mathbb{G}(X)(x_0))/\pi_1(X,x_0),$$
giving a homotopy description of $\mathbb{G}(X)$.
	\item[$(2)$] If $x_0$ is a $\mathbb{G}(X)$-fixed point, then $\pi_1^E(X, x_0, \mathbb{G}(X)(x_0))$ degenerates into $\pi_1(X,x_0)$.
\end{itemize}
\end{corollary}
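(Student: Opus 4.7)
The plan is to read off both statements as direct specializations of the short exact sequence produced in Theorem~\ref{efg-main}, so no new construction is needed beyond verifying the hypotheses in the two extreme cases. In both parts I will first identify what the quotient $\mathbb{G}(X)/\mathbb{G}(X)_{x_0}$ and the orbit $\mathbb{G}(X)(x_0)$ become, and then confirm that the operation $\bullet$ on $\pi_1^E(X, x_0, \mathbb{G}(X)(x_0))$ restricts or projects correctly.

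For part $(1)$, the assumption $\mathbb{G}(X)_{x_0}=\{e\}$ has two consequences. First, $\mathbb{G}(X)/\mathbb{G}(X)_{x_0}=\mathbb{G}(X)$ on the nose, so the right-hand term of the sequence in Theorem~\ref{efg-main} is already the group $\mathbb{G}(X)$. Second, the trivial subgroup is automatically normal, so the sequence in (\ref{es}) is an honest short exact sequence of groups (this addresses the potential normality worry mentioned right after Theorem~\ref{efg-main}). Moreover, since $g_x$ is uniquely determined by $x$ when the isotropy is trivial, the homomorphism $\Delta\colon[\alpha]\mapsto g_{\alpha(1)}$ is well-defined independently of choices. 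Taking the quotient by the kernel $\pi_1(X,x_0)$ then yields the claimed isomorphism
\[
\mathbb{G}(X)\;\cong\;\pi_1^E(X, x_0, \mathbb{G}(X)(x_0))/\pi_1(X,x_0).
\]

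For part $(2)$, the assumption that $x_0$ is $\mathbb{G}(X)$-fixed gives $\mathbb{G}(X)_{x_0}=\mathbb{G}(X)$, hence $\mathbb{G}(X)/\mathbb{G}(X)_{x_0}$ is the trivial group and $\mathbb{G}(X)(x_0)=\{x_0\}$. Any representative $\alpha$ with $\alpha(0)=x_0$ and $\alpha(1)\in\mathbb{G}(X)(x_0)$ is therefore forced to satisfy $\alpha(1)=x_0$, i.e., it is a loop based at $x_0$. This identifies the underlying set of $\pi_1^E(X, x_0, \mathbb{G}(X)(x_0))$ with that of $\pi_1(X,x_0)$. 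What remains is to verify that the two group operations agree: choosing the representative $g_{x_0}=e$ in the coset $\mathbb{G}(X)_{x_0}$ (which is legitimate since the coset is the whole isotropy group), the definition reduces to $[\alpha]\bullet[\beta]=[\alpha\circ(e\cdot\beta)]=[\alpha\circ\beta]$, which is precisely the concatenation defining $\pi_1(X,x_0)$. Hence $\pi_1^E(X, x_0, \mathbb{G}(X)(x_0))=\pi_1(X,x_0)$, consistent with the fact that the cokernel term in (\ref{es}) has collapsed.

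The only subtle point worth flagging—rather than a genuine obstacle—is the dependence of $\pi_1^E$ on the choices of coset representatives $g_x$ highlighted after the definition. In part $(1)$ this ambiguity disappears because the isotropy is trivial; in part $(2)$ it is absorbed because the operation is being compared on loops and the choice $g_{x_0}=e$ makes the formulas match the classical ones. Neither case requires any ingredient beyond Theorem~\ref{efg-main} and the explicit definition of $\bullet$.
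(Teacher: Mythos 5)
Your proposal is correct and follows exactly the route the paper intends: the paper offers no proof beyond calling the corollary a direct consequence of Theorem~\ref{efg-main}, and your specialization of the short exact sequence in the two extreme cases (with the observations that $\Delta$ is choice-independent when the isotropy is trivial, and that $g_{x_0}=e$ makes $\bullet$ reduce to ordinary concatenation when $x_0$ is fixed) supplies precisely the details the authors left implicit.
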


For a subgroup $H$ of $\mathbb{G}(X)$, $X$ is naturally regarded as the space with an effective action of $H$. Then the above procedure can still be carried out to define the extended fundamental group $\pi_1^E(X, x_0, H(x_0))$ of $X$ at $H(x_0)$
and to obtain the following short exact sequence
$$1\longrightarrow\pi_1(X,x_0)\longrightarrow \pi_1^E(X, x_0, H(x_0))\longrightarrow H/H_{x_0}\longrightarrow 1$$
or
$$1\longrightarrow\pi_1(X,x_0)\longrightarrow \pi_1^E(X, x_0, H(x_0))\longrightarrow H(x_0)\longrightarrow 1.$$
On the other hand, since $H(x_0)\subset \mathbb{G}(X)(x_0)$, without loss of generality we may assume that for each
$x\in H(x_0)\subset \mathbb{G}(X)(x_0)$, the choice of $h_x$ in the  expression $x=h_xx_0\in H(x_0)$ coincides with the choice of $g_x$ in the expression $x=g_xx_0\in \mathbb{G}(X)(x_0)$; in other words, $h_x=g_x$ for $x\in H(x_0)$.
 \vskip .2cm
 Thus,  $\pi_1^E(X, x_0, H(x_0))$ is a subgroup of $\pi_1^E(X, x_0, \mathbb{G}(X)(x_0))$.
Then it is easy to see the following result which  is  more general than Theorem~\ref{efg-main}.
\begin{theorem} \label{efg-main2}
The sequence
$$1\longrightarrow \pi_1^E(X,x_0, H(x_0))\longrightarrow \pi_1^E(X, x_0, \mathbb{G}(X)(x_0))\longrightarrow \mathbb{G}(X)(x_0)/H(x_0)\longrightarrow 1$$
is exact.
\end{theorem}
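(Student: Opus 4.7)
The plan is to produce this exact sequence by constructing the two maps explicitly and verifying exactness position by position. Interpret $\mathbb{G}(X)(x_0)/H(x_0)$ as the set of $H$-orbits sitting inside the single $\mathbb{G}(X)$-orbit $\mathbb{G}(X)(x_0)$, with the basepoint given by $H(x_0)$ itself; this is the natural target that mirrors the $\mathbb{G}(X)(x_0)$ appearing in Theorem~\ref{efg-main}. I will then follow the same template used for Theorem~\ref{efg-main}, reading off the sequence from the endpoint map $[\alpha]\longmapsto\alpha(1)$.

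First I would define $\iota:\pi_1^E(X,x_0,H(x_0))\longrightarrow\pi_1^E(X,x_0,\mathbb{G}(X)(x_0))$ to be the inclusion of homotopy classes (relative to $\partial I$) of paths whose endpoints happen to lie in the sub-orbit $H(x_0)\subseteq\mathbb{G}(X)(x_0)$. It is well defined because the equivalence relation (homotopy rel $\partial I$ in $X$) is identical on both sides. The key point to check is that $\iota$ respects the operation $\bullet$: by the standing convention made just before the statement, the representatives satisfy $h_x=g_x$ for every $x\in H(x_0)$, so the translation $g_{\alpha(1)}\beta$ used in the ambient group coincides with the translation $h_{\alpha(1)}\beta$ used inside $\pi_1^E(X,x_0,H(x_0))$ whenever $\alpha(1)\in H(x_0)$. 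Injectivity of $\iota$ is then immediate: any homotopy in $X$ witnessing $\iota[\alpha]=\iota[\beta]$ is itself a homotopy in $\pi_1^E(X,x_0,H(x_0))$.

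Next I would define $q:\pi_1^E(X,x_0,\mathbb{G}(X)(x_0))\longrightarrow\mathbb{G}(X)(x_0)/H(x_0)$ by $q([\alpha])=H\cdot\alpha(1)$, the $H$-orbit through $\alpha(1)$. Well-definedness follows because two $\partial I$-homotopic paths share both endpoints, and surjectivity follows from path-connectedness of $X$ combined with the transitivity of $\mathbb{G}(X)$ on its orbit: every $y=g_yx_0\in\mathbb{G}(X)(x_0)$ is the endpoint of some path starting at $x_0$, so its $H$-orbit is hit. Exactness at the middle is the heart of the argument, and is a direct set-theoretic verification: $q([\alpha])$ equals the basepoint $H(x_0)$ iff $\alpha(1)\in H(x_0)$ iff $[\alpha]$ is (represented by) a path accounted for by $\pi_1^E(X,x_0,H(x_0))$, i.e.\ iff $[\alpha]\in\operatorname{image}(\iota)$.

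The main obstacle, which I would address at the end, is the status of $q$ as a group homomorphism: in general $H$ need not be normal in $\mathbb{G}(X)$, so $\mathbb{G}(X)(x_0)/H(x_0)$ carries no canonical group structure and $q$ is only a map of pointed sets. Accordingly, exactness of the displayed sequence should be understood in the same sense as in the remark after Theorem~\ref{efg-main}, namely in the Switzer sense for topological spaces, just as for the sequence $1\to\mathcal{B}_n(M,G)\to\mathcal{B}_n^{\text{orb}}(M,G)\to G^{\times n}\to 1$ appearing in Theorem~\ref{main result}. When $H$ is normal (so that $H_{x_0}\triangleleft\mathbb{G}(X)_{x_0}$ and the quotient inherits a genuine group structure), the same argument upgrades $q$ to a group homomorphism and yields a short exact sequence of groups; I would record this as a brief corollary.
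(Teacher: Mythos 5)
Your argument is correct and is exactly the route the paper intends: the paper sets up the convention $h_x=g_x$ for $x\in H(x_0)$ precisely so that $\pi_1^E(X,x_0,H(x_0))$ includes as a subgroup, and then leaves the exactness of the endpoint-orbit sequence as ``easy to see,'' which is what you verify position by position. Your closing caveat that $\mathbb{G}(X)(x_0)/H(x_0)$ is in general only a pointed set, so exactness must be read in the Switzer sense unless $H$ is normal, is consistent with the paper's own remarks after Theorem~\ref{efg-main} and Theorem~\ref{main result} and is a worthwhile clarification to record.
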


Finally we end this section with the following properties.
\begin{itemize}
	\item[${\bf (A)}$]
If  $H$  is finite, then  the projection
$p:X\longrightarrow X/H$ induces an epimorphism $$p_*: \pi_1^E(X, x_0, H(x_0))\longrightarrow \pi_1(X/H,  p(x_0)).$$
 \item[${\bf (B)}$] If  $H$  is finite and the action of $H$ on $X$ is free, then the projection
$p:X\longrightarrow X/H$ induces an isomorphism
$$\pi_1^E(X, x_0, H(x_0))\cong \pi_1(X/H, p(x_0)).$$
\item[${\bf (C)}$]
If $x_0$ is of free orbit type under the action of $\mathbb{G}(X)$, then  $X$ gives a direct system
$$\big\{ \pi_1^E(X, x_0, H(x_0)) \big| H\leq \mathbb{G}(X) \big\} $$
of the extend fundamental groups of $X$, such that the limit of this direct system is exactly $\pi_1^E(X, x_0, \mathbb{G}(X)(x_0))$.
\end{itemize}
\appendix
\section{Generalized braid group}\label{A}
Generalized braid groups, with respect to all finite Coxeter groups,  were introduced by Brieskorn \cite{B1} in the 1970's.  They are also called the Artin groups.
\vskip .2cm
 Following the terminology and notation of the paper by Vershinin \cite{VV}, let $V$ be an n-dimensional real vector space and let $W$ be a finite subgroup of $GL(V)$ generated by reflections. Let $\mathcal{M}$ be the set of hyperplanes such that $W$ is generated by the orthogonal reflections in the $M\in \mathcal{M}$. For any $\mathit{w}\in W$ and any $M \in \mathcal{M}$ we assume that $\mathit{w}(M)$ belongs to $\mathcal{M}$. Consider the complexification $V_\mathbb{C}$ of the space $V$ and the complexification $M_\mathbb{C}$ of $M \in \mathcal{M}$. Set $$Y_W=V_\mathbb{C}-\bigcup_{M\in \mathcal{M}}M_\mathbb{C}.$$
  Then $W$ acts freely on $Y_W$, and the orbit space of this action is denoted by $X_W=Y_W/W$. Then the fundamental group $\pi_1 (X_W)$ is called the braid group of action of $W$ on $V$,  denoted by $Br(V,W)$. The  fundamental group $\pi_1 (Y_W)$ is called the pure braid group of action of $W$ on $V$, denoted by $P(V,W)$.
\vskip .2cm
For a finite Coxeter group  $$ W=\langle\mathit{w}_1,\dots,\mathit{w}_k| (\mathit{w}_i \mathit{w}_j)^{\mathit{m}_{i,j}}=e, \mathit{m}_{i,j}=\mathit{m}_{j,i}\rangle,$$
the generalized braid group $Br(W)$ of $W$ is defined as the group with generators ${\mathit{w}_i}$ and relations
$$ \text{prod}(\mathit{m}_{i,j}; \mathit{w}_i, \mathit{w}_j)=\text{prod}(\mathit{m}_{j, i}; \mathit{w}_j, \mathit{w}_i)$$
where the symbol $\text{prod}(m;x,y)$ stands for the product $xyxy\cdots$ with m factors. By adding the relation $\mathit{w}_i^2=e$ to the above presentation we obtain a presentation of $W$. The following  theorem is due to Brieskorn~\cite{B2} and Deligne~\cite{D}.

\begin{theorem}[\cite{B2, D}]\ \
\begin{itemize}
	\item[$(1)$] The fundamental group $\pi_1(X_W)$ is isomorphic to the generalized braid group $Br(W)$.
	\item[$(2)$] The universal covering of $X_W$ is contractible, and hence $X_W$ is a space of $K(\pi; 1)$.
\end{itemize}
\end{theorem}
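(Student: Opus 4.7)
The plan is to prove (1) by producing an explicit isomorphism $Br(W) \to \pi_1(X_W)$ via geometric generators tied to the walls of a fundamental chamber, and to prove (2) by constructing a finite CAT(0) model for the universal cover of $X_W$, following the Salvetti--Deligne strategy.

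For part (1), I would first fix a chamber $C$ of the real arrangement in $V$ (a connected component of $V \setminus \bigcup_{M\in\mathcal{M}} M$), a base point $y_0 \in C$, and its image $x_0 \in X_W$. The simple reflections $w_1,\dots,w_k$ of $W$ are in bijection with the walls $M_1,\dots,M_k$ of $C$. For each $i$ I would define a generator $\sigma_i \in \pi_1(X_W,x_0)$ as the image of a path $\gamma_i$ in $Y_W$ from $y_0$ to $w_i(y_0)$ that makes a small semicircular detour through $V_\mathbb{C}$ around the complex hyperplane $(M_i)_\mathbb{C}$; since $w_i \in W$, this closes to a loop in $X_W$. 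The free $W$-action on $Y_W$ yields the short exact sequence
$$1 \longrightarrow \pi_1(Y_W) \longrightarrow \pi_1(X_W) \longrightarrow W \longrightarrow 1,$$
under which $\sigma_i \mapsto w_i$. Surjectivity of the induced map $\Phi: Br(W) \to \pi_1(X_W)$ would follow from a Zariski--Van Kampen argument applied to the stratification of $V_\mathbb{C}$: meridians around the irreducible components of the discriminant $\bigcup_M M_\mathbb{C}/W$ generate $\pi_1(X_W)$, and each such meridian is conjugate to some $\sigma_i$.

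The decisive step is checking the braid relation $\text{prod}(m_{ij};\sigma_i,\sigma_j) = \text{prod}(m_{ji};\sigma_j,\sigma_i)$ and ruling out further relations. I would verify the braid relation by localizing near a generic point of the codimension-two flat $M_i \cap M_j$: the transverse arrangement is the rank-two dihedral $I_2(m_{ij})$ arrangement, and the fundamental group of the local quotient is the corresponding rank-two Artin group, which is a direct computation in $\mathbb{C}^2$ minus $m_{ij}$ concurrent complex lines modulo the dihedral action. To see these are all the relations, I would apply Van Kampen to a CW decomposition of $X_W$ adapted to the chamber structure: the $1$-cells correspond to the walls (giving generators $\sigma_i$), the $2$-cells to codimension-two flats (giving braid relations), and higher-dimensional cells contribute no new relations.

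For part (2), the main obstacle is asphericity of $X_W$. My plan is to build the Salvetti complex $\mathrm{Sal}(W)$, a finite $W$-equivariant regular CW complex whose cells are indexed by pairs $(w,J)$ with $w \in W$ and $J \subset \{1,\dots,k\}$ of spherical type (i.e.\ $W_J$ finite), and to construct a $W$-equivariant deformation retraction $Y_W \to \mathrm{Sal}(W)$ obtained by flowing along the real parts of chambers. This reduces asphericity of $X_W$ to contractibility of the universal cover of $\mathrm{Sal}(W)/W$. The hard part is verifying the Moussong link condition (flagness of vertex links) on the universal cover; following Deligne's original argument, this reduces to the combinatorial fact that the positive braid monoid $Br(W)^+$ embeds into $Br(W)$ and the divisibility poset under the Garside element $\Delta$ is a lattice, which can be checked using the exchange condition in $W$. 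With nonpositive curvature established, the Cartan--Hadamard theorem yields contractibility of the universal cover, proving that $X_W$ is a $K(\pi,1)$.
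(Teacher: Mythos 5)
First, a contextual point: the paper offers no proof of this theorem at all --- it appears in Appendix~\ref{A} purely as a quoted classical result, with the proofs delegated to Brieskorn~\cite{B2} and Deligne~\cite{D}. So there is no ``paper's own proof'' to compare against; what can be judged is whether your sketch would actually establish the result. Your outline of part $(1)$ is essentially Brieskorn's: generators $\sigma_i$ from half-loops around the complexified walls of a chamber, surjectivity onto $W$ via the covering sequence $1\to\pi_1(Y_W)\to\pi_1(X_W)\to W\to 1$, braid relations verified by localizing at a generic point of a codimension-two flat where the arrangement is dihedral, and completeness of the relations by a Van Kampen argument on a chamber-adapted cell structure. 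That is a correct and standard route to $(1)$.

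Part $(2)$, however, contains a genuine gap. The strategy you describe --- Salvetti complex, Moussong link condition, CAT$(0)$, Cartan--Hadamard --- is not Deligne's argument (the Salvetti complex dates from 1987 and Moussong's condition from 1988, both well after Deligne's 1972 paper), and more importantly it is not known to work for all finite Coxeter groups. The CAT$(0)$-ness of the Deligne complex with the Moussong metric for spherical-type Artin groups is a conjecture of Charney and Davis, verified only for FC-type and two-dimensional Artin groups and a handful of low-rank cases; for a general finite $W$ (e.g.\ large $A_n$, $E_8$) the link condition you propose to ``verify'' is precisely the open part, so your proof would stall there. The claim that the link condition ``reduces to'' the embedding of the positive monoid and the lattice property of divisibility below the Garside element $\Delta$ conflates two different mechanisms: the lattice property is indeed the engine of Deligne's proof, but Deligne uses it combinatorially --- he builds the universal cover of a complex homotopy equivalent to $X_W$ as a directed union of contractible pieces indexed by positive galleries and deduces contractibility from the directedness supplied by the lattice/normal-form structure, with no curvature input whatsoever. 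To make part $(2)$ correct you should either follow that combinatorial contractibility argument, or explicitly restrict the curvature approach to the types where it is known to apply. (A minor further slip: the universal cover of $X_W$ is not finite; you presumably mean a finite CW model of $X_W$ whose universal cover you then analyze.)
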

This theorem means that $X_W$ is the classifying space of  the generalized braid group $Br(W)$.
In addition, it is easy to see that $Y_W$ is also a space of $K(\pi; 1)$, so $Y_W$ is the classifying space of the generalized pure braid group $P(W)$ of $W$.


\end{document}